\def\eq#1{(\ref{#1})}
\def\nn{\nonumber}
\def\({\left(\begin{array}{cccccc}}
\def\){\end{array}\right)}
\def\eq#1{(\ref{#1})}
\def\nn{\nonumber}
\def\({\left(\begin{array}{cccccc}}
\def\){\end{array}\right)}
\def\bes{\begin{eqnarray}}
\def\ees{\end{eqnarray}}
\newcommand{\jdown}{\overset{\,\,>}{J}}
\newcommand{\jup}{\overset{\,\,<}{J}}
\newcommand{\ba}{\overset{\raisebox{0pt}[0pt][0pt]{\text{\raisebox{-.5ex}{\scriptsize$\leftharpoonup$}}}}}
\newcommand{\fa}{\overset{\raisebox{0pt}[0pt][0pt]{\text{\raisebox{-.5ex}{\scriptsize$\rightharpoonup$}}}}}
\newcommand{\rhs}{\mathrm{RHS}\, \eq}
\newcommand{\lhs}{\mathrm{LHS}\, \eq}
\newcommand{\del}{\partial}
\newcommand{\beq}{\begin{equation}}
\newcommand{\eeq}{\end{equation}}
\newcommand{\bea}{\begin{eqnarray}}
\newcommand{\eea}{\end{eqnarray}}
\newcommand{\beann}{\begin{eqnarray*}}
\newcommand{\eeann}{\end{eqnarray*}}
\newcommand{\RR}{\mathbb{R}}
\newcommand{\bp}{\begin{proof}}
\newcommand{\ep}{\end{proof}}
\newcommand{\nquad}{\negthickspace\negthickspace
\negthickspace\negthickspace}
\newcommand{\nqquad}{\nquad\nquad}
\newcommand{\cB}{{\mathcal B}}
\newcommand{\cG}{{\mathcal G}}
\newcommand{\cH}{{\mathcal H}}
\newcommand{\cK}{{\mathcal K}}
\newcommand{\ty}{\tilde{y}}
\newcommand{\tx}{\tilde{x}}
\newtheorem{theorem}{Theorem}[section]
\newtheorem{proposition}[theorem]{Proposition}
\newtheorem{lemma}[theorem]{Lemma}
\newtheorem{remark}[theorem]{Remark}
\numberwithin{equation}{section}
\begin{document}

\title{Pairwise wave interactions in ideal polytropic gases}

\author{Geng Chen}
\address{G.~Chen, Department of Mathematics, Penn State University, University Park, 
State College, PA 16802, USA ({\tt chen@math.psu.edu}).}
\author{Erik E. Endres}
\address{E.~E.~Endres, ({\tt eeendres${}_-$usa@yahoo.com})}
\author{Helge Kristian Jenssen}
\address{ H.~K.~Jenssen, Department of Mathematics, Penn State University, University Park, 
State College, PA 16802, USA ({\tt jenssen@math.psu.edu}).}
\thanks{H.~K.~Jenssen was partially supported by NSF grants DMS-0539549 (CAREER) and DMS-1009002.}

\date{\today}
\begin{abstract}
	We consider the problem of resolving all pairwise interactions of shock 
	waves, contact waves, and rarefaction waves in 1-dimensional 
	flow of an ideal polytropic gas.
	Resolving an interaction means here to determine the types of the three 
	outgoing (backward, contact, and forward) waves in the 
	Riemann problem defined by the extreme left and right states of the two incoming 
	waves, together with possible vacuum formation. This problem has been 
	considered by several authors and turns out to be
	surprisingly involved. For each type of interaction (head-on, involving a contact, 
	or overtaking) the outcome depends on the strengths of the incoming waves. 
	In the case of overtaking waves the type of the reflected wave also depends on the 
	value of the adiabatic constant. Our analysis provides a complete breakdown and 
	gives the exact outcome of each interaction.
\end{abstract}

\maketitle

Key words: wave interactions, vacuum, one-dimensional, compressible gas dynamics, ideal gas.

MSC 2010: 76N15, 35L65, 35L67
\tableofcontents

\section{Introduction}
We consider the problem of resolving all interactions of pairs of elementary waves in 
one-dimensional compressible, adiabatic flow of an ideal and polytropic gas as described by the 
Euler system \eq{continuity}, \eq{momentum}, \eq{tot_energy}. The elementary 
waves are shocks, contacts, and centered rarefactions. 
The interactions are treated as in the Glimm scheme \cite{gl}: if the incoming waves connect 
a left state $U_l$, via a middle state $U_m$, to right state $U_r$, then the ``interaction problem" 
is to determine the type and strength of the resulting outgoing waves obtained by resolving the 
Riemann problem $(U_l,U_r)$. Here $U$ denotes any triple needed to specify the state the gas; we 
will mostly work with specific volume $\tau=1/\rho$ ($\rho=$ density), fluid velocity $u$, and pressure $p$.  

For interactions that involve only shocks and/or contacts the solution of the Riemann problem 
$(U_l,U_r)$  provides the exact solution of the wave interaction. When one of the waves is a rarefaction 
the actual interaction involves ``penetration" of a rarefaction wave by a shock, a contact or by another 
rarefaction. In these cases the exact solution is more involved.  We do not treat the problem of 
penetration and consider only the Riemann problem defined by the extreme 
states $U_l$ and $U_r$.

\begin{remark}
In cases of wave penetration the extreme Riemann problem $(U_l,U_r)$ may not provide an accurate 
description of the asymptotic behavior of the wave interaction. E.g., in an overtaking interaction 
of a shock and a rarefaction, the shock may not pass through the rarefaction (incomplete penetration), 
and in this case the solution of the Riemann problem $(U_l,U_r)$ presumably does not describe the 
asymptotic behavior in the exact solution. See Part D of Chapter III in \cite{coufr}, Section 3.5.2 in 
\cite{ch}, and \cite{gr} for further details and references.
\end{remark}

Many authors have studied the problem of pairwise interactions. Since it gives rise to 
Riemann problems it may be said to originate with Riemann's and Hugoniot's pioneering works 
\cites{ri,jc,had} on gas dynamics. According to \cite{coufr}, Jouguet \cite{jou} in his work on combustion 
considered interaction phenomena. A systematic approach was initiated by von Neumann 
\cite{vneu} in his work on hydrodynamics. Among other issues he considered the interaction of two 
shock waves, and called attention to the fact that a contact wave emerges in such interactions. 
He also observed that in interactions of overtaking shocks in an ideal gas, the reflected wave 
is necessarily a rarefaction whenever the adiabatic constant $\gamma$ satisfies 
$\gamma\leq \frac{5}{3}$. Shock-shock and shock-contact interactions 
were analyzed in detail by Ro{\v{z}}destvenski{\u\i} \& Janenko \cites{rj}; see also Smoller \cite{smol}. 
A comprehensive treatment is presented by Courant and Friedrichs \cite{coufr} 
who also list results for shock-contact and rarefaction-contact interactions. A partial treatment is 
also given in \cites{rj,ll}. 
The most complete work on pairwise interactions in ideal polytropic gases to date appears to be the 
analysis by Chang and Hsiao \cite{ch}. These authors considered all the ten essentially different 
types of pairwise interactions (see \eq{gr1}-\eq{gr3}), for all values of the adiabatic constant $\gamma>1$. 
In particular they list the possible outcomes of all ten interactions. However their analysis does 
not completely delimit all the various outcomes. 

In revisiting the interaction problem our objectives are to provide a complete breakdown 
of all possible cases, and to determine the {\em exact} outcome of each interaction. 
In addition we specify precisely when a vacuum state appears among the outgoing waves. 
Our analysis is motivated by a desire to investigate relevant measures for wave-strengths 
in large data solutions of the Euler system. The latter issue will be pursued elsewhere.

Let us explain what we mean by ``complete breakdown of all cases." This requires a little
background. We work in a Lagrangian frame such that contact waves appear stationary, 
and the primary unknowns are specific volume $\tau$, particle velocity $u$,
and pressure $p$ (alternatively, specific entropy $S$).
We parametrize shocks and rarefactions by pressure ratios across 
the wave, while contacts are parametrized by specific volume ratios. With a slight abuse of terminology 
``a shock $f$," say, refers to a shock wave connecting two states $U_-$ (to the left of the shock) and 
$U_+$ (to the right of the shock), and such that the pressures satisfy $\frac{p(U_+)}{p(U_-)}=f$.  
We also refer to $f$ as the {\em strength} of the wave.
A wave changes type (shock $\leftrightarrow$ rarefaction, or up-contact $\leftrightarrow$ down-contact) 
as this pressure (or specific volume) ratio crosses the value $1$. 

Now, an interaction of two incoming waves $x$ and $y$ generically gives 
rise to three outgoing waves: a backward wave $B$, a contact $C$, and a forward wave $F$, and, possibly,
a vacuum state. The interaction problem amounts to determining whether a vacuum occurs, together
with the values of $B$, $C$, $F$, in terms of $x$, $y$. 

It does not seem possible to determine 
the functions $B(x,y)$, $C(x,y)$, and $F(x,y)$ explicitly in all cases. 
A somewhat weaker request is to ask for the ``transitional curves" in the plane of incoming 
strengths. That is, to determine the curves in the $(x,y)$-plane which delimit the regions 
$\{(x,y)\,|\,B(x,y)\gtrless 1\}$, $\{(x,y)\,|\,C(x,y)\gtrless 1\}$, $\{(x,y)\,|\,F(x,y)\gtrless 1\}$, 
as well as the vacuums regions. (It turns out that these transitional sets are either 
$C^2$-curves, or finite unions of such.) It is possible to find explicit equations for these curves and to determine 
their relative locations and intersections, and thereby determine all possible outcomes in all pairwise interactions. 

In this work we carry out the detailed computations that yield a complete breakdown in this sense. In doing 
so we improve on the existing results in the literature. E.g., one of the more involved cases occurs when a 
backward shock is overtaken by a backward rarefaction ($\ba S\ba R$) in a gas with adiabatic exponent $\gamma<\frac{5}{3}$. 
Depending on the strengths $x$, $y$ of the incoming waves, the interaction 
may produce any one of four outgoing wave configurations:
\[\ba S\jup\fa S,\quad \ba R\jup\fa S,\quad \ba S\jup\fa R,\quad\text{or}\quad \ba R\jup\fa R,\]
with possible vacuum formation in the latter case, see Figure 5 (left diagram). 
Here $R$, $J$, and $S$ denote rarefactions, contacts, and shocks, respectively; an arrow indicates the 
direction the wave is moving relative to the Lagrangian frame, and $\gtrless$ indicate increase/decrease 
of density across a contact. In our analysis we delimit the various cases in terms of 
certain curves in the $(x,y)$-plane. These curves are given in terms of explicit equations - in some cases
as explicit graphs $y(x)$ or $x(y)$.

The calculations are surprisingly involved, with a number of 
sub-cases for various combinations of strengths $x$, $y$ and values of $\gamma$. It is possible that 
there are more efficient ways of parametrizing waves than the present choice of pressure and specific 
volume ratios. In fact, for some cases we find it useful to consider the entropy variable as well. 
However, we have not been able to determine more efficient parameters that would, e.g., provide 
explicit formulas for outgoing strengths in terms of incoming strengths.

The rest of the article is organized as follows.  In Section \ref{bckgr} we recall the Euler system and 
two parametrizations of its wave curves, one in $(\tau,u,p)$-space and the other in $(\tau,u,S)$-space. 
In Section \ref{rip_vac} we briefly review the solution of
the Riemann problem and characterize the occurrence of a vacuum. We also list and group the 
ten essentially different pairwise interactions, modulo reflections of the spatial variable $x\leftrightarrow -x$. 
We analyze head-on (Group I) interactions in Section \ref{Gr_I},
and interactions with a contact (Group II) in Section \ref{Gr_II}. The results for 
these groups are summed up in Theorems \ref{sum_grI} and \ref{sum_grII}.
All of the remaining analysis deals with the more involved cases where a shock 
or rarefaction overtakes another shock or rarefaction (Group III). 
The results for overtaking interactions are given in Theorem \ref{sum_grIII}. It turns out that the outcome depends 
not only on the incoming wave-strengths but also on the adiabatic constant $\gamma$. 
Considering (for concreteness) the case of overtaking backward waves we divide the analysis into three sub-groups: 
$\ba S\ba S$ (IIIa), $\ba S \ba R$ (IIIb), and $\ba R\ba S$ (IIIc) interactions. 
As observed by Chang and Hsiao \cite{ch} there are two values ($\gamma=\frac{5}{3}$ 
and $\gamma=2$) at which the global structure of the transitional curve for the reflected wave
(i.e., the curve $\{(x,y)\,|\,F(x,y)= 1\}$) changes character. The type of the transmitted 
(backward) wave for Group III interactions is analyzed in Section \ref{Gr_III_i}. 
In Section \ref{group3_reflected} we treat the reflected wave; this is where 
the most involved analysis occurs, due to the different behaviors for $\gamma\gtrless\frac{5}{3}$ 
and $\gamma\gtrless 2$. To analyze the outgoing contact it turns out to be convenient to use 
$(\tau,u,S)$ variables; the details are given in Section \ref{group3_contact}. Finally, in Section 
\ref{group3_vac} we analyze vacuum formation in overtaking interactions.  
Section \ref{aux} collects various auxiliary results needed in the analysis.

\section{The Euler system, wave curves, Riemann problems, and vacuum}\label{bckgr}
\subsection{The 1-d compressible Euler system for an ideal gas}
We consider 1-d compressible flow of an ideal, polytropic gas. The conservation laws 
for mass, momentum, and energy are given by
\begin{eqnarray}
	\tau_t - u_x &=&0 \label{continuity}\\
	u_t + p_x &=&0 \label{momentum}\\
	E_t + (up)_x &=& 0\, , \label{tot_energy}
\end{eqnarray}
where $x$ denotes\footnote{$x$ and $t$ will later be used for other parameters 
as well. The context will make it clear which use is intended.} a Lagrangian (mass) coordinate,
$\tau=1/\rho$ is specific volume, $u$ is the fluid velocity, $p$ is 
pressure, and $E=e+\frac{u^2}{2}$ is specific total energy. Here $e$ denotes specific internal 
energy, which for an ideal gas is given by
\beq\label{ip_gas}
	e(\tau,p)=\frac{p\tau}{\gamma-1}\,,\qquad \text{where $\gamma>1$ is the adiabatic constant.}
\eeq
For later reference we note that the local sound speed $\mathfrak c$ is given by 
\beq\label{soundspeed}
	\mathfrak c^2=\gamma \tau p\,.
\eeq

\subsection{Wave curves}\label{curves}
Each fixed state $(\bar \tau,\bar u,\bar p)\in\RR^+\times\RR\times\RR^+$ has three associated 
{\it wave curves} consisting of those states that can be connected to $(\bar \tau,\bar u,\bar p)$ 
(on its right) by a single backward wave, a single contact, or a single forward wave, respectively. 
The backward and forward waves are either rarefactions or entropy admissible (i.e.\ compressive) 
shocks. We use pressure ratios $p_{right}/p_{left}$ to parametrize the wave curves in 
the extremal fields, and specific volume ratios $\tau_{right}/\tau_{left}$ for the contact field. 
We let $b$, $c$, $f$ denote the pressure ratio (strength) of a backward, contact, and 
forward wave, respectively. Using the expressions from \cite{smol} (p.\ 354) the wave curves 
are given as follows:
\beq\label{bkwd_wave}
	{\ba{W}}(b;\bar \tau,\bar u,\bar p) = \left(\begin{array}{c}
		{\ba{\phi}}(b)\bar \tau \\
		\bar u -{\ba{\psi}}(b)\sqrt{\bar \tau\bar p}\\
		b\bar p
	\end{array}\right)
	\qquad\left\{\begin{array}{ll}
	\ba R:\quad 0<b<1 \\
	\ba S:\quad b>1
	\end{array}\right.
\eeq
\beq\label{contactwave}
	{\overset{\lessgtr}{J}}(c;\bar \tau,\bar u,\bar p) = \left(\begin{array}{c}
		c\bar \tau \\
		\bar u \\
		\bar p
	\end{array}\right)\qquad 1\lessgtr c \quad(c>0)
\eeq
\beq\label{frwd_wave}
	{\fa{W}}(f;\bar \tau,\bar u,\bar p) = \left(\begin{array}{c}
		{\fa{\phi}}(f)\bar \tau \\
		\bar u +{\fa{\psi}}(f)\sqrt{\bar \tau\bar p}\\
		f\bar p
	\end{array}\right)
	\qquad\left\{\begin{array}{ll}
	\fa R:\quad f>1 \\
	\fa S:\quad 0<f<1\,.
	\end{array}\right.
\eeq
The auxiliary functions $\ba\phi$, $\fa\phi$, $\ba\psi$, and $\fa\psi$ are given by
\[{\ba{\phi}}(b) = \left\{\begin{array}{ll}
	b^{-1/\gamma} \quad & 0<b<1\\\\
	\frac{1+a b}{b+a}\quad & b>1
\end{array}\right\}\qquad
{\ba{\psi}}(b)= \left\{\begin{array}{ll}
	\nu\big(b^\zeta-1\big)\quad & 0<b<1\\\\
	\frac{\kappa(b-1)}{\sqrt{b+a}}\quad & b>1
\end{array}\right\}\] 
\[{\fa{\phi}}(f) = \left\{\begin{array}{ll}
	f^{-1/\gamma} \quad & f>1\\\\
	\frac{1+a f}{f+a}\quad & 0<f<1
\end{array}\right\}\qquad
{\fa{\psi}}(f)= \left\{\begin{array}{ll}
	\nu\big(f^\zeta-1\big)\quad & f>1\\\\
	\frac{\kappa(f-1)}{\sqrt{f+a}}\quad & 0<f<1
\end{array}\right\}\,,\] 
where the constants $\kappa$, $\nu$, $\zeta$ are given in terms of the parameter\footnote{For 
brevity we use $a$ instead of the more common notation $\mu^2$.}
\beq\label{a}
	a:= \frac{\gamma -1}{\gamma +1}\in\, (0,1)
\eeq
as follows:
\[\kappa = \sqrt{1-a}\,\in\, (0,1)\,, 
\qquad \nu=\frac{\sqrt{1-a^2}}{a}\,\in\, (0,+\infty)\,,
\qquad \zeta=\frac{a}{1+a}\in(0,\textstyle\frac{1}{2})\,.\]
We note that $a$ is a strictly increasing function of $\gamma$, and that 
\[a= \textstyle\frac{1}{4}\,\,\Leftrightarrow\,\,  \gamma= \frac{5}{3},\qquad \text{and}
\qquad a= \frac{1}{3}\,\,\Leftrightarrow\,\, \gamma= 2\,.\]
The auxiliary functions $\fa \phi$, $\ba \phi$, $\fa\psi$, and $\ba\psi$ are analyzed in 
Section \ref{aux}.

It will be convenient to perform some of the calculations in $(\tau,u,S)$-variables
(see Section \ref{group3_contact}), where $S=$ specific entropy, and we proceed to record 
the expressions for the wave curves in these variables. The pressure is now given by 
\beq\label{p_tau_u_S}
	p=\hat p(\tau,S)=K\tau^{-\gamma}\exp\left(\textstyle\frac{S}{c_v}\right)\quad \qquad\text{($K$, $c_v$ positive constants).}
\eeq
We parametrize all wave curves in $(\tau,u,S)$-space by specific volume ratios 
$\tau_{right}/\tau_{left}$. We use $l$ for backward waves, $c$ for contacts (as above), and $\iota$ 
for forward waves. From the parametrizations in $(\tau,u,p)$-space, together with \eq{p_tau_u_S}, 
we obtain the following expressions for the wave curves $(\tau,u,S)$-variables:
\beq\label{bkwd_wave_tau_u_s}
    {\ba{W}}(l;\bar \tau,\bar u,\bar S)
        = \left(\begin{array}{c}
        l\bar \tau \\
        \bar u -{\ba{\xi}}(l)\sqrt{\bar \tau\bar p}\\
        \bar S+c_v{\ba{\eta}}(l)
    \end{array}\right)
    \qquad\left\{\begin{array}{ll}
    \ba R:\quad l>1  \\
    \ba S:\quad a<l<1
    \end{array}\right.
\eeq 
\beq\label{contactwave_tau_u_s}
    {\overset{\lessgtr}{J}}(c;\bar \tau,\bar u,\bar S) 
    = \left(\begin{array}{c}
        c\bar \tau \\
        \bar u \\
        \bar S+c_v\gamma\log c
    \end{array}\right)\qquad 1\lessgtr c \quad (c>0)
\eeq 
\beq\label{frwd_wave_tau_u_s}
    {\fa{W}}(\iota;\bar \tau,\bar u,\bar S) 
    =\left(\begin{array}{c}
        \iota\bar \tau \\
        \bar u +{\fa{\xi}}(\iota)\sqrt{\bar \tau\bar p}\\
        \bar S+c_v{\fa{\eta}}(\iota)
    \end{array}\right)
    \qquad\left\{\begin{array}{ll}
    \fa R:\quad 0<\iota<1 \\
    \fa S:\quad 1<\iota<\frac{1}{a}\,.
    \end{array}\right.
\eeq 
The auxiliary function $\ba\xi$, $\fa\xi$, $\ba\eta$, $\fa\eta$ are given by
\[{\ba{\xi}}(l) =
    \left\{\begin{array}{ll}
    \nu \big(l^\frac{1-\gamma}{2}-1\big) \quad & l>1\\\\
    \sqrt{a +1}\frac{(1-l)}{\sqrt{l-a}}\quad & a <l<1
\end{array}\right\}\qquad
{\ba{\eta}(l)}
    = \left\{\begin{array}{ll}
    0 \quad & l>1\\\\
    \log\Big(\textstyle\frac{l^\gamma (1-al)}{l-a}\Big)\quad & a <l<1
\end{array}\right\}\]
\[{\fa{\xi}}(\iota) =
    \left\{\begin{array}{ll}
    \nu \big(\iota^\frac{1-\gamma}{2} -1\big)\quad & 0<\iota<1\\\\
    \sqrt{a +1}\frac{(1- \iota)}{\sqrt{\iota-a}}\quad & 1<\iota<\frac{1}{a}
\end{array}\right\}\qquad
{\fa{\eta}}(\iota)=
    \left\{\begin{array}{ll}
    0\quad & 0< \iota<1\\\\
    \log\Big(\textstyle\frac{\iota^\gamma (1-a \iota)}{\iota-a}\Big)\quad & 1<\iota<\frac{1}{a}
\end{array}\right\}.\]

\begin{remark}\label{entr_jmp}
	As is clear from the parametrizations \eq{bkwd_wave_tau_u_s}-\eq{frwd_wave_tau_u_s} 
	the entropy remains constant across rarefactions, while it necessarily jumps across
	contacts.
\end{remark}

\medskip

\subsection{Riemann problems and vacuum formation}\label{rip_vac}
A {\it Riemann problem} refers to the problem of resolving a single, initial jump 
discontinuity connecting two constant states. It is known that the Euler system 
for an ideal, polytropic gas admits a self-similar solution to any Riemann 
problem. Futhermore, this solution is unique within the class of self-similar solutions 
consisting of constant states connected by compressible shocks, contacts, and 
centered rarefactions, possibly including a vacuum state; see \cite{coufr}.

Consider the Riemann problem for \eq{continuity}-\eq{tot_energy} 
with left state $(\bar \tau,\bar u,\bar p)$ and right 
state $(\tau,u,p)$. We use capital letters $B$, $C$, and $F$ to denote the resulting 
outgoing backward, contact and forward waves, respectively. 
Traversing the resulting wave-fan from left to right and using the
expressions \eq{bkwd_wave}-\eq{frwd_wave}, yield
\beq\label{BCF}
	\tau = C {\ba{\phi}}(B) {\fa{\phi}}(F) \bar \tau\,,\qquad 
	\frac{u-\bar u}{\sqrt{\bar \tau \bar p}} = {\fa{\psi}}(F)
	\sqrt{C\, B\, {\ba{\phi}}(B)} - {\ba{\psi}}(B)\,,\qquad 
	p = BF\bar p\,.
\eeq
By using the first and last equations in the middle equation, together with the relation \eq{rel3},
we obtain the following equation for $B$:
\beq\label{Riem}
	\frac{\bar u-u}{\sqrt{\bar \tau \bar p}}={\ba{\psi}}(B) + \sqrt{\frac{\tau p}{\bar \tau \bar p}}\,\,
	{\ba{\psi}}\left(\frac{\bar p B}{p}\right)=:\mathcal F(B;\tau,p,\bar\tau,\bar p)=\mathcal F(B) \,.
\eeq
To solve the Riemann problem $\big((\bar \tau,\bar u,\bar p),(\tau,u,p)\big)$
amounts to determining the pressure ratio $B$ of the outgoing backward wave
from $\mathcal F(B)=\frac{\bar u-u}{\sqrt{\bar \tau \bar p}}$. 
>From this one then determines $C$ and $F$ from \eq{BCF}. 

The function ${\ba{\psi}}$ is strictly increasing and tends to $+\infty$ at $+\infty$ (see Section \ref{aux}).
It follows that the map $B\mapsto \mathcal F(B;\tau,p,\bar\tau,\bar p)$ 
has the same properties, and that the Riemann problem has a unique solution without vacuum
if and only if $(\bar \tau,\bar u,\bar p)$ and $(\tau,u,p)$ are such that 
\[\mathcal F(0;\tau,p,\bar\tau,\bar p)<\frac{\bar u-u}{\sqrt{\bar \tau \bar p}}\,.\]
This is the case if and only if
\beq\label{vac_cond}
	u-\bar u < \frac{2}{\gamma-1}(\mathfrak c+\bar{\mathfrak c})\qquad\quad\mbox{(no vaccum),}
\eeq
where the sound speeds $\mathfrak c$ and $\bar{\mathfrak c}$ are given by \eq{soundspeed}.
If the condition \eq{vac_cond} is not satisfied,
then we agree to solve the Riemann problem as follows: a backward rarefaction 
connects $(\bar\tau,\bar u,\bar p)$ to $(\infty,0,0)$, followed by a forward 
rarefaction connecting $(\infty,0,0)$ to $(\tau,u,p)$. In the $(x,t)$-plane the 
two rarefactions are separated by a vertical line along which $\tau=\infty$. 
The two rarefactions, together with the line $x=0$, span the fan (see \cite{smol})
\[-\sqrt{\frac{\gamma \bar p}{\bar \tau}}\,<\,\frac{x}{t}\,<\,\sqrt{\frac{\gamma p}{\tau}}\,.\] 

\begin{remark}\label{entr_jump}
	We note that the rarefactions on either side of a vacuum may be in different entropy states. 
	In particular this can occur in vacuums resulting from interactions: see region $\{c<1<f\}$ in 
	Figure 3, and region $\{y<1<x\}$ in Figures 4-6.	  
	Recalling Remark \ref{entr_jmp} we denote the outcome of such interactions by 
	$\ba R{\overset{\lessgtr}{J}}\fa R$ according to whether the left and right entropy states 
	satisfy $S_l\lessgtr S_r$. Thus, for these outcomes, ${\overset{\lessgtr}{J}}$ indicates the direction 
	of the {\em entropy} jump across the vacuum.
\end{remark}

\section{Pairwise interactions and summary of results}\label{intrcns_results}
\subsection{Pairwise interactions}
We shall consider all possible wave configurations where two elementary waves
(compressible shocks, contacts, and centered rarefactions) interact. We collect 
the interactions into three groups: head-on (I), involving a contact (II), and
overtaking (III), see Figure 1. In interactions, lower case letters 
are used for the incoming waves ($f$ for forward waves, $c$ for contacts, and $b$ for 
backward waves), while capital letters $B$, $C$, $F$, denote outgoing waves. 
In Figure 1 the waves are depicted schematically as if no penetration occurs: 
all waves, including possible incoming and outgoing rarefaction fans, are drawn as 
single lines in the $(x,t)$-plane. As described in the introduction the interaction problem 
is to resolve the particular Riemann problem defined by the extreme left and right
states in the incoming waves.

Due to the symmetry $x\leftrightarrow -x$ there are ten essentially different pairwise 
interactions:
\beq\label{gr1}
	\left.
	\begin{array}{ll}
		\mbox{Ia}: & {\fa{S}}{\ba{S}}\\
		\mbox{Ib}: & {\fa{S}}{\ba{R}}\\
		\mbox{Ic}: & {\fa{R}}{\ba{R}}
	\end{array}	
	\qquad\right\}\qquad \mbox{head-on interactions}
\eeq
\beq\label{gr2}
	\left.
	\qquad\,\,\begin{array}{ll}
		\mbox{IIa}: & {\fa{S}}\jdown\\
		\mbox{IIb}: & {\fa{S}}\jup\\
		\mbox{IIc}: & {\fa{R}}\jdown\\
		\mbox{IId}: & {\fa{R}}\jup
	\end{array}	
	\qquad\right\}\qquad \mbox{interactions with a contact}
\eeq

\beq\label{gr3}
	\left.
	\,\,\,\begin{array}{ll}
		\mbox{IIIa}: & {\ba{S}}{\ba{S}}\\
		\mbox{IIIb}: & {\ba{S}}{\ba{R}}\\
		\mbox{IIIc}: & {\ba{R}}{\ba{S}}
	\end{array}	
	\qquad\right\}\qquad \mbox{overtaking interactions.}
\eeq
Each interaction defines a particular Riemann problem, the {\em interaction Riemann problem}, 
for which we obtain a 
non-linear algebraic equation determining the strength of the outgoing backward 
wave, say, as detailed above for a general Riemann problem.
When expressed in terms of the auxiliary functions $\ba\phi$, $\fa\phi$, $\ba\psi$, 
$\fa\psi$, this nonlinear equation is the same within each of the groups I - III. However, 
a breakdown into individual cases is necessary to determine exactly the outcomes 
of each type of interaction.

\begin{remark}
	The parametrizations of the wave curves in Section \ref{curves} are in terms of 
	pressure and specific volume ratios. This has the following useful consequence: 
	the outgoing wave strengths in interactions are given in terms of three nonlinear 
	algebraic equations that involves {\em only} the outgoing wave strengths (and 
	$\gamma$). In particular, the equations are independent of, say, 
	the left-most state in the interaction. See equations \eq{Iint1}-\eq{Iint3}, 
	\eq{IIint1}-\eq{IIint3}, and \eq{IIIint1}-\eq{IIIint3}.
\end{remark}

\begin{figure}\label{interactns1}
	\centering
	\includegraphics[width=11cm,height=2.7cm]{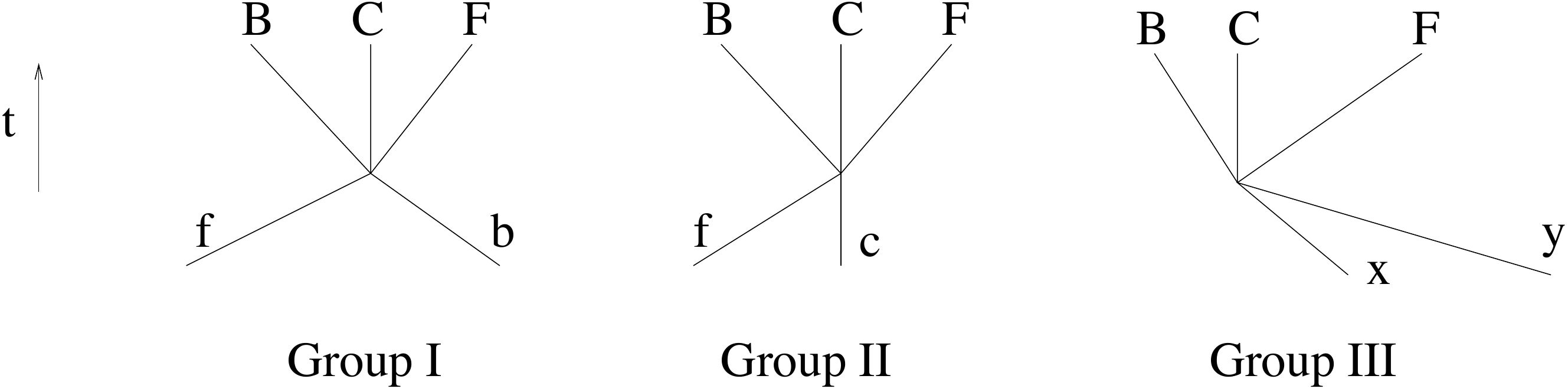}
	\caption{Interactions: head-on (I), involving a contact (II), overtaking (III). (Schematic)}
\end{figure} 
%
%

\subsection{Summary of results}
In this section we collect our results for interactions in each of the three
groups. For each group we consider every combination of incoming waves,
as listed in \eq{gr1}-\eq{gr3}, and we describe all possible combinations of outgoing 
waves in the corresponding interaction Riemann problem. As discussed in the 
introduction it does not seem possible to determine outgoing strengths as explicit 
functions of incoming strengths. Instead we describe the ``transitional curves" in the 
plane of incoming strengths. For example, for head-on interactions with incoming 
wave strengths $b$ and $f$, the outgoing backward wave $B=B(b,f)$ may be 
a rarefaction or a shock. According to \eq{bkwd_wave} these outcomes correspond 
to $B(b,f)<1$ and $B(b,f)>1$, respectively. The transitional curve for the backward wave 
in this case is the curve $\{(b,f)\,|\, B(b,f)=1\}$. Similarly, there are transitional curves for the outgoing 
contact $C$ and forward wave $F$. It will turn out that these sets really are curves in the 
plane of incoming strengths. In addition we determine the location of all ``vacuum-curves", 
i.e., the curves in the plane of incoming strengths which delimit interactions for which 
the interaction Riemann problem contains a vacuum state.

To describe the results we make reference to Figures 2 - 6. 
First, for Group I and Group II we give two diagrams in the plane of incoming 
strengths (i.e., the $(b,f)$-plane for Group I and the $(f,c)$-plane for Group II).
The left diagram depicts the various combinations of incoming waves: see left diagrams 
in Figure 2 and Figure 3. The diagram on the right in each of these figures shows the resulting,
outgoing wave configurations, together with the transitional curves and vacuum 
curves. It turns out that the vacuum curve in both of the right diagrams in 
Figure 2 and Figure 3 are explicitly given as graphs. The same is true for the 
transitional curve for the outgoing contact in Ia (${\fa{S}}{\ba{S}}$) interactions (it is given by $bf=1$).
However, the transitional curve for the outgoing contact in IId (${\fa{R}}\jup$) interactions is only
implicitly determined in our analysis. The figures are schematic; see Remark \ref{C=1_curve}.

Before considering Group III interactions we state the results for Groups I and II.
We recall that the {\em strength} of a shock and rarefaction is defined as the 
pressure ratio $p_{right}/p_{left}$, while the strength of a contact wave is defined
as the specific volume ratio $\tau_{right}/\tau_{left}=\rho_{left}/\rho_{right}$. 
As a consequence all incoming strengths in pairwise interactions are assumed 
to be $\neq 1$.

\begin{theorem}[Group I interactions]\label{sum_grI}
Consider the head-on interactions of two elementary waves in \eq{gr1}. Let the 
incoming forward and backward waves have strengths $f$ and $b$, respectively 
(see left diagram in Figure 1). Then:
\begin{itemize}
	\item [(i)] the outgoing backward wave $B$ satisfies
	\beq\label{sumI1}
		B\gtrless 1 \quad \Leftrightarrow \quad b\gtrless 1
	\eeq
	\item [(ii)] the outgoing forward wave $F$ satisfies
	\beq\label{sumI2}
		F\gtrless 1 \quad \Leftrightarrow \quad f\gtrless 1
	\eeq
	\item [(iii)] the outgoing contact $C$ satisfies
	\begin{itemize}
		\item Ia (${\fa{S}}{\ba{S}}$): $\quad C\gtrless 1\quad \Leftrightarrow \quad bf\gtrless 1$
		\item Ib (${\fa{S}}{\ba{R}}$): $\quad C> 1$
		\item Ic (${\fa{R}}{\ba{R}}$): $\quad C=1$ (no outgoing contact)
	\end{itemize}
	\item [(iv)] for vacuum formation we have: there is never vacuum formation 
	in Ia and Ib interactions, while for Ic interactions vacuum occurs
	if and only if the incoming strengths $f$ and $b$ satisfy
	\beq\label{Ic_vac} 
		b^\zeta+f^{-\zeta}\leq 1\qquad\text{where $\zeta=\textstyle\frac{\gamma-1}{2\gamma}$.}
	\eeq
\end{itemize}
The situation is summarized in Figure 2.
\end{theorem}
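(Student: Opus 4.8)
The plan is to reduce everything to the single scalar equation \eq{Riem} for the outgoing backward strength $B$, specialized to each head-on interaction, and then read off the signs of $B-1$, $F-1$, and $C-1$ together with the vacuum condition \eq{vac_cond}. First I would record the ``interaction Riemann problem'' explicitly: for a head-on interaction the incoming forward wave $\fa W(f;\cdot)$ and the incoming backward wave $\ba W(b;\cdot)$ connect the extreme left state $U_l$ through a middle state to $U_r$, so that $\frac{p(U_r)}{p(U_l)} = bf$ and $\frac{\tau(U_r)}{\tau(U_l)} = \fa\phi(f)\,\ba\phi(b)$, while the velocity jump $\frac{u(U_l)-u(U_r)}{\sqrt{\tau(U_l)p(U_l)}}$ is obtained by composing the two wave-curve formulas \eq{bkwd_wave}, \eq{frwd_wave}. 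Substituting into \eq{Riem} gives a closed equation $\mathcal F(B) = \ba\psi(b) + (\text{stuff})\cdot\fa\psi(f)$ whose right side is expressed purely in $b$, $f$, $\gamma$; I would write this as equations \eq{Iint1}--\eq{Iint3} (the outgoing analogue of \eq{BCF}), noting as in the Remark that the left-most state drops out.

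Parts (i) and (ii) are then monotonicity arguments. Since $\ba\psi$ is strictly increasing with $\ba\psi(1)=0$ and $B\mapsto\mathcal F(B)$ is strictly increasing (as quoted after \eq{Riem}), the sign of $B-1$ is the sign of $\frac{\bar u - u}{\sqrt{\bar\tau\bar p}} - \mathcal F(1)$, and I would show $\mathcal F(1)$ equals precisely the velocity-jump value produced when $b=1$, i.e. when the incoming backward wave is trivial; monotone dependence of the whole configuration on $b$ then yields $B\gtrless 1 \Leftrightarrow b\gtrless 1$. By the $x\leftrightarrow -x$ symmetry the same argument applied to the forward field gives (ii). For (iii), the third relation $p(U_r) = BF\,p(U_l)$ combined with $p(U_r) = bf\,p(U_l)$ gives $BF = bf$; for Ia both incoming waves are shocks ($b,f>1$), the outgoing backward and forward waves are again shocks by (i)--(ii), and then $C = \tau(U_r)/(\ba\phi(B)\fa\phi(F)\tau(U_l))$; using $BF=bf$ and the explicit shock formula $\ba\phi(s)=\frac{1+as}{s+a}$ one computes the sign of $C-1$ and finds it is governed by $bf\gtrless 1$ — this is the one genuinely computational sub-step, a rational-function inequality in $b,f,a$. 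For Ib ($\fa S\ba R$) one has $f<1<b$, hence $B>1$, $F<1$, and I would show directly from the same formulas that $C>1$ always; for Ic ($\fa R\ba R$) both outgoing extremal waves are rarefactions, across which specific volume changes isentropically in a way that forces $\ba\phi(B)\fa\phi(F)\tau(U_l)=\tau(U_r)$ exactly, so $C=1$ and there is no contact (this also follows from Remark \ref{entr_jmp}, since entropy is constant across all four rarefactions and hence $S_l=S_r$).

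For (iv) I would invoke the vacuum criterion \eq{vac_cond}: no vacuum $\Leftrightarrow u(U_r)-u(U_l) < \frac{2}{\gamma-1}(\mathfrak c_l + \mathfrak c_r)$. In Ia and Ib at least one incoming wave is a shock, which is compressive, so the velocity difference stays below the critical threshold — concretely I would bound $u(U_r)-u(U_l)$ using the shock branches of $\ba\psi,\fa\psi$ (which grow sublinearly relative to the rarefaction escape speeds) and conclude no vacuum. In Ic both waves are rarefactions; using the rarefaction branches $\ba\psi(b)=\nu(b^\zeta-1)$, $\fa\psi(f)=\nu(f^\zeta-1)$ and \eq{soundspeed}, the condition \eq{vac_cond} becomes, after expressing sound speeds in terms of the pressure ratios across the incoming rarefactions, exactly $b^\zeta + f^{-\zeta}\le 1$ with $\zeta=\frac{\gamma-1}{2\gamma}$ — this is a direct algebraic rearrangement. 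The main obstacle I anticipate is the bookkeeping in the $C$-computation for Ia: carrying the composed wave-curve relations through and reducing $C-1$ to a single sign condition requires care with the shock formula $\ba\phi(s)=\frac{1+as}{s+a}$ and the identity $BF=bf$, and one must check that $B,F$ indeed stay in the shock range so that this formula applies; the properties of $\ba\phi,\fa\phi,\ba\psi,\fa\psi$ collected in Section \ref{aux} (monotonicity, range, behavior at $1$) are what make this tractable, and I would lean on them rather than re-deriving estimates each time.
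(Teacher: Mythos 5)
Your overall architecture matches the paper's: reduce to a single strictly monotone scalar equation for $B$ (the paper's $\cG(B;b,f)=0$ in \eq{IB}), compare with the value at the transitional strength, use the entropy argument for Ic, and translate the vacuum criterion into $b^\zeta+f^{-\zeta}>1$ (your Ic computation for part (iv) is exactly equivalent to the paper's $\cG(0;b,f)<0$ criterion). Your part (ii) is a genuine and legitimate shortcut: the paper proves \eq{sumI2} directly via a directional-derivative computation along the hyperbolas $bf=\mathrm{const}$, whereas you get it from the $x\leftrightarrow-x$ reflection applied to part (i) (the reflection sends the head-on data $(f,b)$ to the head-on data $(1/b,1/f)$ and sends $B,F$ to $1/F,1/B$), which is cleaner provided (i) is established for all head-on configurations --- and it is.

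The genuine gap is in part (iii), cases Ia and Ib. You assert that the sign of $C-1$ in Ia reduces, via $BF=bf$ and the shock formula for $\ba\phi$, to ``a rational-function inequality in $b,f,a$''. It does not: $C=\ba\phi(b)\fa\phi(f)/\big(\ba\phi(B)\fa\phi(F)\big)$ involves the outgoing strengths $B,F$, which are only implicitly determined, and the constraint $BF=bf$ together with the sign information from (i)--(ii) does not by itself fix the sign of $C-1$. Writing $\ba\phi(B)\fa\phi(F)=A(B+F,bf)$ with $A$ as in \eq{Axi}, one has $C=A(b+f,bf)/A(B+F,bf)$, and since $A(\cdot,\xi)$ is increasing or decreasing in its first argument according to $\xi\gtrless 1$ (property \eq{Aa_prop}), the conclusion $C\gtrless1\Leftrightarrow bf\gtrless1$ requires the additional inequality $B+F<b+f$ --- equivalently $1<B<b$, after which $F<B$, $BF=bf$, and the monotonicity of $s\mapsto s+bf/s$ on $(\sqrt{bf},\infty)$ finish the job. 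Proving $B<b$ is not a rational-function computation in $b,f,a$; the paper does it by showing $\cG(b;b,f)>0$ (Lemma \ref{out_in}), i.e.\ the outgoing backward shock is strictly weaker than the incoming one. The same issue arises in Ib: there $C=D(f)/D(F)$ with $D$ as in \eq{D}, so $C>1$ hinges on $F<f$, which again needs the sign of $\cG(b;b,f)$ in that regime. Without some such comparison between outgoing and incoming strengths your Ia/Ib argument cannot close. (A smaller caveat: the ``monotone dependence on $b$'' you invoke in part (i) is precisely the content of $\partial_b g<0$ in the paper, which rests on the bounds $m(b)<\tfrac12<\ell(q)$ from Section \ref{aux}; it is true, but it is the crux of (i) rather than a freebie.)
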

The proof of Theorem \ref{sum_grI} is given in Section \ref{Gr_I}. Some additional 
information about the outgoing strengths in Ia and Ib interactions are recorded in 
Lemma \ref{out_in}.

\begin{figure}\label{interactns2}
	\centering
	\includegraphics[width=13.5cm,height=6cm]{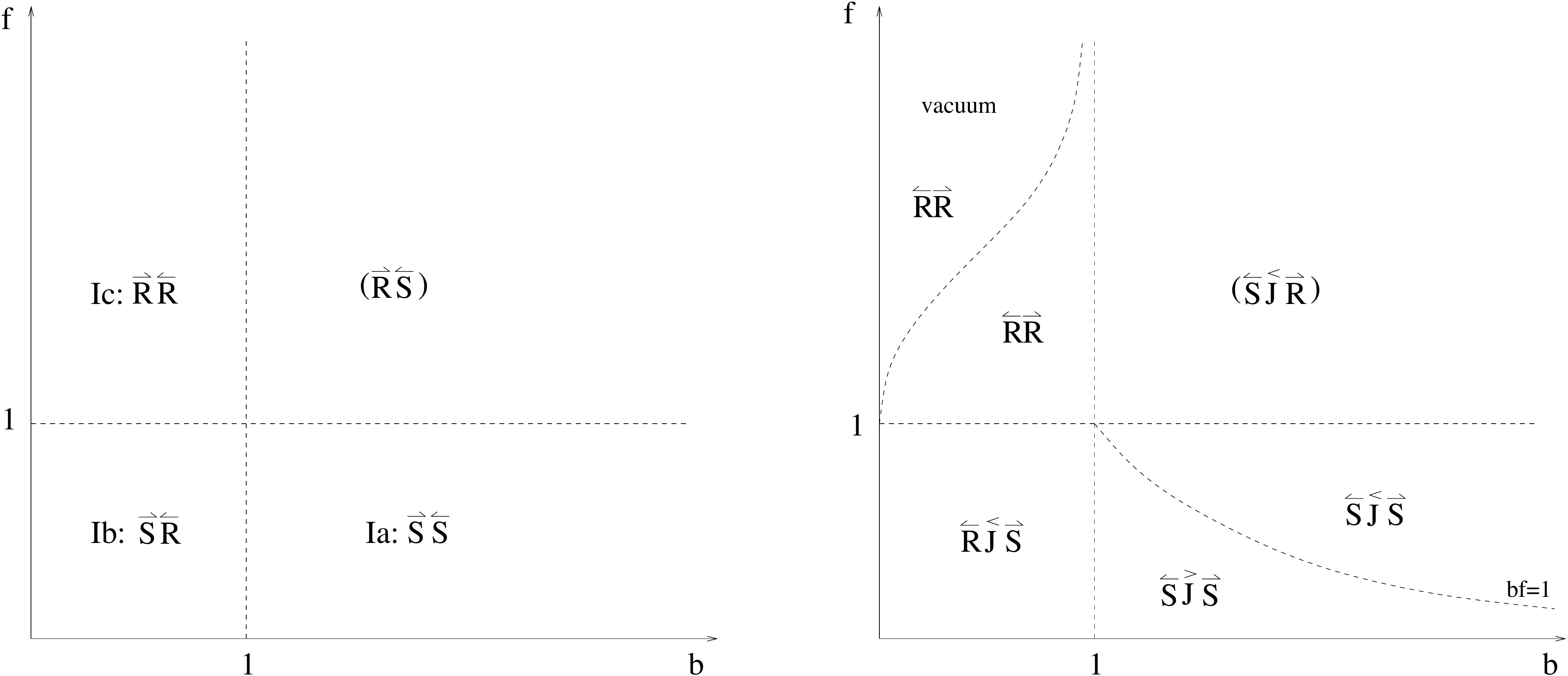}
	\caption{Group I interactions: incoming (left) and outgoing (right) waves; schematic.
	For vacuum configurations, see Remark \ref{entr_jump}.}
\end{figure} 
%
%

\begin{theorem}[Group II interactions]\label{sum_grII}
Consider the interactions of an elementary forward wave with a contact wave in \eq{gr2}. 
Let the incoming forward and contact waves have strengths $f$ and $c$, respectively 
(see middle diagram in Figure 1). Then:
\begin{itemize}
	\item[(i)] the outgoing backward wave $B$ satisfies
	\beq\label{IIa and IIb}
		- \text{ IIa and IIb (${\fa{S}}J$):}\quad B\gtrless 1 \quad \Leftrightarrow \quad c\lessgtr 1\,.
	\eeq
	\beq\label{IIc and IId}
		- \text{ IIc and IId (${\fa{R}}J$):}\quad B\gtrless 1 \quad \Leftrightarrow \quad c\gtrless 1\,.
	\eeq
	\item[(ii)]  the outgoing forward wave $F$ satisfies
	\beq\label{IIint_1}
		  F\gtrless 1\quad \Leftrightarrow \quad f\gtrless 1\,.
	\eeq
	\item[(iii)]  the outgoing contact $C$ satisfies
	\begin{itemize}
		\item IIa (${\fa{S}}\jdown$): $\quad c<C<1$
		\item IIb (${\fa{S}}\jup$): $\quad 1<C<c$
		\item IIc (${\fa{R}}\jdown$): $\quad C=c<1$
		\item IId (${\fa{R}}\jup$): in this case $f,\, c>1$ and $C<c$. However, the 
		outgoing contact may be either a $\jup$ ($C>1$) or a $\jdown$ ($C<1$) contact.
		More precisely, for fixed $c>1$ the map 
		\[f\mapsto C=C(f,c)\] 
		takes on every value in $(0,c)$ as $f$ increases from $1$ to $\infty$.
	\end{itemize}
	\item[(iv)]  for vacuum formation we have: there is never vacuum formation in 
	IIa, IIb, and IId interactions, while for IIc interactions a vacuum occurs
	if and only if the incoming strengths $f$ and $c$ satisfy
	\beq\label{IIc_vac}
		f\geq f^*(c):=\Big(\frac{2}{1-\sqrt{c}}\Big)^\frac{1}{\zeta}
		\qquad\text{where $\zeta=\textstyle \frac{\gamma-1}{2\gamma}$.}
	\eeq
\end{itemize}
The situation is summarized in Figure 3.
\end{theorem}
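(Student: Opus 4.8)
The plan is to set up the interaction Riemann problem for a forward wave of strength $f$ followed by a contact of strength $c$, deriving the governing equations \eq{IIint1}--\eq{IIint3} analogous to \eq{Iint1}--\eq{Iint3}, and then read off each of the four claims from monotonicity and sign properties of the auxiliary functions collected in Section \ref{aux}. Concretely, the incoming configuration connects $U_l$ to $U_m$ by a forward wave (so $p(U_m)/p(U_l)=f$) and $U_m$ to $U_r$ by a stationary contact (so $p(U_r)=p(U_m)$, $u(U_r)=u(U_m)$, $\tau(U_r)/\tau(U_m)=c$). Plugging into \eq{BCF} and using that the contact carries no pressure or velocity jump, the total pressure ratio across the outgoing fan is $p(U_r)/p(U_l)=f$, which by $p=BF\bar p$ forces $BF=f$; this already isolates the mechanism behind (ii). For (i) one substitutes $BF=f$ into the middle equation of \eq{BCF} (equivalently into \eq{Riem}) and uses the explicit velocity balance: the incoming forward wave contributes $+\fa\psi(f)\sqrt{\bar\tau\bar p}$ to $u_r-u_l$ while the contact contributes nothing, so $B$ is determined by $\fa\psi(f)=\fa\psi(F)\sqrt{C\,B\,\ba\phi(B)}-\ba\psi(B)$ together with $C\ba\phi(B)\fa\phi(F)\bar\tau=c\fa\phi(f)\bar\tau$ and $BF=f$.

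First I would dispose of (ii): since $p_r/p_l=f$ is unchanged by the contact and $p=BF\bar p$, we get $BF=f$; combined with (i) (which pins down $B$ relative to $1$) this gives $F\gtrless1\Leftrightarrow f\gtrless1$ once we know $B$ stays on the correct side, but the cleaner route is to observe directly from the structure that the outgoing forward wave must ``absorb'' the pressure jump the incoming forward wave created, so its type is inherited — I expect the paper's Section \ref{aux} lemmas on $\fa\psi$ make this immediate. Next, for (i), the key is the reduced scalar equation for $B$: treating the right-hand side of \eq{Riem} as a strictly increasing function of $B$ (as established right after \eq{Riem}), I would evaluate it at $B=1$ and show the resulting value equals the left-hand side precisely when $c=1$, with the inequality flipping according to the sign of $\log c$ — and with the direction of the flip depending on whether the incoming forward wave is a shock ($f<1$) or rarefaction ($f>1$), which is exactly the dichotomy \eq{IIa and IIb} versus \eq{IIc and IId}. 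This amounts to computing $\mathcal F(1)$ in the interaction configuration and comparing signs; the $\sqrt{\tau p/\bar\tau\bar p}$ factor in \eq{Riem} is where the $\fa S$ vs.\ $\fa R$ asymmetry enters, since $\fa\phi(f)$ behaves oppositely in the two regimes.

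For (iii) I would use the relation $C\,\ba\phi(B)\,\fa\phi(F)=c\,\fa\phi(f)$ from the $\tau$-equation in \eq{BCF}. In cases IIc (${\fa R}\jdown$), the incoming wave is a rarefaction with $f>1$ and the contact has $c<1$; I expect one shows $B=1$ (no backward wave is generated, consistent with \eq{IIc and IId} at the boundary) forcing $F=f$ and hence $C=c$ — actually the cleanest statement is $C=c<1$, obtained because a rarefaction overtaking nothing just re-sorts. For IIa and IIb one shows $C$ lies strictly between $c$ and $1$ by tracking the sign of $\ba\phi(B)\fa\phi(F)/\fa\phi(f)$ using the monotonicity of $\ba\phi,\fa\phi$ together with the location of $B$ and $F$ relative to $1$ established in (i)--(ii). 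The genuinely hard part is IId (${\fa R}\jup$), where $f,c>1$: here one must show $C=C(f,c)$ ranges over all of $(0,c)$ as $f\to\infty$. I would argue that $C(f,c)\to c$ as $f\to1^+$ (the interaction degenerates), that $C$ is continuous in $f$, and that $C(f,c)\to0$ as $f\to\infty$ by estimating the growth of $\ba\phi(B)$ against $\fa\phi(F)$ — since $F\to\infty$ makes $\fa\phi(F)=F^{-1/\gamma}\to0$ while $B$ stays bounded (or grows slower), the product $C=c\fa\phi(f)/(\ba\phi(B)\fa\phi(F))$ must... wait, that would blow up, so in fact one needs $B\to\infty$ fast enough, i.e.\ the reflected backward shock strengthens without bound; establishing the precise asymptotic rate of $B(f,c)$ as $f\to\infty$ from the implicit equation \eq{Riem} is the main obstacle, and I would handle it by extracting leading-order behavior of $\ba\psi$ and $\fa\psi$ at their respective infinities from the Section \ref{aux} estimates.

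Finally, for (iv), vacuum formation is governed by the no-vacuum criterion \eq{vac_cond}, equivalently $\mathcal F(0)<(\bar u-u)/\sqrt{\bar\tau\bar p}$. I would compute $\mathcal F(0)$ and the velocity difference explicitly for each of the four interaction types. In IIa, IIb, IId the incoming waves either compress or the velocity jump is too small to reach the vacuum threshold, so one checks the strict inequality holds for all admissible $f,c$ — a direct computation using $\ba\psi(0)=-\nu$ and the formula for $\fa\psi$. In IIc, the incoming forward rarefaction ($f>1$) produces a large positive velocity jump $\fa\psi(f)\sqrt{\bar\tau\bar p}$ that grows like $\nu f^\zeta$, and the contact ($c<1$) modifies the relevant $\sqrt{\tau p}$ factor by $\sqrt{c}$; setting $\mathcal F(0)$ equal to the velocity difference and solving for the threshold $f$ yields exactly $f^*(c)=(2/(1-\sqrt c))^{1/\zeta}$ — the constant $2$ coming from $\nu\cdot$(the $a\to$ combination) and the $\sqrt c$ from the contact's rescaling of $\bar\tau\bar p$ on one side. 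This last computation is routine once the $\mathcal F(0)$ expression is in hand; the only care needed is bookkeeping which state's $\sqrt{\tau p}$ appears after the contact is traversed.
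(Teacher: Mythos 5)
Your overall strategy coincides with the paper's: reduce to a scalar equation $\cH(B;f,c)=0$ with $\cH$ strictly increasing in $B$, evaluate at $B=1$ for part (i), at $B=f$ for part (ii), at $B=0$ for part (iv), and use the $\tau$-equation $c\fa\phi(f)=C\ba\phi(B)\fa\phi(F)$ together with $BF=f$ for part (iii). Parts (i), (ii) and (iv) are essentially right; for (ii) the phrase ``its type is inherited'' is not an argument, but the intended computation is the one-liner $\cH(f;f,c)=\ba\psi(f)+\fa\psi(f)\gtrless 0\Leftrightarrow f\gtrless 1$, which you should write out. Part (iii), however, contains a genuine error and two gaps. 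The error: in case IIc you assert $B=1$ (``no backward wave is generated''), forcing $F=f$ and hence $C=c$. This is false and contradicts your own part (i): for $\fa R\jdown$ one has $c<1$, hence $B<1$, i.e.\ a nontrivial outgoing backward \emph{rarefaction}, and then $F=f/B>f$. The correct reason $C=c$ is an entropy argument: both outgoing extreme waves are rarefactions, across which entropy is constant (Remark \ref{entr_jmp}), so the entropy jump across the outgoing contact equals that across the incoming one, which (pressure being continuous across contacts) forces $C=c$.

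The gaps: for IIa and IIb the strict two-sided bounds $c<C<1$ and $1<C<c$ do not follow from ``monotonicity of $\ba\phi,\fa\phi$'' alone. The comparison of $C$ with $1$ uses the velocity equation \eq{IIint2} (e.g.\ $\fa\psi(f)+\ba\psi(B)>\fa\psi(F)\sqrt{B\ba\phi(B)}$ in IIa), and the comparison of $C$ with $c$ reduces to a genuinely multiplicative inequality, $E(BF)\gtrless E(B)E(F)$ for $E(x)=\frac{1+ax}{x+a}$ (Lemma \ref{E_lem}), for which your sketch offers no substitute. For IId you correctly isolate the crux — that $B(f,c)\to\infty$ as $f\to\infty$ — but leave it unresolved; note that the $\tau$-equation collapses to $C=c/D(B)$ with $D$ strictly increasing to $\infty$, and the needed monotone surjectivity of $f\mapsto B(f,c)$ follows from \eq{Bfc}, where for fixed $c>1$ both sides are increasing bijections of $f$, respectively $B$, onto $(1,\infty)$; no asymptotic expansion of $\ba\psi,\fa\psi$ is required. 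Without these repairs the proposal does not yet prove part (iii).
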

The proof of Theorem \ref{sum_grII} is given in Section \ref{Gr_II}. 
The location of the transition curve $\{C=1\}$ in IId interactions is analyzed in 
Section \ref{IId_trans}.

\begin{figure}\label{interactns3}
	\centering
	\includegraphics[width=13.5cm,height=6cm]{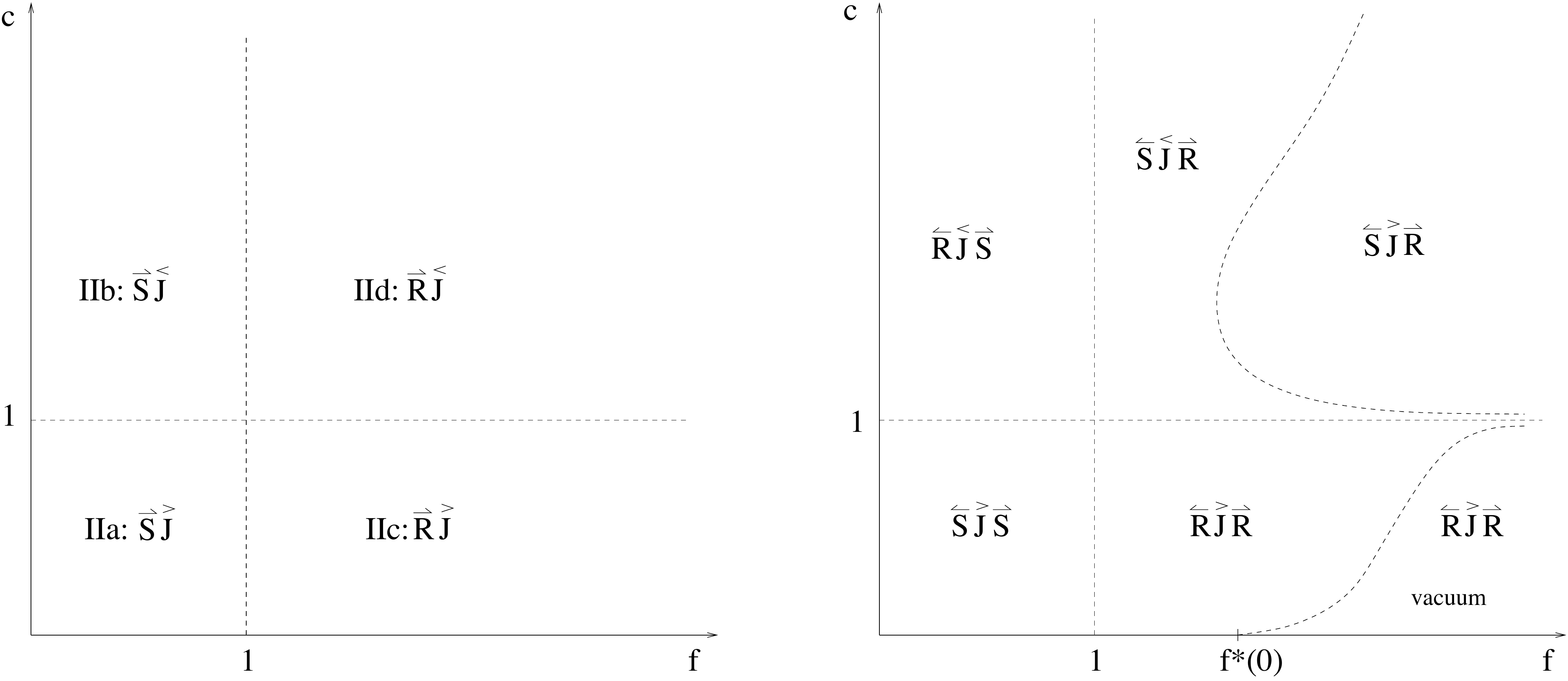}
	\caption{Group II interactions: incoming (left) and outgoing (right) waves; 
	schematic (see Remark \ref{C=1_curve}). For vacuum configurations, see Remark \ref{entr_jump}.}
\end{figure} 

We finally consider Group III interactions of two overtaking waves, which we take to be
backward waves. The possible combinations of incoming waves 
are ${\ba{S}}{\ba{S}}$ (IIIa), ${\ba{S}}{\ba{R}}$ (IIIb), and ${\ba{R}}{\ba{S}}$ (IIIc), since 
two backward rarefactions do not meet. Letting $x$ and $y$ denote the strengths of the 
left and right incoming waves, respectively, the incoming configurations are listed in the 
$(x,y)$-plane in Figure 4. 
As described in the introduction the outcomes of these interactions depend sensitively on the 
value of the adiabatic constant $\gamma$ (as well as on the incoming strengths), and 
we consider four different cases:
$\gamma\in (1,\frac{5}{3})$, $\gamma=\frac{5}{3}$, $\gamma\in (\frac{5}{3},2)$, and 
$\gamma\geq 2$. The resulting outgoing configurations are given in the $(x,y)$-plane
by Figure 5 for $\gamma\in (1,\frac{5}{3}]$, and in Figure 6 for $\gamma>\frac{5}{3}$. 

In the following statement $y_0$ denotes the unique root different
from $1$ of the function $\alpha(y):=\fa\psi(y)-\ba\psi(y)$ when $\gamma\neq \frac{5}{3}$, 
while $y_0=1$ when $\gamma=\frac{5}{3}$. (The function $\alpha$ is analyzed in 
Section \ref{alfa}.) Also, to shorten notation we write e.g.\ $B$ instead of $B(x,y)$ for the 
strength of the backward outgoing wave.

\begin{theorem}[Group III interactions]\label{sum_grIII}
Consider the interactions of two overtaking, elementary backward waves listed in \eq{gr3}. 
Let the left and right incoming waves have strengths $x$ and $y$, respectively,
and let the outgoing waves have strengths $B$, $C$ and $F$ 
(see right diagram in Figure 1). Then for all values of $\gamma>1$:
\begin{itemize}
	\item[(i)] the locus $\{B=1\}$, indicated by a dashed curve in Figures 5 - 6, coincides 
	with a $C^2$-smooth and decreasing graph $y=k(x)$.  
	The graph satisfies $k(1)=1$ and has asymptotes $x=0$ and 
	$y=\hat y(\gamma)\in(0,1)$ where $\hat y(\gamma)$ is given in \eq{yhat}.
	The outgoing backward wave $B$ is a shock (i.e.\ $B>1$) 
	if and only if $y>k(x)$.
	\item[(ii)] the locus $\{F=1\}$ consists of three curves, indicated by solid lines in 
	Figures 5 - 6: $\{x\equiv 1\}$, $\{y\equiv 1\}$, and $\{y=h(x)\}$. 
	We refer to these figures for the type of the outgoing forward (reflected) wave in 
	Group III interactions (see also Figures 8-14). As indicated there the graph 
	$y=h(x)$ have different global properties according to the value of the adiabatic 
	constant $\gamma>1$. However, for each value of $\gamma>1$:
	\begin{itemize}
		\item $y=h(x)$ has a horizontal asymptote $y_*=y_*(\gamma)$ as $x\uparrow +\infty$,
		\item the graphs $y=k(x)$ and $y=h(x)$ intersect at the 
		point $(\frac{1}{y_0},y_0)$. 
	\end{itemize}
	\item[(iii)] the outgoing contact $C$ satisfies:
	\begin{itemize}
		\item $C<1$ in ${\ba{S}}{\ba{S}}$ interactions
		\item $C>1$ in ${\ba{S}}{\ba{R}}$ and ${\ba{R}}{\ba{S}}$ interactions.
	\end{itemize}
	\item[(iv)] for vacuum formation we have: a vacuum never 
	appears in ${\ba{S}}{\ba{S}}$ and ${\ba{R}}{\ba{S}}$ interactions, while a vacuum 
	appears in an ${\ba{S}}{\ba{R}}$ interaction if and only if the incoming strengths 
	$y<1<x$ satisfy
	\beq\label{IIIb_vacu}
		0<y\leq \left[\textstyle\frac{1}{2}\left(1-\frac{v(x)}{\nu}\right)\right]^\frac{1}{\zeta}\,.
	\eeq
	where 
	\[\zeta={\textstyle\frac{\gamma-1}{2\gamma}}\qquad\text{and}\qquad 
	v(x):=\frac{\nu\sqrt{x+a}+\kappa(x-1)}{\sqrt{x+a x^2}}\]
\end{itemize}
The situation is summarized in Figures 4, 5, and 6.
\end{theorem}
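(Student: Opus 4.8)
The plan is to reduce everything to the single nonlinear equation for overtaking interactions (the Group III analogue of \eq{Riem}, i.e.\ \eq{IIIint1}--\eq{IIIint3}) and then to study each of the four loci — $\{B=1\}$, $\{F=1\}$, $\{C \lessgtr 1\}$, and the vacuum set — separately, combining the monotonicity properties of $\ba\psi,\fa\psi,\ba\phi,\fa\phi$ collected in Section~\ref{aux} with the auxiliary function analyses promised for $\alpha$ (Section~\ref{alfa}). The key structural fact is that, because the wave curves are parametrized by pressure/specific-volume ratios, the three outgoing strengths $B,C,F$ are determined by equations involving \emph{only} $B,C,F,x,y,\gamma$; so each locus can be written as the vanishing of an explicit function of $(x,y)$ after eliminating the other two outgoing strengths.

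First I would set up the interaction Riemann problem for two backward waves with strengths $x$ (left) and $y$ (right): the incoming pair connects $U_l$ to $U_m$ by a $\ba W(x;\cdot)$ step and $U_m$ to $U_r$ by a $\ba W(y;\cdot)$ step, and since contacts are stationary in the Lagrangian frame the composite left-to-right velocity/pressure/volume change is read off directly from \eq{bkwd_wave}. Plugging into \eq{BCF}--\eq{Riem} gives $p = B F \bar p = x y \bar p$, so $BF = xy$ throughout Group III, and a single equation $\mathcal F(B) = (\text{explicit function of }x,y)$ for $B$, with $F = xy/B$ and $C$ then recovered from the first equation in \eq{BCF}. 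For part~(i) I would show that the equation $B=1$ forces a relation $G(x,y)=0$ where $G$ is built from $\ba\psi,\fa\psi$ evaluated at $x$, $y$, $xy$; monotonicity of $\ba\psi$ (strictly increasing, $\to+\infty$) gives that for each $x$ there is a unique $y=k(x)$ solving it, and implicit differentiation plus the sign of $\partial_y(\text{the defining expression})$ gives $k'<0$ and $C^2$-smoothness; the asymptotes $x=0$ and $y=\hat y(\gamma)$ come from taking $x\to 0^+$ and $x\to\infty$ in $G(x,y)=0$ and identifying the limiting equation for $y$, which should match \eq{yhat}. The claim $B>1 \Leftrightarrow y > k(x)$ then follows from monotonicity of $\mathcal F$ in $B$ together with monotonicity of the right-hand side data in $y$.

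For part~(ii), since $BF=xy$, the locus $\{F=1\}$ is $\{B = xy\}$; substituting $B=xy$ into the Riemann equation collapses it (because $\fa\psi$ at $F=1$ vanishes and $\ba\psi$-terms combine) to the condition that $\alpha(\text{something})=0$ or that $x$ or $y$ equals $1$, yielding the three components $\{x\equiv1\}$, $\{y\equiv1\}$, $\{y=h(x)\}$. The horizontal asymptote $y_*(\gamma)$ as $x\to\infty$ again comes from the limiting form of the defining equation; the intersection of $y=k(x)$ and $y=h(x)$ at $(\tfrac{1}{y_0},y_0)$ is where both $B=1$ and $F=1$, hence $xy=1$ and $\alpha(y)=\fa\psi(y)-\ba\psi(y)=0$, so $y=y_0$ and $x=1/y_0$ by definition of $y_0$. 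Part~(iii) I would handle in $(\tau,u,S)$-variables as the paper suggests: the sign of $\log C$ equals the sign of the total entropy change minus the contact's own contribution; across the two incoming backward waves entropy is nondecreasing in an $\ba S$ and constant across an $\ba R$, and across the two outgoing non-contact waves likewise, so a signed entropy-balance argument pins $C<1$ for $\ba S\ba S$ and $C>1$ for $\ba S\ba R$ and $\ba R\ba S$. Part~(iv) is an application of the vacuum criterion \eq{vac_cond}: no vacuum iff $u_r - u_l < \tfrac{2}{\gamma-1}(\mathfrak c_l + \mathfrak c_r)$; computing $u_r-u_l$ and the sound speeds from the two-step backward composition and simplifying gives the explicit threshold \eq{IIIb_vacu}, and shows the inequality can only be violated when $y<1<x$ (an $\ba S$ on the left and $\ba R$ on the right), which is exactly the $\ba S\ba R$ case.

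I expect the main obstacle to be part~(ii): controlling the global shape of the reflected-wave transitional curve $y=h(x)$, and in particular verifying that it is a single $C^2$ graph with the stated horizontal asymptote, uniformly in the regime $\gamma>1$ while the qualitative picture genuinely changes at $\gamma=\tfrac53$ and $\gamma=2$. This is precisely the analysis the introduction flags as the hardest, and it will require the detailed study of $\alpha(y)=\fa\psi(y)-\ba\psi(y)$ and of the composite function defining $h$ — its monotonicity, convexity, and limiting behavior — carried out in Sections~\ref{alfa}, \ref{group3_reflected}, and \ref{aux}; the smoothness and montonicity of $k(x)$ in part~(i), the entropy-balance sign analysis in part~(iii), and the vacuum computation in part~(iv) are comparatively routine once the auxiliary-function lemmas are in hand.
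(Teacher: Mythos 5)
Your overall reduction — deriving the system \eq{IIIint1}--\eq{IIIint3}, using $BF=xy$, characterizing $\{B=1\}$ and $\{F=1\}$ as zero sets of explicit functions of $(x,y)$, and using the $\cK(0;x,y)<0$ (equivalently \eq{vac_cond}) criterion for vacuum — is the same as the paper's, and parts (i) and (iv) of your plan would go through essentially as written. But there are two genuine problems.

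The serious gap is in part (iii). A ``signed entropy-balance'' does not determine the sign of $\log C$. The entropy relation \eq{IIIcon2} gives $\gamma\log C=\ba\eta(\tx)+\ba\eta(\ty)-\ba\eta(L)-\fa\eta(I)$, and in the hard cases (e.g.\ $\ba S\ba S\to\ba S J\fa S$ and $\ba S\ba S\to\ba S J\fa R$, both of which occur for $\gamma>\frac53$) every nonzero term on the right is positive: you must show that the \emph{outgoing} shocks produce strictly more entropy than the incoming ones, which is a quantitative comparison, not a sign count. The paper closes this with two nontrivial inputs: the sub/super-multiplicativity of $\Gamma(s)=s^\gamma(1-as)/(s-a)$ (Lemma \ref{Gamma_prop}), used together with the volume constraint $\tx\ty=CLI$ in a contradiction argument for $\ba S J\fa S$; and, for $\ba S\ba S\to\ba S J\fa R$, the concavity of $\Lambda=\ba\xi\circ\Omega^{-1}$ (Lemma \ref{Lambda_prop}), which couples the entropy relation to the \emph{velocity} relation \eq{IIIcon1}. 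Your plan uses only the entropy relation and cannot reach the conclusion in these cases.

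In part (ii) there is a structural misstatement: substituting $B=xy$ does not collapse the equation to ``$\alpha(\cdot)=0$ or $x=1$ or $y=1$.'' The third component of $\{F=1\}$ is the zero set of $H(x,y)=\ba\psi(xy)-\ba\psi(x)-\ba\psi(y)M(x)$, which reduces to a level set of $\alpha$ only where $xy\le1$ (there $H(x,y)=x^\zeta\alpha(y,a)$, resp.\ $H(\frac1y,y)=M(\frac1y)\alpha(y,a)$); on $\{xy>1\}$ the curve $y=h(x)$ requires the separate analyses of $H$ in regions IIIa, IIIb2, IIIc2 (Sections \ref{H_in_IIIa}--\ref{eta}), with the qualitative changes at $\gamma=\frac53$ and $\gamma=2$ coming from the sign of $\del_z\theta$ and the root $\bar x=4a^2/(1-3a)$. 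You correctly flag this as the main obstacle and your argument for the intersection point $(\frac1{y_0},y_0)$ (via $BF=xy=1$ and $\alpha(y_0)=0$) is right, so this is an acknowledged incompleteness rather than a wrong turn — but the claimed factorization would not survive an attempt to execute the plan.
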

Theorem \ref{sum_grIII} follows from the propositions in Sections \ref{Gr_III_i}, 
\ref{group3_reflected}, \ref{group3_contact}, and \ref{group3_vac}. The analysis in 
these sections provide further details about the location and intersection properties 
of the graphs $y=k(x)$ and $y=h(x)$; see Figures 8-14. When possible we also
give explicit expressions for $k(x)$ and $h(x)$.

\begin{figure}\label{interactns4}
	\centering
	\includegraphics[width=6.8cm,height=5.8cm]{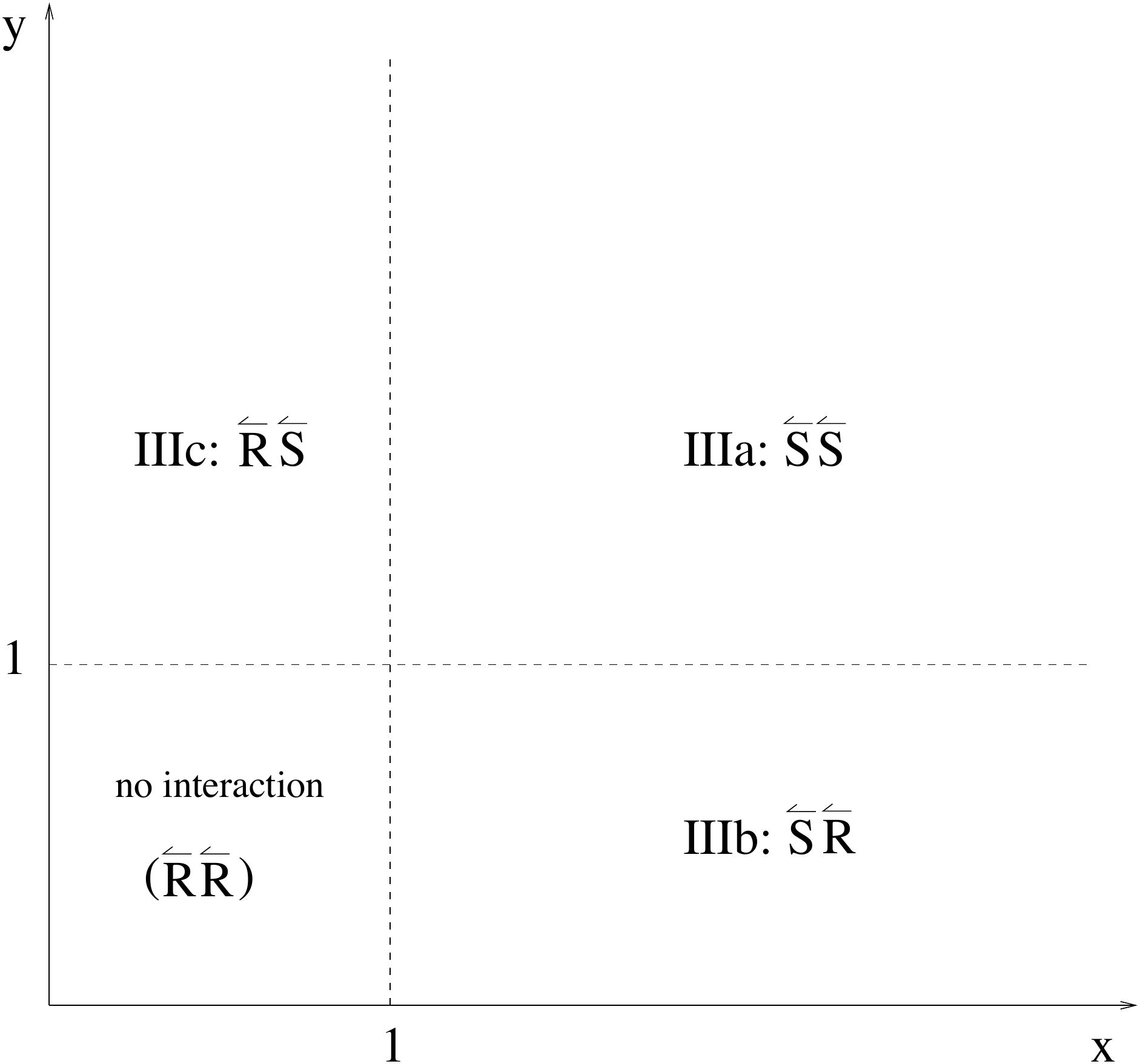}
	\caption{Group III interactions: incoming waves.}
\end{figure} 
%
%

\begin{figure}\label{interactns5}
	\centering
	\includegraphics[width=14cm,height=6.2cm]{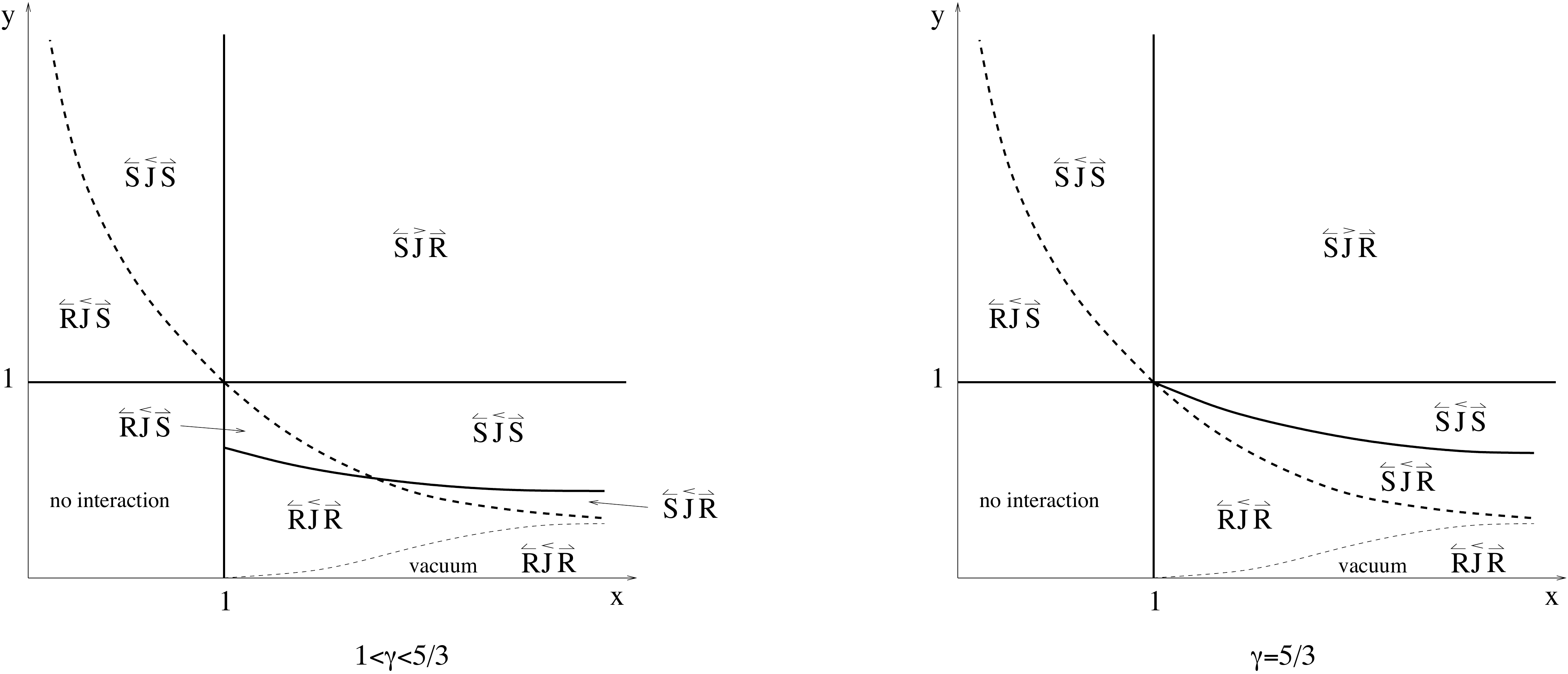}
	\caption{Group III interactions: outgoing waves when $1<\gamma<\frac{5}{3}$ 
	(left) and when $\gamma=\frac{5}{3}$ (right); schematic. For vacuum configurations, see Remark \ref{entr_jump}.}
\end{figure} 
%
%

\begin{figure}\label{interactns6}
	\centering
	\includegraphics[width=14cm,height=6.2cm]{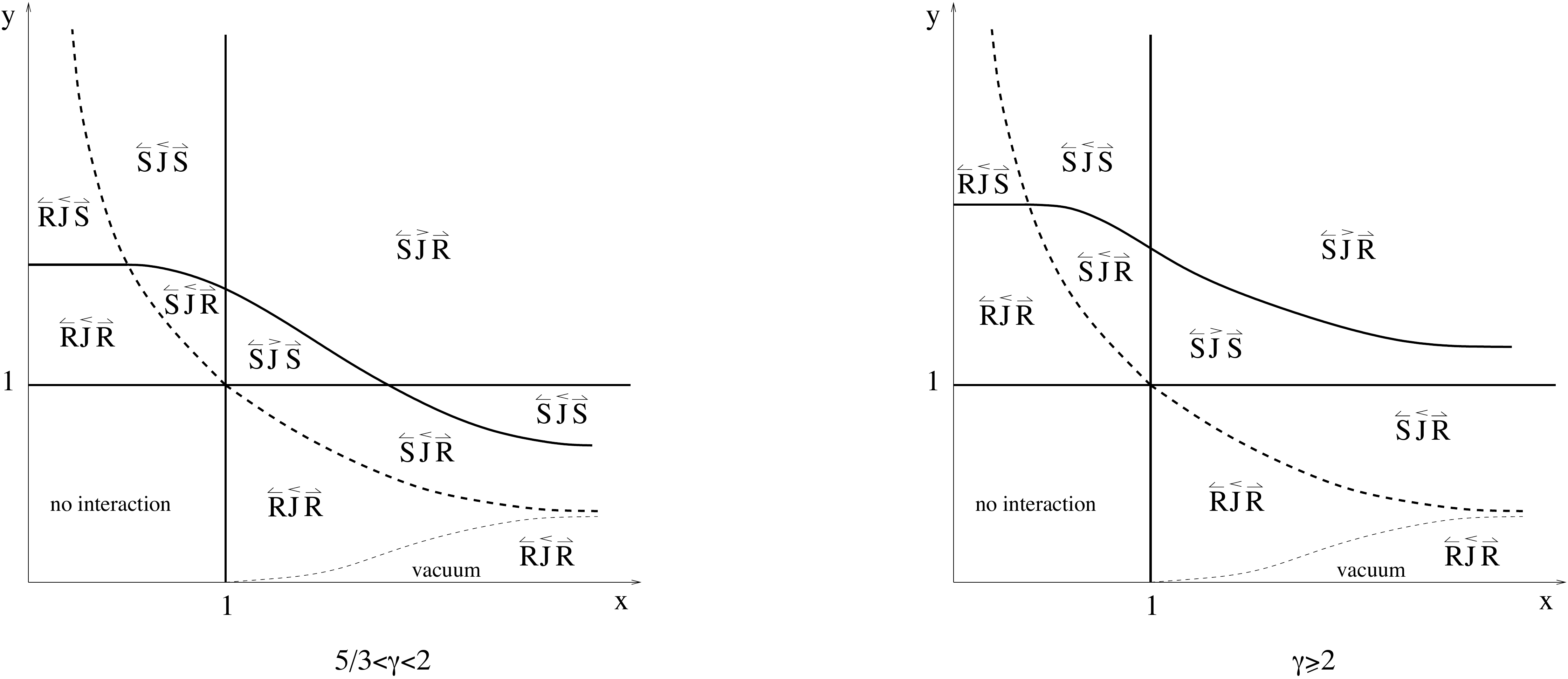}
	\caption{Group III interactions: outgoing waves when $\frac{5}{3}<\gamma<2$ (left) 
	and when $\gamma\geq 2$ (right); schematic. For vacuum configurations, see Remark \ref{entr_jump}.}
\end{figure} 

\begin{remark}
	Before starting on the proofs of Theorems  \ref{sum_grI},  \ref{sum_grII}, and  \ref{sum_grIII} 
	we note that the analysis will make use of a number of auxiliary functions and results about 
	their properties. These are for the most part collected in Section \ref{aux}.
\end{remark}

\section{Proof of Theorem \ref{sum_grI}}\label{Gr_I}

\subsection{Group I: head-on interactions} Consider the interactions in Group I 
listed in \eq{gr1}. Referring to the left diagram in Figure 1 we use 
\eq{bkwd_wave}-\eq{frwd_wave} to traverse the waves before and after interaction. 
We obtain three equations for the outgoing strengths $B$, $C$, $F$ in terms of the 
incoming strengths $b$ and $f$:
\bea
	\ba\phi(b)\fa\phi(f) &=& C\ba\phi(B)\fa\phi(F)\label{Iint1}\\
	\fa\psi(f) -\ba\psi(b)\sqrt{f\fa\phi(f)}&=& -\ba\psi(B)+\fa\psi(F)\sqrt{CB\ba\phi(B)}\label{Iint2}\\
	bf&=&BF\,.\label{Iint3}
\eea
Eliminating $F$ and $C$, using the relation \eq{rel3} and the definitions \eq{M}, \eq{N} of the 
auxiliary functions $M$ and $N$, we obtain the following nonlinear equation for $B=B(b,f)$:
\beq\label{IB}
	\cG(B;b,f)=0\,,
\eeq
where
\beq\label{G}
	\cG(B;b,f):= \ba\psi(B)+\ba\psi\big({\textstyle\frac{B}{bf}}\big)M(b)N(f)
	+\fa\psi(f)-N(f)\ba\psi(b)\,.
\eeq
We observe that since $\ba\psi$ is strictly increasing, the map $B\mapsto \cG(B;b,f)$
has the same property.

In what follows we first analyze the strengths of the outgoing backward and forward 
waves $B$ and $F$ and establish parts (i) and (ii) of Theorem \ref{sum_grI}.
This information is then used to prove the claims about the outgoing contact $C$. 
Finally we establish the claims about vacuum formation in head-on interactions.

\medskip

\subsection{Proof of Theorem \ref{sum_grI} part (i)}
By the defining equation \eq{IB} for $B=B(b,f)$ and the monotonicity of $B\mapsto \cG(B;b,f)$, 
we have by  \eq{M} that
\beq\label{IB_type2}
	B\gtrless 1\quad\Leftrightarrow\quad \cG(B;b,f)=0\gtrless  \cG(1;b,f) =N(f)g(b,f)\,,
\eeq
where $N(f)>0$ and 
\[g(b,f):=\ba\psi\big({\textstyle\frac{1}{bf}}\big)M(b)-\ba\psi(b)-\ba\psi\big(\textstyle\frac{1}{f}\big)\,.\]
Thus, to verify \eq{sumI1} we need to show that 
\beq\label{g1}
	g(b,f)\gtrless 0\quad\Leftrightarrow\quad  b\lessgtr 1\,.
\eeq
Since $g(1,f)\equiv 0$ it suffices to show that $\del_b g(b,f)<0$ for all $b,f>0$.
We have
\[\del_b g(b,f)=M'(b)\ba\psi\big({\textstyle\frac{1}{bf}}\big) 
- \frac{1}{b^2f}M(b)\ba\psi {'}\big({\textstyle\frac{1}{bf}}\big)-\ba\psi {'}(b)\,.\]
Using the properties of $M$ and $\ba\psi$ we get
\[\del_b g(b,f)<M'(b)\ba\psi\big({\textstyle\frac{1}{bf}}\big) \leq 0\qquad\text{whenever $bf\geq 1$.}\] 
On the other hand, if $bf< 1$ then
\[\del_b g(b,f)<M'(b)\ba\psi\big({\textstyle\frac{1}{bf}}\big) 
- \frac{1}{b^2f}M(b)\ba\psi {'}\big({\textstyle\frac{1}{bf}}\big)
=\frac{M(b)}{b}\ba\psi\big({\textstyle\frac{1}{bf}}\big)\left[m(b)-\ell\big({\textstyle\frac{1}{bf}}\big)\right]<0,\]
since $m(b)<\frac{1}{2}$ for all $b>0$, while $\ell(q)>\frac{1}{2}$ for all $q>1$ (see Section \ref{aux}). 
This proves \eq{sumI1}.

\medskip

\subsection{Proof of Theorem \ref{sum_grI} part (ii)}
Using \eq{Iint3} and the fact that $B\mapsto \cG(B;b,f)$ is increasing we have
\[F=\frac{bf}{B}\gtrless 1\quad\Leftrightarrow\quad 
B\lessgtr bf \quad\Leftrightarrow\quad 
0=\cG(B;b,f)\lessgtr G(b,f)\,.\]
where 
\[ G(b,f):=\cG(bf;b,f)=\ba\psi(bf)+\fa\psi(f)-N(f)\ba\psi(b)\,.\]
We have $G(b,1)\equiv 0$, while
\[\del_b G(b,f)= f\ba\psi {'}(bf)-N(f)\ba\psi {'}(b)\,,\qquad 
\del_f G(b,f)= b\ba\psi {'}(bf)+\fa\psi {'}(f)-N'(f)\ba\psi(b)\,.\]
Using the properties of  the auxiliary functions we see that each term in  $\del_f G(b,f)$ is 
non-negative whenever $0<b\leq 1$, and that their sum in this case is strictly positive. 
Thus, \eq{sumI2} holds for $0<b<1$. For $b\geq1$ we consider instead how $G$ changes 
along hyperbolas $bf=const$. Differentiating in the direction of increasing $f$ we have, 
for $b\geq 1$, that 
\bea
	(-b,f)\cdot\nabla_{(b,f)}G(b,f)&=& b\ba\psi {'}(b)N(f)+f\fa\psi {'}(f)-fN'(f)\ba\psi(b)\nn\\
	&>& b\ba\psi {'}(b)N(f)-fN'(f)\ba\psi(b)\nn\\
	&=& \ba\psi(b)N(f)[\ell(b)-n(f)]>0\label{dir_der}
\eea
by the properties of the auxiliary functions $\ell$ and $n$ recorded in Section \ref{aux}. 
Again, since $G(b,1)\equiv 0$, it follows that \eq{sumI2} holds in the set $(b,f)\in\{b\geq 1,\, bf\geq 1\}$.
We finally observe that  
\[G(1,f)=\ba\psi(f)+\fa\psi(f)< 0\quad\text{whenever} \quad f< 1\,,\]
which, together with \eq{dir_der}, verifies \eq{sumI2} also in $\{b\geq 1,\, bf<1\}$. 
This proves \eq{sumI2}.

\medskip

\subsection{Proof of Theorem \ref{sum_grI} part (iii)}
Concerning the outgoing contact $C$ in Group I interactions we first observe that $C=1$
in $\fa R\ba R$ (Ic) interactions. Indeed, by Remark \ref{entr_jmp},
if $C\neq 1$ then the entropy would take different values on either side of the outgoing contact.
However, we have already shown that the outgoing backward and forward waves are both 
rarefactions in this case, whence (Remark \ref{entr_jmp}) the entropy takes the same value on 
each side of the outgoing contact. 

For the other two interactions in Group I we shall use \eq{Iint1} together with the following:
\begin{lemma}\label{out_in}
	For Ia and Ib interactions with incoming strengths $b$, $f$, and outgoing strengths 
	$B$, $C$, $F$ (see Figure 1), the following holds:
	\begin{itemize}
		\item For Ia ($\fa S \ba S$) intersections: $f<F<1<B<b
		\quad\text{and}\quad B+F<b+f\,.$
		\item For Ib ($\fa S\ba R$) interactions: $F<f<1$.
	\end{itemize}
\end{lemma}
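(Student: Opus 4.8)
The plan is to reduce both assertions, via the pressure identity $bf=BF$ of \eq{Iint3}, to the single comparison of the outgoing backward strength $B$ with the incoming backward strength $b$, and then to settle that comparison by substituting $B=b$ into the strictly increasing function $\cG(\,\cdot\,;b,f)$ of \eq{G}.

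First I would record the data. For an Ia ($\fa S\ba S$) interaction one has $b>1$ and $0<f<1$, and parts (i)--(ii) of Theorem \ref{sum_grI} (already proven) give $B>1$ and $F<1$; for an Ib ($\fa S\ba R$) interaction one has $0<b<1$ and $0<f<1$ (only $f<1$ will be needed below). Since $BF=bf$, the inequality $B<b$ is equivalent to $F>f$; and in the Ia case
\[
B+F-(b+f)=(B-b)+\bigl(\tfrac{bf}{B}-f\bigr)=(B-b)\bigl(1-\tfrac{f}{B}\bigr),
\]
whose second factor is positive because $f<1<B$, so $B<b$ additionally forces $B+F<b+f$. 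Thus it remains only to prove $B<b$ in case Ia — which then yields the full chain $f<F<1<B<b$ together with the sum inequality — and $B>b$ in case Ib, which then gives $F=bf/B<f<1$.

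For the key comparison, note that $B$ is the unique root of $\cG(B;b,f)=0$ and that $B\mapsto\cG(B;b,f)$ is increasing, so $B\lessgtr b$ holds exactly when $\cG(b;b,f)\lessgtr 0$. Evaluating \eq{G} at $B=b$ and using the relation \eq{rel3} — which in the present notation reads $\fa\psi(f)=-N(f)\,\ba\psi(1/f)$ — the four terms collapse to
\[
\cG(b;b,f)=\ba\psi(b)\bigl(1-N(f)\bigr)+N(f)\,\ba\psi(1/f)\bigl(M(b)-1\bigr).
\]
At this stage only elementary sign facts from Section \ref{aux} are needed: $\ba\psi(q)\gtrless 0\Leftrightarrow q\gtrless 1$; $0<N(f)$ with $N(f)\lessgtr 1\Leftrightarrow f\lessgtr 1$; and $M(b)\gtrless 1\Leftrightarrow b\gtrless 1$. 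In an Ia interaction ($b>1>f$) each of $\ba\psi(b)$, $1-N(f)$, $N(f)$, $\ba\psi(1/f)$, $M(b)-1$ is strictly positive, so $\cG(b;b,f)>0$ and hence $B<b$. In an Ib interaction ($0<b<1$, $0<f<1$) one instead has $\ba\psi(b)<0$ and $M(b)-1<0$ while $1-N(f)$, $N(f)$, $\ba\psi(1/f)$ stay positive, so both summands are strictly negative, $\cG(b;b,f)<0$, and hence $B>b$. Together with the reduction, this proves the lemma.

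The argument has essentially no analytic content beyond this sign bookkeeping; the one step that is not a one-line check is the identity $\fa\psi(f)=-N(f)\,\ba\psi(1/f)$ — equivalently \eq{rel3} together with $N(f)=\sqrt{f\,\fa\phi(f)}$ — which is precisely what makes the four-term expression for $\cG(b;b,f)$ factor into a sum of two sign-transparent products. I would establish it once in Section \ref{aux}, checking the rarefaction branch $f>1$ and the shock branch $0<f<1$ separately. Without this cancellation one would be left estimating the raw terms $\ba\psi(b)(1-N(f))$, $\ba\psi(1/f)M(b)N(f)$, and $\fa\psi(f)$ directly, which is feasible but the only genuinely fussy part; so that identity is where I expect the real work to sit.
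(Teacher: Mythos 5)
Your proposal is correct and follows essentially the same route as the paper: both evaluate the increasing map $B\mapsto\cG(B;b,f)$ at $B=b$, use the identity $N(f)\ba\psi(1/f)=-\fa\psi(f)$ (which is just the definition \eq{N}, already recorded in Section \ref{aux}) to collapse $\cG(b;b,f)$ into the same two sign-transparent terms, and then transfer the comparison to $F$ via $bf=BF$. The only cosmetic difference is in the final step for $B+F<b+f$, where you factor $(B-b)(1-f/B)$ while the paper squares $b-f>B-F>0$; both are immediate.
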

\bp Consider first Ia interactions for which $f<1<b$. The properties of the auxiliary 
functions show that
\[\cG(b;b,f)=\ba\psi(b)\big[1-N(f)\big] + \fa\psi(f)\big[1-M(b)\big] >0=\cG(B;b,f).\]
As $\cG(\cdot;b,f)$ is increasing this shows that $b>B$, and since $bf=BF$ it follows
that $F>f$. Combining this with \eq{sumI1} and \eq{sumI2}, we obtain that $f<F<1<B<b$. 
But then $b-f>B-F>0$, whence $(b-f)^2>(B-F)^2$. Since $bf=BF$ it then follows that 
$(b+f)^2>(B+F)^2$, i.e.\ $B+F<b+f$.

Next, for Ib interactions the incoming strengths satisfy $b,\, f<1$.
As $bf=BF$, we have $f>F$ if and only if $B>b$, which is the case if and only if
\[\cG(B;b,f)=0>\cG(b;b,f)= \ba\psi(b)\big[1-N(f)\big] + \fa\psi(f)\big[1-M(b)\big]\,.\]
For $b,\, f<1$ both terms in  the expression on the right are negative. This shows 
that $f>F$ in Ib interactions.
\ep

For a Ia interaction we use $f<1<b$ in the explicit expressions for 
$\ba \phi$ and $\fa \phi$, together with \eq{Iint3}, to rewrite \eq{Iint1} as
\[C=\frac{A(b+f,bf)}{A(B+F,bf)}\,,\] 
where the function $A$ is defined in \eq{Axi}. We then use the property 
\eq{Aa_prop} of $A$ together with the estimate $B+F<b+f$ from Lemma \ref{out_in}, 
to obtain that $bf\gtrless 1$ if and only if $C\gtrless 1$.

Finally, in $\fa S\ba R$-interactions $f,\, b<1$, and by parts (i) and (ii) of Theorem  
\ref{sum_grI} we have $B,\, F<1$. Again, using the explicit expressions for $\ba \phi$ 
and $\fa \phi$, together with \eq{Iint3}, we  rewrite \eq{Iint1} as
\beq\label{SR_C1}
	C=\frac{D(f)}{D(F)}\,.
\eeq
where the function $D$ is defined in \eq{D}. Since $D$ is strictly increasing and 
$f>F$ by Lemma \ref{out_in}, we obtain that $C>1$ in $\fa S\ba R$ interactions.
This concludes the proof of part (iii) of Theorem \ref{sum_grI}.

\medskip

\subsection{Proof of Theorem \ref{sum_grI} part (iv)}
We argue as in Section \ref{rip_vac} and observe that the map $B\mapsto \cG(B;b,f)$ 
is strictly increasing. The interaction Riemann problem is vacuum-free if and only if
$B=B(b,f)>0$, or equivalently $\cG(B;b,f)=0>\cG(0;b,f)$, i.e.\
\beq\label{I_no_vac}
	\ba\psi(b)+\nu M(b)>\frac{\fa\psi(f)-\nu}{N(f)}\,.
\eeq
Using the properties of the auxiliary functions $\ba\psi$, $\fa\psi$ and $M$ (see 
Section \ref{aux}) we have:
\begin{itemize}
\item for Ia and Ib interactions, $f<1$ such that \eq{I_no_vac} is satisfied:
\[\lhs{I_no_vac} >\ba\psi(b)>-\nu>\ba\psi(f)-\nu>\rhs{I_no_vac};\]
\item for Ic interactions $b<1<f$ and \eq{I_no_vac} reduces to the condition that
\[b^\zeta+f^{-\zeta}>1\qquad\qquad \mbox{(no vacuum)}\,.\]
\end{itemize}
The ``vacuum curve'' $b^\zeta+f^{-\zeta}=1$ is sketched in the right diagram in Figure 2. 
This establishes part (iv) of Theorem \ref{sum_grI}.

\noindent
This concludes the proof of Theorem \ref{sum_grI}.

\section{Proof of Theorem \ref{sum_grII}}\label{Gr_II}

\subsection{Group II: interactions with a contact} Consider the interactions in Group II 
listed in \eq{gr2}. Referring to the middle diagram in Figure 1 we use 
\eq{bkwd_wave}-\eq{frwd_wave} to traverse the waves before and after interaction. 
We obtain three equations for the outgoing strengths $B$, $C$, $F$ in terms of the 
incoming strengths $f$ and $c$:
\bea
	c\fa\phi(f) &=& C\ba\phi(B)\fa\phi(F)\label{IIint1}\\
	\fa\psi(f) &=& -\ba\psi(B)+\fa\psi(F)\sqrt{CB\ba\phi(B)}\label{IIint2}\\
	f&=&BF\,.\label{IIint3}
\eea
Eliminating $F$ and $C$, using the relation \eq{rel3} and the definition \eq{N}, 
and rearranging, we obtain the following nonlinear equation for $B=B(f,c)$
\beq\label{IIB}
	\cH(B;f,c)=0
\eeq
where
\beq\label{H}
	\cH(B;f,c):= \ba\psi(B)+\sqrt{c}N(f)\ba\psi\big({\textstyle\frac{B}{f}}\big)+\fa\psi(f)\,.
\eeq
We observe that since $\ba\psi$ is strictly increasing, the map $B\mapsto \cH(B;f,c)$
has the same property.

We proceed to analyze the strengths of the outgoing backward and forward waves 
$B$ and $F$. This information is then used to verify the claims about the outgoing 
contact $C$ in Theorem \ref{sum_grII}. We then establish part (iv) of  Theorem 
\ref{sum_grII} concerning vacuum formation in interactions with a contact.
Finally, in Section \ref{IId_trans} we include a partial analysis of the location of the 
transition curve $\{C=1\}$ in the $(f,c)$-plane, and show that it is the graph 
of a non-monotone function $c\mapsto \mathfrak f(c)$ satisfying
\beq\label{f_lims}
	\lim_{c\downarrow 1}\mathfrak f(c)=\lim_{c\uparrow \infty}\mathfrak f(c)=+\infty\,.
\eeq

\medskip

\subsection{Proof of Theorem \ref{sum_grII} part (i)}
By the monotonicity of $B\mapsto \cH(B;f,c)$, the defining equation 
\eq{IIB} for $B=B(f,c)$, and the definition \eq{N} of the function $N$, we have
\[B\gtrless 1\quad \Leftrightarrow \quad 0\gtrless \cH(1;f,c)=(1-\sqrt{c})\fa\psi(f)\,.\]
Then, using the sign properties of $\fa\psi$, we obtain: for $\fa S J$ interactions we have $f<1$ 
and \eq{IIa and IIb} follows, while for an $\fa R J$ interactions we have $f>1$, and 
\eq{IIc and IId} follows. This verifies part (i) of Theorem \ref{sum_grII}.

\medskip

\subsection{Proof of Theorem \ref{sum_grII} part (ii)}
We observe that $\cH(f;f,c)=\ba\psi(f)+\fa\psi(f)$.
Thus, by \eq{IIint3}, the monotonicity of $\cH(\cdot;f,c)$, and the defining equation 
\eq{IIB} for $B=B(f,c)$, we have
\[F\gtrless 1\quad \Leftrightarrow \quad f\gtrless B
\quad \Leftrightarrow \quad \cH(f;f,c)\gtrless 0
\quad \Leftrightarrow \quad \ba\psi(f)+\fa\psi(f)\gtrless 0
\quad \Leftrightarrow \quad f\gtrless 1\,.\]
The last equivalence follows from the properties of the auxiliary functions
$\ba\psi$ and $\fa\psi$. This verifies part (ii) of Theorem \ref{sum_grII}.

\medskip

\subsection{Proof of Theorem \ref{sum_grII} part (iii)}
As for Group I, in order to determine the type of the transmitted contact
we will consider each interaction separately. The outgoing ratio $C$
occurs in both \eq{IIint1} and \eq{IIint2}, and we will make use of both relations.

\medskip
\paragraph{\bf Case IIa} ($\fa S\jdown$) In this case $f,\, c<1$, and from the analysis 
above we have that $F<1<B$. Thus $f=BF>F$, while a direct evaluation shows that 
$B\ba\phi(B)>1$. As $\fa\psi$ is increasing we have $0>\fa\psi(f)>\fa\psi(F)$, and it follows that
\[\fa\psi(f)+\ba\psi(B)>\fa\psi(F)>\fa\psi(F)\sqrt{B\ba\phi(B)}\,.\]
Comparing with \eq{IIint2} shows that $C<1$ in this case. We further claim that $C>c$, which, 
according to \eq{IIint1}, is the case if and only if 
\[\fa\phi(f)\equiv \fa\phi(BF)>\ba\phi(B)\fa\phi(F)\,,\]
which is equivalent to $E(BF)>E(B)E(F)$, where the function $E$ is defined in \eq{E}.
We apply part (a) of Lemma \ref{E_lem} and conclude that this last inequality is 
indeed satisfied since $BF=f<1$.

\medskip
\paragraph{\bf Case IIb} ($\fa S\jup$) In this case $f<1<c$, and from the analysis 
above we have that $F,\, B<1$. Thus $f=BF<F$ and the properties of the auxiliary 
functions $\fa\psi$ and $\ba\phi$ show that 
\[\fa\psi(f)+\ba\psi(B)<\fa\psi(f)<\fa\psi(F)<\fa\psi(F)B^\zeta=\fa\psi(F)\sqrt{B\ba\phi(B)}\,.\]
A comparison with \eq{IIint2} shows that $C>1$. We further claim that $C<c$, which, 
according to \eq{IIint1}, is the case if and only if 
\[\fa\phi(f)\equiv \fa\phi(BF)<\ba\phi(B)\fa\phi(F)\,.\]
Using the functions $D$, $E$ (defined in \eq{D} and \eq{E}) we rewrite this as
\beq\label{C_less_c}
	E(BF)<\frac{E(B)E(F)}{D(B)}\,.
\eeq
Now, as $B,\, F<1$, part (b) of Lemma \ref{E_lem} shows that $E(BF)<E(B)E(F)$.
At the same time $D$ is a strictly increasing function with $D(1)=1$, such that 
$D(B)<1$. It follows that \eq{C_less_c} is satisfied and $C<c$.

\medskip
\paragraph{\bf Case IIc} ($\fa R\jdown$) In this case $c<1<f$, whence, by the analysis
above, $B<1<F$. This means that the backward and forward outgoing waves are
both rarefactions, and it follows from Remark \ref{entr_jmp} that $C=c$ in this case.

\medskip
\paragraph{\bf Case IId} ($\fa R\jup$) In this case $c,\, f>1$, and the earlier analysis 
shows that $1<F,\, B<f$. 
Using the explicit expressions for $\fa\phi$ and $\ba\phi$ in 
\eq{IIint1}, together with \eq{IIint3}, we get
\beq\label{IId_Cc}
	C=\frac{\fa\phi(f)c}{\fa\phi(F)\ba\phi(B)}=\frac{c}{D(B)}\,,
\eeq
where $D$ is defined in \eq{D}. Since $B>1$, $D(1)=1$, and $D$ is strictly increasing, 
it follows from \eq{IId_Cc} that $C<c$ in IId ($\fa R\jup$) interactions.

Next we want to verify that $C$ may be either $\gtrless 1$ depending on the 
incoming strengths $f$ and $c$. First, as $B,\, F>1$ and $BF=f$, it follows 
from the defining relation $\cH(B;f,c)=0$ and the explicit expressions for 
$N$, $\fa\psi$ and $\ba\psi$, that
\beq\label{Bfc}
	(\sqrt{c}-1)f^\zeta +1= \sqrt{c}B^\zeta +\frac{\ba\psi(B)}{\nu}\,.
\eeq
The right-hand side is strictly increasing with respect to $B$, such that this relation 
provides $B$ as a function of $f$ and $c$: 
\[B=\mathcal B(f,c)=\text{unique $B$-value satisfying \eq{Bfc} for given $f,\, c>1$.}\] 
By substituting $B=\mathcal B(f,c)$ into \eq{IId_Cc} we obtain the strength
$C=\mathcal C(f,c)$ of the outgoing contact. We now have:
\begin{lemma}\label{IId_C}
	Consider an $\fa R\jup$ interaction with incoming strengths $f,\, c>1$. Then, as 
	$f$ increases from $1$ to $\infty$ while $c>1$ is kept fixed, the strength $C=\mathcal C(f,c)$ 
	of the outgoing contact (determined from \eq{Bfc} and \eq{IId_Cc}) decreases from $c$ 
	to $0$.
\end{lemma}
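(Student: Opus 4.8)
The statement to prove is Lemma \ref{IId_C}: in an $\fa R\jup$ interaction with $f,c>1$, as $f\uparrow\infty$ with $c$ fixed, the contact strength $C=\mathcal C(f,c)$ decreases monotonically from $c$ (at $f=1$) to $0$ (as $f\to\infty$). By \eq{IId_Cc} we have $C = c/D(B)$ with $B=\mathcal B(f,c)$ the unique root of \eq{Bfc}, and $D$ strictly increasing with $D(1)=1$. So the whole lemma reduces to understanding the function $f\mapsto \mathcal B(f,c)$: I claim it is strictly increasing and that $\mathcal B(1,c)=1$, $\mathcal B(f,c)\to\infty$ as $f\to\infty$. Then $D(\mathcal B(f,c))$ increases from $1$ to $\infty$, so $C=c/D(B)$ decreases from $c$ to $0$, which is exactly the assertion.

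\textbf{Step 1: Endpoint values.} Setting $f=1$ in \eq{Bfc} gives $(\sqrt c-1)+1 = \sqrt c\,B^\zeta + \ba\psi(B)/\nu$, i.e.\ $\sqrt c = \sqrt c\, B^\zeta + \ba\psi(B)/\nu$; since $\ba\psi(1)=0$ and $1^\zeta=1$, $B=1$ solves this, and by strict monotonicity of the right-hand side in $B$ (noted right after \eq{Bfc}) it is the unique solution, so $\mathcal B(1,c)=1$ and $C=c/D(1)=c$. For $f\to\infty$: the left-hand side of \eq{Bfc} is $(\sqrt c-1)f^\zeta+1\to\infty$; the right-hand side is continuous and strictly increasing in $B$ on $(1,f)$, and I need to check it is unbounded as $B$ ranges over the admissible interval — here I use that $\ba\psi(B)/\nu \to \infty$ as $B\to\infty$ (the $\ba S$ branch $\kappa(B-1)/(\nu\sqrt{B+a})$, together with $B^\zeta\to\infty$). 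Since $B<f$ and both sides are forced to match, and the LHS blows up like $f^\zeta$ while at $B=f$ the RHS is $\sqrt c\, f^\zeta + \ba\psi(f)/\nu > (\sqrt c -1)f^\zeta + 1$ for large $f$, a squeeze/intermediate-value argument pins $\mathcal B(f,c)\to\infty$. (One should record that $\mathcal B(f,c)\in(1,f)$ throughout, consistent with $1<B<f$ from the earlier analysis of Case IId.)

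\textbf{Step 2: Monotonicity in $f$.} Differentiate \eq{Bfc} implicitly with respect to $f$ at fixed $c$. Writing $\Phi(B):=\sqrt c\, B^\zeta + \ba\psi(B)/\nu$ for the right-hand side and $\Lambda(f):=(\sqrt c -1)f^\zeta + 1$ for the left-hand side, we get $\Lambda'(f) = \Phi'(B)\, \partial_f B$. Now $\Lambda'(f) = (\sqrt c -1)\zeta f^{\zeta-1} > 0$ since $c>1$, and $\Phi'(B) = \sqrt c\,\zeta B^{\zeta-1} + \ba\psi'(B)/\nu > 0$ since $\ba\psi$ is strictly increasing. Hence $\partial_f B = \Lambda'(f)/\Phi'(B) > 0$, so $f\mapsto \mathcal B(f,c)$ is strictly increasing. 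Combined with Step 1, $\mathcal B(f,c)$ increases strictly from $1$ to $\infty$ as $f$ goes from $1$ to $\infty$.

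\textbf{Step 3: Conclusion.} Since $D$ is strictly increasing with $D(1)=1$ and $D(B)\to\infty$ as $B\to\infty$ (this last limit should be verified from the definition \eq{D} of $D$, or from the already-established divergence of $C$'s complement), the composition $f\mapsto D(\mathcal B(f,c))$ increases strictly from $1$ to $\infty$. Therefore $C = \mathcal C(f,c) = c/D(\mathcal B(f,c))$ decreases strictly from $c/1 = c$ down to $0$, and by the intermediate value theorem takes every value in $(0,c)$ exactly once. This also delivers the claim in part (iii) of Theorem \ref{sum_grII} that the outgoing contact in IId interactions can be $\jup$ ($C>1$) or $\jdown$ ($C<1$).

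\textbf{Anticipated main obstacle.} The differentiations in Step 2 are routine; the only real care is needed in Step 1 for the limit $f\to\infty$ — specifically, confirming that the root $\mathcal B(f,c)$ of \eq{Bfc} genuinely escapes to $\infty$ rather than stabilizing, which requires the growth rates of $\ba\psi(B)$ and of $\Lambda(f)$ to be compared, and confirming $D(B)\to\infty$. These are facts about the auxiliary functions collected in Section \ref{aux}, so the proof is essentially bookkeeping with those properties; I would cite the relevant monotonicity and asymptotic statements there rather than re-derive them.
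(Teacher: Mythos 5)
Your proposal is correct and follows essentially the same route as the paper: reduce via $C=c/D(B)$ to showing that $f\mapsto\mathcal B(f,c)$ increases from $1$ to $\infty$, which the paper gets by noting that the left side of \eq{Bfc} increases from $1$ to $\infty$ in $f$ while the right side does so in $B$ (your implicit differentiation is just a quantitative version of the same monotonicity argument). The endpoint checks and the limit $D(B)\uparrow\infty$ you flag are exactly the auxiliary facts the paper invokes from Section \ref{aux}.
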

\bp
	Recall that $B\mapsto D(B)$ increases from $1$ to $\infty$ as $B$ increases 
	from $1$ to $\infty$. Thus the claim follows from the expression \eq{IId_Cc} for $C$
	provided the map $f\mapsto \cB(f,c)$, for fixed $c>1$, increases from 
	$1$ to $\infty$ as $f$ increases from $1$ to $\infty$. Now, $\cB(f,c)$ is the unique 
	root of \eq{Bfc}. For fixed $c>1$ the left-hand side of \eq{Bfc} increases from $1$ to 
	$\infty$ as $f$ increases from $1$ to $\infty$. At the same time the right-hand side 
	of \eq{Bfc} increases from $1$ to $\infty$ as $B$ increases from $1$ to $\infty$. 
	This shows that $f\mapsto \cB(f,c)$ increases 
	from $1$ to $\infty$ as $f$ increases from $1$ to $\infty$.
\ep
This establishes part (iii) of Theorem \ref{sum_grII}.

\medskip

\subsection{Proof of Theorem \ref{sum_grII} part (iv)}
We argue as in Section \ref{rip_vac} and observe that the map $B\mapsto \cH(B;f,c)$ 
is strictly increasing. The interaction Riemann problem is vacuum-free if and only if
$B=B(f,c)>0$, or equivalently $\cH(B;f,c)=0>\cH(0;f,c)$, i.e.
\beq\label{II_no_vac}
	\nu\big[1+\sqrt{c}N(f)\big] - \fa\psi(f)>0\,.
\eeq
\begin{itemize}
	\item IIa and IIb: for an $\fa S J$ interaction we have $f<1$, such that 
	$\lhs{II_no_vac}>0$. Thus no vacuum occurs in $\fa S J$ interactions.
	\item IIc and IId: for $\fa R J$ interactions we have $f>1$. Using the explicit
	expressions for the functions $\ba \psi$, $\fa\psi$, $N$, we get that no vacuum 
	occurs if and only if 
	\beq\label{IIcd_no_vac}
		(1-\sqrt{c})f^\zeta<2\,.
	\eeq
	For IId interactions $c>1$, \eq{IIcd_no_vac} is satisfied, and no vacuum 
	occurs. On the other hand, for IIc interactions we have $c<1$, and no vacuum 
	appears if and only if $f<f^*(c)$, where $f^*(c)$ is defined in \eq{IIc_vac}.
\end{itemize}

\noindent
This concludes the proof of Theorem \ref{sum_grII}.

\medskip

\subsection{Location of the transition curve $\{C=1\}$ for IId interactions}\label{IId_trans}
We now want to determine more precisely the ``transition" curve in the $(f,c)$-plane 
across which the outgoing contact changes type IId interactions. As above we denote 
the outgoing strength of the contact by $\mathcal C(f,c)$. By Lemma \ref{IId_C} 
we know that for each $c>1$ there is a unique $f$-value $\mathfrak f(c)$ such that 
$\mathcal C(\mathfrak f(c),c)=1$. We derive an expression for $\mathfrak f(c)$ as 
follows. First, from \eq{IId_Cc} with $C=1$, we get that
\[c=D(B)\,.\]
Let $\delta:=D^{-1}$ such that 
\[B=\delta(c)=\text{outgoing backward strength when no outgoing contact occurs.}\]
Substituting this value for $B$ into \eq{Bfc} then yields the corresponding value of 
the incoming forward wave $f$, i.e.\ $\mathfrak f(c)$:
\[\mathfrak f(c)=\left[\delta(c)^\zeta+\frac{\fa\psi(\delta(c))
+\ba\psi(\delta(c))}{\nu(\sqrt{c}-1)}\right]^\frac{1}{\zeta}\,,\]
where we have made use of the explicit expressions for $\fa\psi$.
It turns out that this is a non-monotone function. Figure 7 shows the graph of the function 
\[\Upsilon(B):=B^\zeta+\frac{\fa\psi(B)+\ba\psi(B)}{\nu(\sqrt{D(B)}-1)}\]
when $\gamma=\frac{4}{3}$. (The plot shows the same qualitative features for 
other values of $\gamma$.) As $c\mapsto \delta (c)$ is strictly increasing it follows that 
$c\mapsto \mathfrak f(c)$ is also non-monotone. In particular, the plot indicates that 
$\mathfrak f(c)$ has a unique minimum. Finally, a direct evaluation 
verifies \eq{f_lims}. 
\begin{figure}\label{upsilon}
	\centering
	\includegraphics[width=7.5cm,height=6.5cm]{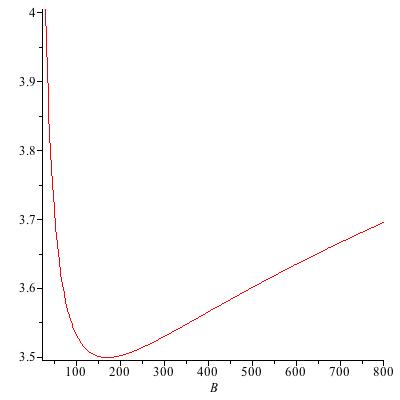}
	\caption{Graph of $\Upsilon(B)$ ($\gamma=\frac{4}{3}$)}
\end{figure}
\begin{remark}\label{C=1_curve}
	We have not proved that $\mathfrak f(c)$ has a unique minimum, nor
	determined its absolute minimum. In particular, we have not determined 
	whether its absolute minimum is smaller or larger than $f^*(0)=2^{1/\zeta}$.
	Figure 3 should be read with these provisions in mind. 
\end{remark}

\section{Proof of Theorem \ref{sum_grIII}: setup and part $\mathrm{(i)}$}\label{Gr_III_i}
The analysis of interactions involving overtaking waves is more involved. From Theorems
\ref{sum_grI} and \ref{sum_grII} we see that most outcomes in Groups I and II interactions
are independent of the adiabatic constant $\gamma$. Indeed, only the transition curve 
for the outgoing contact $C$ in IId interactions, as well as the transition curves for vacuum 
in Ic and IIc interactions, depend explicitly on the adiabatic constant. Also, these dependencies 
are ``stable" in the sense that the transition curves are present and qualitatively similar for 
all values of $\gamma>1$. The situation for Group III interactions is markedly different.

We consider the pairwise interactions of overtaking backward waves of strengths $x$ 
and $y$ (see right diagram in Figure 1). It will turn out that the location and properties of 
the transition curve for the reflected (forward) wave, i.e.\ the locus $\{F(x,y)=1\}$, depends 
sensitively on $\gamma$. At the same time, the transition curve of the outgoing backward 
wave, viz.\ $\{B(x,y)=1\}$, as well as the vacuum transition 
curve, both depend on $\gamma$. However, in the latter cases the dependence is stable
in the above sense. The situation is depicted in Figures 5 and 6.

For reference we  introduce the following three regions delimited by the lines $x=1$ and 
$y=1$ in the first quarter of the $(x,y)$-plane:
\begin{itemize}
	\item{} IIIa $=\{x,\, y>1\}$, corresponding to $\ba S\ba S$-interactions\\
	\item{} IIIb $=\{x>1>y>0\}$, corresponding to $\ba S\ba R$-interactions\\
	\item{} IIIc $=\{y>1>x>0\}$, corresponding to $\ba R\ba S$-interactions.
\end{itemize}
(The region $\{x,\, y<1\}$ corresponds to two backward rarefaction waves, which do not meet.)

In this section we determine  the type of transmitted (backward) wave in Group III interactions. 
Sections \ref{group3_reflected} and \ref{group3_contact} provide the analysis of the reflected 
(forward) and contact waves, respectively, while Section \ref{group3_vac} gives the conditions 
for vacuum formation in overtaking interactions. Together these results establish parts (i)-(iv)
of Theorem \ref{sum_grIII}.

\subsection{Group III: overtaking interactions} 
Consider the interactions in Group III listed in \eq{gr3}. Referring to the right diagram in 
Figure 1 we use \eq{bkwd_wave}-\eq{frwd_wave} to traverse the waves before and after 
interaction. We obtain three equations for the outgoing strengths $B$, $C$, $F$ in terms of the 
incoming strengths $x$ and $y$:
\bea
	\ba\phi(x)\ba\phi(y)  &=& C\ba\phi(B)\fa\phi(F)\label{IIIint1}\\
	\ba\psi(x)+\ba\psi(y)\sqrt{x\ba\phi(x)} &=& \ba\psi(B)-\fa\psi(F)\sqrt{CB\ba\phi(B)}\label{IIIint2}\\
	xy&=&BF\,.\label{IIIint3}
\eea
Eliminating $F$ and $C$, using \eq{rel3} and \eq{M}, and rearranging, yield the following 
nonlinear equation for $B=B(x,y)$
\beq\label{IIIB}
	\cK(B;x,y)=0\,,
\eeq
where
\beq\label{cK}
	\cK(B;x,y):= \ba\psi(B)+\ba\psi\big({\textstyle\frac{B}{xy}}\big)M(x)M(y)-\ba\psi(x)-\ba\psi(y)M(x)\,.
\eeq
We observe that since $\ba\psi$ is strictly increasing, the map $B\mapsto \cK(B;x,y)$
has the same property.

\subsection{Proof of Theorem \ref{sum_grIII} part (i)}\label{part1_grIII}
By the monotonicity of $B\mapsto \cK(B;x,y)$, the defining equation 
\eq{IIIB} for $B=B(x,y)$, and the definition \eq{M} of the function $M$, we have
\[B\gtrless 1\quad \Leftrightarrow \quad 0\gtrless \cK(1;x,y)=M(x)K(x,y)\,,\]
where
\beq\label{K}
	K(x,y):=\ba\psi\big({\textstyle\frac{1}{xy}}\big)M(y)
	+\fa\psi\big({\textstyle\frac{1}{x}}\big)-\ba\psi(y)\,.
\eeq
We are not able in general to solve the equation $K(x,y)=0$ explicitly for $x$ or $y$. 
Instead we determine the properties of the zero-level of $K$ by estimating the
partials $\del_xK$, $\del_yK$ in Proposition \ref{III_K_partials}. We also determine 
the relative locations and intersections of $\{K(x,y)=0\}$ with the hyperbola $xy=1$, 
see Proposition \ref{III_K_prop2}. 
This information will be used in Section \ref{group3_reflected} and depends on $\gamma$.
\begin{proposition}\label{III_K_partials}
	The partials of the function $K(x,y)$ defined in \eq{K} satisfy
	\beq\label{K_partials}
		\del_x K(x,y)<0\,,\qquad\del_y K(x,y)<0\qquad\quad \mbox{for all $x,y>0$}\,.
	\eeq
\end{proposition}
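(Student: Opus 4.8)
The plan is to differentiate $K(x,y)=\ba\psi\big(\tfrac{1}{xy}\big)M(y)+\fa\psi\big(\tfrac{1}{x}\big)-\ba\psi(y)$ directly in each variable and show that the resulting expressions are manifestly negative, using the monotonicity and convexity/sign properties of the auxiliary functions $\ba\psi$, $\fa\psi$, $M$ (and the logarithmic-derivative functions $m$, $\ell$, $n$) collected in Section~\ref{aux}. Since $K$ is symmetric neither in $x$ nor $y$ (the variable $x$ enters through both $\tfrac{1}{xy}$ and $\tfrac{1}{x}$, while $y$ enters through $\tfrac{1}{xy}$, $M(y)$, and $\ba\psi(y)$), the two partials require somewhat different handling, and I would treat them in order of increasing difficulty.

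First I would compute $\del_x K$. Writing $q=\tfrac{1}{xy}$ and $r=\tfrac{1}{x}$, we have
\[
\del_x K(x,y)= -\tfrac{1}{x^2 y}\,\ba\psi{'}(q)\,M(y) - \tfrac{1}{x^2}\,\fa\psi{'}(r)\,,
\]
and since $\ba\psi{'}>0$, $\fa\psi{'}>0$, and $M(y)>0$ for all positive arguments (Section~\ref{aux}), every term is strictly negative; hence $\del_x K<0$ with no case analysis. The work is in $\del_y K$:
\[
\del_y K(x,y)= -\tfrac{1}{x y^2}\,\ba\psi{'}\big(\tfrac{1}{xy}\big)\,M(y) + \ba\psi\big(\tfrac{1}{xy}\big)\,M'(y) - \ba\psi{'}(y)\,.
\]
The plan here is to bound the middle term. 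The sign of $M'$ and the sign of $\ba\psi(\tfrac{1}{xy})$ (which changes as $xy$ crosses $1$) force a split into the cases $xy\ge 1$ and $xy<1$, exactly paralleling the argument already used in Section~\ref{Gr_I} for $\del_b g(b,f)$. When $xy\ge 1$ we have $\ba\psi(\tfrac{1}{xy})\le 0$ while $M'(y)$ has a known sign, so the middle term is $\le 0$ (or can be absorbed), and the remaining two terms are strictly negative. When $xy<1$ we have $\ba\psi(\tfrac{1}{xy})>0$; here I would combine the first and second terms by factoring out $\tfrac{M(y)}{y}\,\ba\psi(\tfrac{1}{xy})$ to get something of the form $\tfrac{M(y)}{y}\,\ba\psi(\tfrac{1}{xy})\big[m(y)-\ell(\tfrac{1}{xy})\big] - \ba\psi{'}(y)$, and then invoke the bounds $m(y)<\tfrac12$ for all $y>0$ and $\ell(q)>\tfrac12$ for all $q>1$ (Section~\ref{aux}, as cited in the Group~I proof) to conclude the bracket is negative, so $\del_y K<0$.

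I expect the main obstacle to be the $\del_y K$ computation in the regime $xy<1$: getting the algebraic identity that lets one factor the first two terms into the $\tfrac{M(y)}{y}\,\ba\psi(\cdot)\,[m-\ell]$ shape requires using the precise definitions of $m$ and $\ell$ as logarithmic derivatives (i.e.\ $m(b)=b\,M'(b)/M(b)$ and $\ell(q)=q\,\ba\psi{'}(q)/\ba\psi(q)$, up to the conventions of Section~\ref{aux}), and one must make sure the $-\ba\psi{'}(y)$ term is not needed to close the estimate (it only helps). If instead $M'(y)$ turns out to be negative for the relevant range of $y$, the argument is even easier since then all three terms in $\del_y K$ are individually negative; either way the conclusion $\del_y K<0$ holds. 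Once both inequalities \eqref{K_partials} are in hand the proposition is complete.
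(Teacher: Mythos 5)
Your proposal is correct and follows essentially the same route as the paper's proof: direct differentiation, the immediate negativity of $\del_x K$, and a case split at $xy=1$ for $\del_y K$, using $M'>0$ together with $\ba\psi(\frac{1}{xy})\le 0$ when $xy\ge 1$, and the bounds $m(y)<\frac12<\ell(\frac{1}{xy})$ when $xy<1$ (your factoring of the first two terms as $\frac{M(y)}{y}\ba\psi(\frac{1}{xy})[m(y)-\ell(\frac{1}{xy})]$ is exactly the paper's inequality after dividing by $\frac{M(y)}{y}$). No gaps; the hedge about the sign of $M'$ is unnecessary since $M$ is increasing by Section~\ref{aux}.
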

\bp
	As $\ba\psi$, $\fa\psi$ are strictly increasing we have
	\beq\label{Kx}
		\del_x K(x,y)=-{\textstyle\frac{1}{x^2y}}\ba\psi {'}\big({\textstyle\frac{1}{xy}}\big)M(y)
		-{\textstyle\frac{1}{x^2}}\fa\psi {'}\big({\textstyle\frac{1}{x}}\big)<0\,.
	\eeq
	On the other hand
	\beq\label{Ky}
		\del_y K(x,y)=-{\textstyle\frac{1}{xy^2}}\ba\psi {'}\big({\textstyle\frac{1}{xy}}\big)M(y)
		+\ba\psi\big({\textstyle\frac{1}{xy}}\big)M'(y)-\ba\psi {'}(y)\,,
	\eeq
	such that $\del_y K(x,y)<0$ if and only if
	\beq\label{K_partials_2}
		{\textstyle\frac{1}{xy}}\ba\psi {'}\big({\textstyle\frac{1}{xy}}\big)
		-\ba\psi\big({\textstyle\frac{1}{xy}}\big)m(y)+\frac{y\ba\psi {'}(y)}{M(y)}>0\,,
	\eeq
	where the function $m$ is analyzed in Section \ref{aux}. 
	By the properties of the auxiliary functions the last inequality is trivially satisfied
	whenever $xy\geq 1$. For $xy<1$ we use instead that $\ba\psi {'}>0$ such that \eq{K_partials_2}
	follows provided 
	\[{\textstyle\frac{1}{xy}}\ba\psi {'}\big({\textstyle\frac{1}{xy}}\big)
	>\ba\psi\big({\textstyle\frac{1}{xy}}\big)m(y)\,.\]
	For $xy<1$ the latter inequality is equivalent to 
	\[\ell\big({\textstyle\frac{1}{xy}}\big) >m(y)\,,\]
	where the auxiliary function $\ell$ is analyzed in Section \ref{aux}. In particular, $\ell$ 
	satisfies $\ell(q)>\frac{1}{2}$ for $q>1$, while $m(y)<\frac{1}{2}$ for all $y>0$. This shows 
	that $\del_y K(x,y)<0$ for all $x,\,y>0$.
\ep
It follows that $\{B=1\}=\{K(x,y)=0\}$ is given by a graph $y=k(x)$, for a $C^2$-smooth and strictly 
decreasing function $k(x)$. 
A calculation shows that:
\[\lim_{y\downarrow 0} K(x,y) =\infty,\,\, \lim_{y\uparrow \infty} K(x,y) =-\infty\quad\mbox{for $x>0$},
\quad \lim_{x\downarrow 0} K(x,y) =\infty\quad\mbox{for $y>0$},\]
and
\[\lim_{x\uparrow \infty} K(x,y) =K_\infty(y):= -\nu M(y)-\ba\psi(y)-\frac{\kappa}{\sqrt{a}}\quad\mbox{for $y>0$.}\]
The function $K_\infty(y)$ is a strictly decreasing function and satisfies
\[K_\infty(0)=\nu-\frac{\kappa}{\sqrt{a}}>0\,,\qquad \lim_{y\uparrow\infty} K_\infty(y)=-\infty\,,\]
and has a unique root $y=\hat y(\gamma)\in(0,1)$ (given in \eq{yhat}). 
This concludes the proof of part (i) of Theorem \ref{sum_grIII}. 

Before considering the outgoing forward wave $F$ we need to analyze the relative 
positions and intersections of the curves $\{xy=1\}$ and $\{B=1\}=\{K(x,y)=0\}$. These depend 
on $\gamma$ and are given by the properties of the function $\alpha:=\fa\psi-\ba\psi$ which 
is analyzed in Section \ref{alfa}. 
\begin{proposition}\label{III_K_prop2}
	Let $\{y=k(x)\}=\{B=1\}$ be as in part (i) of Theorem \ref{sum_grIII}. Let $y_0=y_0(\gamma)$ denote the 
	root different from $1$ of the function $\alpha(y,a)$ when $\gamma\neq\frac{5}{3}$, and set $y_0=1$
	when $\gamma=\frac{5}{3}$. Let $x_0=\frac{1}{y_0}$.
	The relative positions and intersections of the curves $\{xy=1\}$ and $\{y=k(y)\}$ are then given as follows:
	\begin{itemize}
		\item[(a)] For $1<\gamma<\frac{5}{3}$ (see Figure 8): $y_0<1<x_0$ and 
			\begin{itemize}
				\item[$\bullet$] the curves intersect at $(1,1)$ tangentially and at $(x_0,y_0)$ transversally, 
				and only at these points
				\item[$\bullet$]  $\frac{1}{x}<k(x)$ for $x\in (x_0,\infty)$
				\item[$\bullet$]  $\frac{1}{x}>k(x)$ for $x\in (0,1)\cup (1,x_0)$
			\end{itemize}
		\item [(b)] For $\gamma=\frac{5}{3}$ (see Figure 9): $y_0=x_0=1$ and
			\begin{itemize}
				\item[$\bullet$]  the curves intersect only at $(1,1)$ (tangentially)
				\item[$\bullet$]  $\frac{1}{x}<k(x)$ for $x\in (1,\infty)$
				\item[$\bullet$]  $\frac{1}{x}>k(x)$ for $x\in (0,1)$
			\end{itemize}
		\item[(c)] For $\gamma>\frac{5}{3}$ (see Figure 10): $x_0<1<y_0$ and
			\begin{itemize}
				\item[$\bullet$]  the curves intersect at $(1,1)$ tangentially and at $(x_0,y_0)$ transversally, 
				and only at these points
				\item[$\bullet$]  $\frac{1}{x}<k(x)$ for $x\in (x_0,1)\cup (1,\infty)$
				\item[$\bullet$]  $\frac{1}{x}>k(x)$ for $x\in (0,x_0)$.
			\end{itemize}
	\end{itemize}
\end{proposition}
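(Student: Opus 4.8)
The plan is to reduce everything to one identity: \emph{on the hyperbola $\{xy=1\}$ the function $K$ of \eq{K} equals $\alpha:=\fa\psi-\ba\psi$.} First I would substitute $y=1/x$ in \eq{K}; since $\ba\psi(1)=\fa\psi(1)=0$ the $M$-term drops out and $K(x,\tfrac1x)=\fa\psi(\tfrac1x)-\ba\psi(\tfrac1x)=\alpha(\tfrac1x)$. Because $y\mapsto K(x,y)$ is strictly decreasing (Proposition \ref{III_K_partials}) and $k(x)$ is its unique zero, this gives, for every $x>0$,
\beq\label{Kcomp}
	\tfrac1x<k(x)\ \Longleftrightarrow\ \alpha(\tfrac1x)>0\,,\qquad\qquad \tfrac1x>k(x)\ \Longleftrightarrow\ \alpha(\tfrac1x)<0\,.
\eeq
In particular $\{y=k(x)\}$ and $\{xy=1\}$ meet exactly at the points $(1/y_\ast,y_\ast)$ with $\alpha(y_\ast)=0$.

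Next I would bring in the analysis of $\alpha$ from Section \ref{alfa}: $\alpha(1)=\alpha'(1)=0$; for $\gamma\neq\frac53$ there is exactly one further zero $y_0=y_0(\gamma)$, it is simple, and $y_0<1$ iff $\gamma<\frac53$ while $y_0>1$ iff $\gamma>\frac53$; for $\gamma=\frac53$ the only zero is $y=1$; and the sign of $\alpha$ on $(0,\infty)$ is positive on $(0,y_0)$ and negative on $(y_0,1)\cup(1,\infty)$ when $\gamma<\frac53$, positive on $(0,1)$ and negative on $(1,\infty)$ when $\gamma=\frac53$, and positive on $(0,1)\cup(1,y_0)$ and negative on $(y_0,\infty)$ when $\gamma>\frac53$. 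Setting $x_0=1/y_0$, the zeros of $\alpha$ give the intersection points $(1,1)$ and $(x_0,y_0)$ (coinciding when $\gamma=\frac53$) and no others, and the interval statements in (a)--(c) follow by substituting $y=1/x$ into \eq{Kcomp} and rewriting each $y$-interval on which $\alpha\gtrless0$ as the corresponding $x$-interval.

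For the nature of the intersections I would differentiate the identity along the hyperbola: at each point of $\{xy=1\}$,
\beq\label{Kdiff}
	\del_xK-\tfrac1{x^2}\del_yK=-\tfrac1{x^2}\,\alpha'(\tfrac1x)\,.
\eeq
Since $\del_xK<0$ everywhere (Proposition \ref{III_K_partials}), $\nabla K$ never vanishes, so $\{K=0\}$ is a smooth curve near each intersection point; the left side of \eq{Kdiff} is $dK$ applied to the hyperbola's tangent direction $(1,-1/x^2)$, so it vanishes precisely when the hyperbola is tangent to $\{K=0\}$. At $(1,1)$ (where $k(1)=1$) we have $\alpha'(1)=0$, hence $\del_xK(1,1)=\del_yK(1,1)$ and, by implicit differentiation of $K(x,k(x))=0$, $k'(1)=-1$, which is the slope of $\{xy=1\}$ at $(1,1)$ --- so the intersection is tangential. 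At $(x_0,y_0)$ with $\gamma\neq\frac53$, simplicity of the zero gives $\alpha'(y_0)\neq0$, and \eq{Kdiff} shows the intersection is transversal.

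The real difficulty lies entirely outside this proposition, in the fine analysis of $\alpha$ in Section \ref{alfa}: that $\alpha'(1)=0$ (which comes from the $C^1$-matching $\nu\zeta=\kappa/\sqrt{1+a}$ of the rarefaction and shock branches at strength $1$), and, more delicately, that $\alpha$ has exactly one further zero whose position relative to $1$ is governed by a derivative of $\alpha$ at $1$ that changes sign precisely at $a=\frac14$, i.e.\ $\gamma=\frac53$. Granting those properties, the present proposition is bookkeeping organized around the single identity $K(x,\tfrac1x)=\alpha(\tfrac1x)$; the only local point needing care is the tangency/transversality discussion, which is handled by \eq{Kdiff} together with the non-vanishing of $\nabla K$.
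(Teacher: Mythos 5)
Your proof is correct and follows essentially the same route as the paper: the paper's argument also rests on the identity $K(\frac{1}{y},y)=\alpha(y,a)$ (your $K(x,\frac1x)=\alpha(\frac1x)$), the monotonicity of $K$ from Proposition \ref{III_K_partials}, and the root/sign structure of $\alpha$ from Section \ref{alfa}. The only cosmetic difference is in the tangency/transversality step, where the paper computes $k'(1)=-1$ and compares $k'(x_0)$ with $-\frac{1}{x_0^2}$ via implicit differentiation, while you phrase the same computation as a directional derivative of $K$ along the hyperbola; the two are equivalent.
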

\begin{proof}
	From \eq{K_partials} we have that $K(x,y)\gtrless 0$ for $y\lessgtr k(x)$, while
	\beq\label{K_xy=1}
		K\big({\textstyle\frac{1}{y}},y\big)=\alpha(y,a)\,.
	\eeq
	The statements about the location of the intersection points and the relative positions of $\{xy=1\}$ and 
	$\{y=k(x)\}$, follow immediately from this and the properties of the function $\alpha(y,a)$
	as detailed in Section \ref{alfa} (see Figures 8, 9, 10). 
	Calculating $k'(x)=-\frac{\del_x K(x,y)}{\del_y K(x,y)}$ at the points of 
	intersection shows that $k'(1)=-1$ and that $k'(x_0)\gtrless -\frac{1}{x_0^2}$ if and only if 
	$\alpha'(y_0)\lessgtr 0$. The tangency and transversality claims follow from this
	and the properties of $\alpha$.
\end{proof}
\begin{figure}\label{IIIb_vacuum}
	\centering
	\includegraphics[width=7.7cm,height=5.7cm]{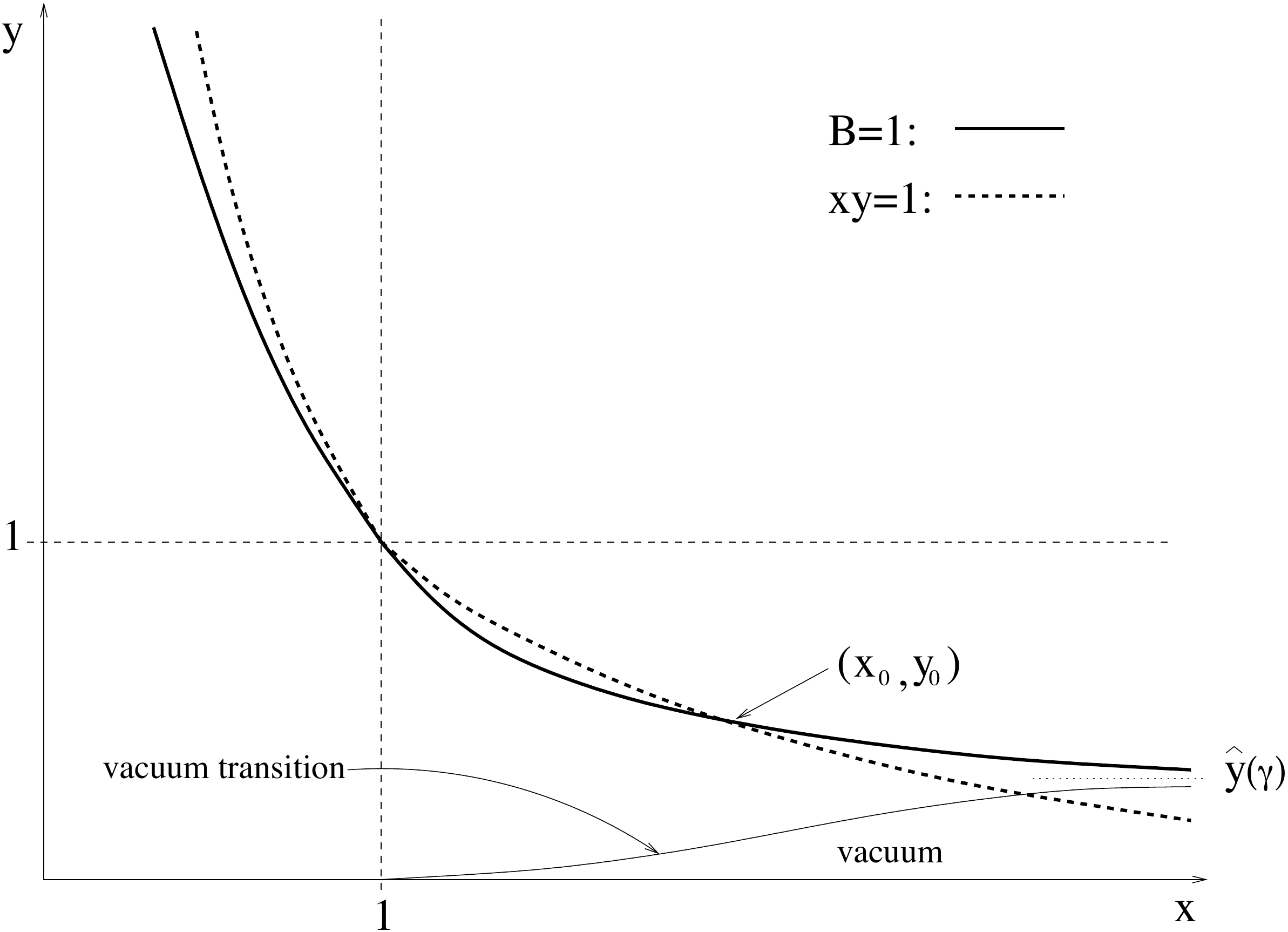}
	\caption{The sets $\{B=1\}$, $\{xy=1\}$, and their intersections for $\gamma<\frac{5}{3}$ (schematic).}
\end{figure}
%
\begin{figure}
	\centering
	\includegraphics[width=7.7cm,height=5.7cm]{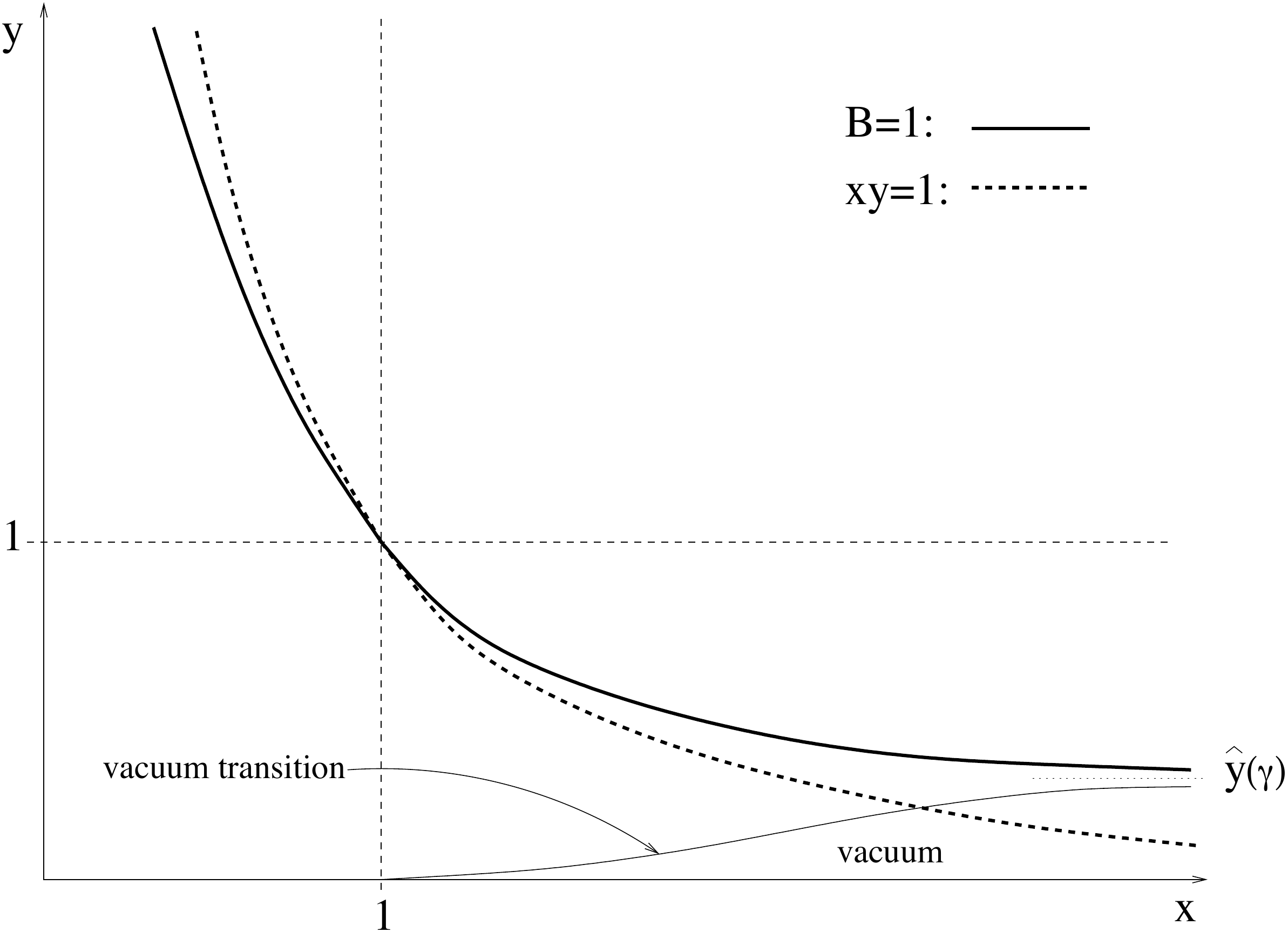}
	\caption{The sets $\{B=1\}$, $\{xy=1\}$, and their intersection for $\gamma=\frac{5}{3}$ (schematic).}
\end{figure}
%
\begin{figure}
	\centering
	\includegraphics[width=7.7cm,height=5.7cm]{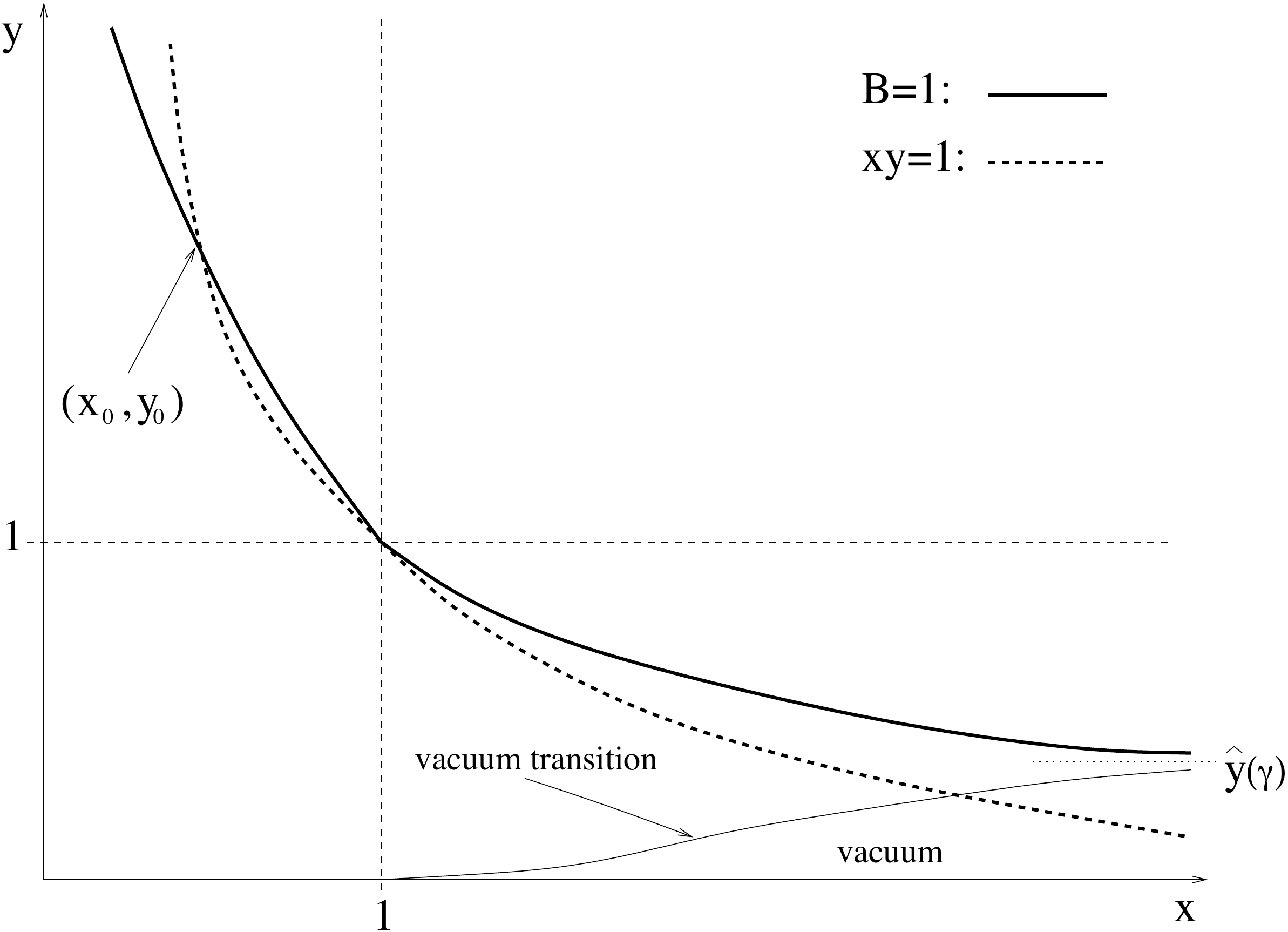}
	\caption{The sets $\{B=1\}$, $\{xy=1\}$, and their intersections for $\gamma>\frac{5}{3}$ (schematic).}
\end{figure}

\medskip

\section{Proof of Theorem \ref{sum_grIII} part $\mathrm{(ii)}$}\label{group3_reflected}
We now consider the reflected wave in overtaking interactions.
To determine the type of the outgoing forward wave $F$ we use \eq{IIIint3} together with
the defining equation \eq{IIIB} for $B$:
\beq\label{F_crit}
	F\gtrless 1\quad \Leftrightarrow\quad  xy\gtrless B \quad \Leftrightarrow\quad  
	H(x,y)\gtrless \cK(B;x,y)=0\,,
\eeq
where we have introduced the function
\beq\label{HH}
	H(x,y):=\cK(xy;x,y)=\ba\psi(xy)-\ba\psi(x)-\ba\psi(y)M(x)\,.
\eeq
We note that
\[H(1,y)=H(x,1)=H(0,y)\equiv 0\qquad \forall \,\, x,y>0\,.\]
To keep the lengths of the proofs to a reasonable length we have collected some 
parts of the analysis of $H$ in Sections \ref{H_in_IIIa}-\ref{eta}. 
We consider separately the cases $\gamma\in(1,\frac{5}{3})$,
$\gamma=\frac{5}{3}$, and $\gamma>\frac{5}{3}$.

\subsection{The reflected wave $F$ in the case $1<\gamma<\frac{5}{3}$}
The following proposition details the properties of the reflected wave when the adiabatic constant 
is between $1$ and $\frac{5}{3}$. We recall that $x_0:=\frac{1}{y_0}$, where $0<y_0<1$ is the 
unique root different from $1$ of the function $\alpha=\fa\psi-\ba\psi$ defined in Section \ref{alfa}.
\begin{proposition}\label{<5/3}
	Consider the interactions of two overtaking backward waves listed in \eq{gr3}.
	Let the left and right incoming waves have strengths $x$ and $y$, respectively.	For 
	$1<\gamma<\frac{5}{3}$ ($0<a<\frac{1}{4}$) the outgoing reflected 
	wave $F=F(x,y)$ is given as follows.
	\begin{itemize}
		\item[(a)] $\ba S\ba S$-interactions ($\, \mathrm{IIIa}$, $x,\, y>1$) yield $F>1$: the
		reflected wave is a rarefaction.
		\item[(b)] $\ba S\ba R$-interactions ($\, \mathrm{IIIb}$, $0<y<1<x$) may yield either 
		type of reflected wave. More precisely, in the region $0<y<1<x$, the set 
		$\{H(x,y)=0\}\equiv\{F=1\}$ coincides with a graph $y=h(x)$, and the 
		reflected wave $F$ is a
		\begin{itemize}
			\item[(b1)] rarefaction (i.e.\ $F>1$) if and only if $y<h(x)$
			\item[(b2)] shock (i.e.\ $F<1$) if and only if $y>h(x)$.
		\end{itemize}
		The location of the graph $y=h(x)$ is given as follows. 
		Let $y=k(x)$ be the graph 
		along which $B=1$ (defined in Section \ref{part1_grIII}). Then, in the region 
		$0<y<1<x$:
		\begin{itemize}
			\item[(b3)] the three graphs $y=h(x)$, $y=k(x)$, and $y=\frac{1}{x}$ 
			all pass through $(x_0,y_0)$,
			\item[(b4)] $h(x)<k(x)<\frac{1}{x}$ for $1<x<x_0$
			\item[(b5)] $h(x)>k(x)>\frac{1}{x}$ for $x>x_0$.
			\item[(b6)] $y=h(x)$ has horizontal asymptote $y_*(\gamma)\in(\hat y(\gamma),1)$ as $x\uparrow +\infty$, and
			\beq\label{h_lim}
				\lim_{x\downarrow 1} h(x)=y_1(\gamma):=\textstyle\left[\frac{3}{4(1-a)}\right]^\frac{1}{\zeta}<1\,.
			\eeq
		\end{itemize}
		\item[(c)] $\ba R\ba S$ interactions ($\, \mathrm{IIIc}$, $0<x<1<y$) yield $F<1$: the reflected wave is a shock.
	\end{itemize}
	The situation is summarized in Figure 5 (left diagram) and in Figure 11.
\end{proposition}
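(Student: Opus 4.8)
The plan is to analyze the function $H(x,y) = \ba\psi(xy) - \ba\psi(x) - \ba\psi(y)M(x)$ region by region, exploiting the sign structure of $\ba\psi$ (negative on $(0,1)$, positive on $(1,\infty)$), the monotonicity of $M$, and the boundary identities $H(1,y) = H(x,1) = H(0,y) \equiv 0$. For part (a) (region IIIa, $x,y>1$), I would show $H(x,y) > 0$ there: since $\ba\psi$ is increasing and $xy > x$ we get $\ba\psi(xy) > \ba\psi(x)$, but $\ba\psi(y)M(x) > 0$ as well, so a direct sign comparison is not immediate — instead I would differentiate along the direction of increasing $x$ (or fix $x$ and vary $y$ starting from $y=1$), computing $\del_y H(x,y) = x\ba\psi'(xy) - M(x)\ba\psi'(y)$, and show this is positive for $x,y>1$ using the explicit shock-branch formulas for $\ba\psi'$ and the bound on $M$. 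Then $H(x,1)=0$ gives $H > 0$ throughout IIIa, hence $B < xy$, hence $F > 1$ by \eq{F_crit}. Part (c) (region IIIc, $0 < x < 1 < y$) is the analogous argument with the roles adjusted: here $xy$ can be on either side of $1$, so I would fix $x \in (0,1)$, note $H(x,1) = 0$, and show $\del_y H(x,y) < 0$ for $y > 1$ (now $M(x) > M(1) = $ the relevant constant and $\ba\psi'(y)$ dominates), giving $H < 0$, hence $B > xy$, hence $F < 1$.

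The substantive part is (b), the $\ba S\ba R$ region $0 < y < 1 < x$. First I would establish that $\{H(x,y) = 0\}$ is a graph $y = h(x)$ by showing $\del_y H(x,y) < 0$ on this region (for fixed $x > 1$, as $y$ runs through $(0,1)$, $H$ is strictly decreasing), together with $\lim_{y \downarrow 0} H(x,y) > 0$ and $H(x,1) = 0$ with $H$ still decreasing through $y=1$ — actually I expect $H(x,1) = 0$ to be the relevant upper endpoint value, so I need $H(x, y) > 0$ for $y$ slightly below $1$, i.e.\ $\del_y H(x,1^-) < 0$, which pins the root $h(x) < 1$ in the interior. Monotonicity in $y$ then gives (b1), (b2) immediately via \eq{F_crit}. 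For the location claims (b3)–(b5): claim (b3) that all three graphs pass through $(x_0, y_0)$ follows by evaluating $H(\frac{1}{y}, y) = \alpha(y,a) \cdot (\text{something})$ — more precisely I would compute $H$ restricted to the hyperbola $xy = 1$, where $\ba\psi(xy) = \ba\psi(1) = 0$, so $H(\frac1y, y) = -\ba\psi(\frac1y) - \ba\psi(y)M(\frac1y)$, and relate this to $\alpha(y,a)$ and to the function $K$ from \eq{K} via \eq{K_xy=1}; since $K(\frac1y, y) = \alpha(y,a)$ vanishes at $y_0$, and the defining relations force $H$ and $K$ to share this zero on the hyperbola, all three curves meet at $(x_0, y_0)$. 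Claims (b4), (b5) — the ordering $h(x) \lessgtr k(x) \lessgtr \frac1x$ on either side of $x_0$ — I would get by comparing the signs of $H$ and $K$ along vertical lines and invoking Proposition \ref{III_K_prop2} for the $k$ versus $\frac1x$ comparison, then using that $F > 1 \Leftrightarrow H(x,y) > 0$ while $B > 1 \Leftrightarrow K(x,y) < 0$ (via the sign of $M(x)$), plus a lemma relating the signs of $H$ and $K$ off the hyperbola.

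For the asymptotic/limit claims (b6): $\lim_{x\downarrow 1} h(x) = y_1(\gamma)$ I would compute by letting $x \to 1$ in $H(x,y) = 0$. At $x = 1$, $\ba\phi(1) = 1$, $M(1)$ takes an explicit value, and $\ba\psi(1) = 0$, so $H(1,y) \equiv 0$ — meaning the limit is degenerate and I must expand to first order in $(x-1)$: writing $H(x,y) \approx (x-1)\, \del_x H(1,y) + o(x-1)$ and solving $\del_x H(1,y) = 0$ for $y$. Computing $\del_x H(1,y) = y\ba\psi'(y) - \ba\psi'(1) - \ba\psi(y)M'(1)$ and substituting the explicit rarefaction-branch formulas ($\ba\psi(y) = \nu(y^\zeta - 1)$ for $y < 1$) should, after simplification, yield the equation whose root is $\left[\frac{3}{4(1-a)}\right]^{1/\zeta}$; this is a routine but delicate computation where the constants $\nu$, $\kappa$, $\zeta$ and the value $M'(1)$ must be handled carefully. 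The horizontal asymptote $y_*(\gamma) = \lim_{x\to\infty} h(x)$ comes from the limit $\lim_{x\uparrow\infty} H(x,y) =: H_\infty(y)$, analogous to the computation of $K_\infty(y)$ done at the end of Section \ref{part1_grIII}: using $\ba\psi(xy) \to +\infty$ and $\ba\phi(x) \to a$, $M(x) \to$ const, I expect $H_\infty(y)$ to be a single explicit decreasing function of $y$ with a unique root in $(\hat y(\gamma), 1)$, where the bound $y_* > \hat y$ follows from $H_\infty$ versus $K_\infty$ sign comparison at $\hat y$. The main obstacle throughout will be the sign of $\del_y H$ (and in places $\del_x H$) on regions where no crude bound works: this requires the fine properties of $\ba\psi'$, $M$, $M'$, and the auxiliary functions $\ell$, $m$, $n$ collected in Section \ref{aux}, exactly as in the proof of Proposition \ref{III_K_partials}, and getting the inequalities to close on the $xy < 1$ versus $xy \geq 1$ split is where the real work lies.
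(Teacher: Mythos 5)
Your plan for part (b) rests on a structural claim that is false: that $y\mapsto H(x,y)$ is strictly decreasing on $(0,1)$ for fixed $x>1$, with $H>0$ just below $y=1$ and $\del_y H(x,1^-)<0$. In fact $\lim_{y\downarrow 0}H(x,y)=\nu\big(M(x)-1\big)-\ba\psi(x)>0$ (by \eq{M2}) and $H(x,1)=0$, so if $H$ were monotone decreasing it would be positive on all of $(0,1)$ and there would be \emph{no} interior zero $h(x)$ and no shock region at all, contradicting the very statement you are proving. The true picture (this is the content of Section \ref{del_H}) is the opposite of what you assume near $y=1$: for $a\le\frac14$ and $x>1$ one has $\del_y H(x,y)=\nu\zeta M(x)y^{\zeta-1}Q(xy,x,a)$ on $\{xy>1\}$, with $Q(1,x,a)<0$ and $Q(x,x,a)>0$, so $H$ first decreases and then increases back to $0$ at $y=1$; in particular $H<0$ just below $y=1$, consistent with (b2). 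The paper obtains existence and uniqueness of $h$ by splitting IIIb along the hyperbola $xy=1$: on $\mathrm{IIIb1}=\{xy<1\}$ both $\ba\psi(xy)$ and $\ba\psi(y)$ are on the rarefaction branch, so $H=0$ is \emph{explicitly solvable}, giving $h_1$ in \eq{h_1}; on $\mathrm{IIIb2}=\{xy>1\}$ the single sign change of $\del_y H$ together with the sign of $H(x,\frac1x)=M(x)\alpha(\frac1x,a)$ pins down exactly one root $h_2(x)$ for $x>x_0$ and none for $x\le x_0$. Your proposal contains neither this splitting nor the explicit solvability, and your monotonicity claim cannot be repaired without essentially redoing that analysis. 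Relatedly, your route to \eq{h_lim} fails: since $M'(1)=\zeta$ and $\ba\psi'(1)=\nu\zeta$ (the branches match to first order), one checks that $\del_x H(1,y)\equiv 0$ for all $0<y<1$, so the first-order expansion you propose yields $0=0$ and no equation for $y_1$; the value $\left[\frac{3}{4(1-a)}\right]^{1/\zeta}$ only appears at second order (the paper instead reads it off from the explicit $h_1$, itself a doubly degenerate $0/0$ limit).

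For parts (a) and (c) your proposed inequalities ($\del_y H>0$ on IIIa, $\del_y H<0$ on IIIc for $a\le\frac14$) are plausible but unproven and are razor-thin: e.g.\ at $a=\frac14$ the inequality $x\ba\psi'(xy)>M(x)\ba\psi'(y)$ nearly saturates along $y=1$ (it is exactly the threshold value of $\gamma$ at which $\{H=0\}$ begins to enter IIIa), so ``explicit formulas plus a bound on $M$'' will not close it without an analysis of the same delicacy as Section \ref{del_H}; the paper avoids this by reducing $H\gtrless0$ on IIIa to the algebraic criterion \eq{H_pos} (whence \eq{H_crit}), and by differentiating along hyperbolas $xy=\mathrm{const}$ in IIIc, where the derivative factors explicitly (Lemma \ref{IIIc_H_a<1/4}); note also your parenthetical ``$M(x)>M(1)$'' is wrong for $x<1$. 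Finally, for the asymptote in (b6), $\lim_{x\uparrow\infty}H(x,y)$ is not finite (unlike $K_\infty$): $H(x,y)\sim\sqrt{x}\,\big[\kappa(\sqrt{y}-1)-\sqrt{a}\,\nu(y^\zeta-1)\big]$, so $y_*$ must be extracted as the root of the coefficient of $\sqrt{x}$, not of a limiting function $H_\infty$. Your ideas for (b3)--(b5) (evaluating $H$ on $xy=1$ via \eq{H_xy=1} and comparing signs of $H$ and $K$) are in the right spirit and close to the paper's argument, but the core of part (b) and the limit \eq{h_lim} as you propose them do not work.
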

\begin{figure}
	\centering
	\includegraphics[width=7.7cm,height=5.7cm]{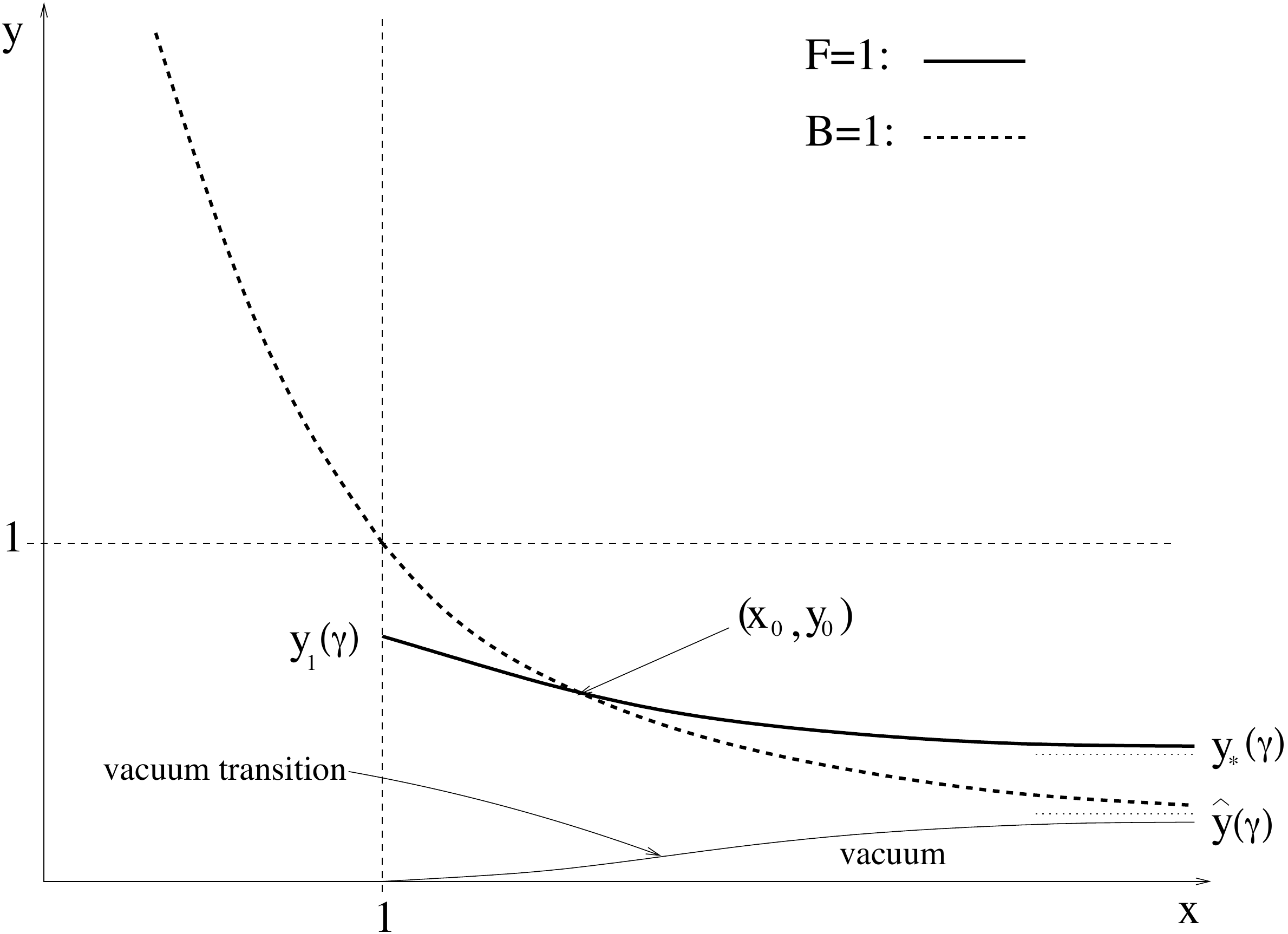}
	\caption{The sets $\{F=1\}$, $\{B=1\}$, and their intersection for $\gamma<\frac{5}{3}$ (schematic).}
\end{figure}
\begin{proof} We consider each region in turn:
	\begin{itemize}
		\item[(a)] This part follows directly from \eq{F_crit} and \eq{H_crit}.
		\item[(b)] For this part we split the argument into several steps.
		\begin{itemize} 
		\item[1.] We first observe that by definition \eq{M} of the auxiliary function $M$, we have
		\beq\label{H_xy=1}
			H\big({\textstyle\frac{1}{y}},y\big)=M\big(\textstyle\frac{1}{y}\big)\alpha(y,a)\,.
		\eeq
		Together with \eq{IIIint3}, \eq{K_xy=1}, and the properties of the map $\alpha(\cdot,a)$ (see Figure 15),
		this shows: in the region IIIb ($0<y<1<x$), and for $1<\gamma<\frac{5}{3}$, 
		the two curves $\{xy=1\}$ and $\{B=1\}\equiv\{y=k(x)\}$, and the set 
		$\{F=1\}\equiv\{H(x,y)=0\}$ (yet to be shown to be a curve), intersect at, and only 
		at, the point $(x_0,y_0)$.

		\item[2.] Next we consider the equation $H(x,y)=0$ in the lower sub-region 
		\[\text{IIIb1}:=\{0<y<xy<1<x\}\,.\] 
		Using the explicit expressions for the auxiliary functions $\ba\psi$ and $M$ we
		may solve explicitly for $y$ as a function of $x$ and obtain that:
		in IIIb1 the set $\{H(x,y)=0\}$ coincides with the graph $y=h_1(x)$ where
		\beq\label{h_1}
			\qquad\qquad\quad\,\,\,\,
			h_1(x)=\left[\frac{M(x)-\frac{1}{\nu}\ba\psi(x)-1}{M(x)-x^\zeta}\right]^\frac{1}{\zeta}
			=\left[\frac{\sqrt{x+ax^2}-\sqrt{x+a}-\frac{\kappa}{\nu}(x-1)}
			{\sqrt{x+ax^2}-x^\zeta\sqrt{x+a}}\right]^\frac{1}{\zeta}\!\!.\!\!  
		\eeq
		Note that the numerator and denominator are both positive by Lemma \ref{extra1}.
		A calculation shows that $h_1(x)<\frac{1}{x}$ if and only if $\alpha(\frac{1}{x},a)<0$,
		which, according to the analysis in Section \ref{alfa}, holds if and only if $x<x_0=\frac{1}{y_0}$ 
		(since $\gamma<\frac{5}{3}$). A direct evaluation shows that
		\[\lim_{x\downarrow 1} h_1(x)=\left[{\textstyle\frac{3}{4(1-a)}}\right]^\frac{1}{\zeta}=:y_1(\gamma)\,,\]
		which is $<1$ since $a<\frac{1}{4}$. This verifies \eq{h_lim}. 
		We conclude that $\{F=1\}\cap \text{IIIb1}$ is given as the graph $\{(x,h_1(x))\,|\, 1<x<x_0\}$. 
		Furthermore, it follows from \eq{M1} and \eq{M2} in Lemma \ref{extra1} that
		\[\{F\gtrless 1\}\cap \text{IIIb1} = \{y\lessgtr h_1(x),\, 1<x<x_0\}\,.\]
		By the intersection properties verified in step 1 above, and the fact that  
		\[\lim_{x\downarrow 1}k(x)=1\,,\] 
		it follows that the graph $\{y=h_1(x)\,|\, 1<x<x_0\}$ lies strictly below the curve segment
		$\{y=k(x)\, |\, 1<x<x_0\}$.
			
		\item[3.] The analysis of the equation $H(x,y)=0$ in the upper sub-region 
		\[\text{IIIb2}:=\{0<y<1<xy\}\]
		is more involved. In particular it is not possible to solve for $y$ explicitly in terms of $x$
		or vice versa. Instead we will make use of the analysis in Section \ref{del_H}. First, \eq{1st}
		shows that the map $y\mapsto H(x,y)$ has no root in $(\textstyle\frac{1}{x},1)$, whenever 
		$x\leq x_0$ and $1<\gamma<\frac{5}{3}$. We infer that $\{F=1\}\cap \text{IIIb2}$ lies 
		in the region $x>x_0$. We next locate the zero-level of $H$ more precisely. 
		By \eq{HH} we have $H(x,y)=0$ if and only if 
		\[\ba\psi(y)=\frac{\ba\psi(xy)-\ba\psi(x)}{M(x)}\,.\]
		For $(x,y)\in\{F=1\}\cap\,$IIIb2 we thus get by \eq{M} (see \eq{K} for the definition of $K$)
		\[\qquad \qquad K(x,y)=\ba\psi\big(\textstyle\frac{1}{xy}\big)M(y)+\fa\psi\big(\frac{1}{x}\big)
		-\frac{\ba\psi(xy)-\ba\psi(x)}{M(x)} =\ba\psi\big(\textstyle\frac{1}{xy}\big)M(y)
		-\frac{\ba\psi(xy)}{M(x)} <0\,.\]
		That is, in sub-region IIIb2 the zero level of $H$ (yet to be shown to be a graph 
		$y=h_2(x)$) lies above $y=k(x)$, and therefore (according to part (a) of Proposition 
		\ref {III_K_prop2}) also above $y=\frac{1}{x}$.  
		Finally, since $1<\gamma<\frac{5}{3}$, \eq{1st} and \eq{2nd} show that  
		$\{F(x,y)=1\} \cap \{x>x_0,\, k(x)<y<1\}$ is a graph of a $C^2$-function $h_2(x)$. 
		The graph $y=h(x)$ in the statement of 
		Proposition \ref{<5/3} is then the concatenation of $y=h_1(x)$ for $x<x_0$
		with $y=h_2(x)$ for $x>x_0$.
		
		\item[4.] We finally note that the inequalities between $k(x)$ and $\frac{1}{x}$ 
		were established in Part (a) of Proposition \ref{III_K_prop2}. 
		This concludes the proof of part (b).
	\end{itemize}
	\item[(c)] By \eq{F_crit} the reflected wave satisfies $F<1$ if and only if $H(x,y)<0$.
	When $\gamma<\frac{5}{3}$ we have $a<\frac{1}{4}$, and Lemma \ref{IIIc_H_a<1/4}
	shows that $H(x,y)<0$ in IIIc.
	\end{itemize}	
\end{proof}

\begin{remark}
	While numerical plots indicate that the function $y=h(x)$ is monotone decreasing,
	we have not been able to prove this.
\end{remark}

\medskip

\subsection{The reflected wave $F$ in the case $\gamma=\frac{5}{3}$}
This is a limiting case; for clarity of exposition we treat it separately. The following proposition 
details the properties of the reflected wave in the particular case when the adiabatic
constant is that of a monatomic gas, $\gamma=\frac{5}{3}$.
\begin{proposition}\label{=5/3}
	Consider the interactions of two overtaking backward waves listed in \eq{gr3}.
	Let the left and right incoming waves have strengths $x$ and $y$, respectively.	For 
	$\gamma=\frac{5}{3}$ ($a=\frac{1}{4}$) the outgoing reflected 
	wave $F=F(x,y)$ is given as follows.
	\begin{itemize}
		\item[(a)] $\ba S\ba S$-interactions ($\, \mathrm{IIIa}$, $x,\, y>1$) yield $F>1$: 
		the reflected wave is a rarefaction.
		\item[(b)] $\ba S\ba R$-interactions ($\, \mathrm{IIIb}$, $0<y<1<x$) may yield either 
		type of reflected wave. More precisely, in the region $0<y<1<x$, the set 
		$\{H(x,y)=0\}\equiv\{F=1\}$ coincides with a graph $y=h(x)$, and the 
		reflected wave $F$ is a
		\begin{itemize}
			\item[(b1)] rarefaction (i.e.\ $F>1$) if and only if $y<h(x)$
			\item[(b2)] shock (i.e.\ $F<1$) if and only if $y>h(x)$.
		\end{itemize}
		The location of the graph $y=h(x)$ is given as follows. 
		Let $y=k(x)$ be the graph along which $B=1$ (defined in Section \ref{part1_grIII}). Then, in the region 
		$0<y\leq1\leq x$:
		\begin{itemize}
			\item[(b3)] the three graphs $y=h(x)$, $y=k(x)$, and $y=\frac{1}{x}$ 
			all pass through $(1,1)$, and
			\item[(b4)] $h(x)>k(x)>\frac{1}{x}$ for $x>1$.
		\end{itemize}
		\item[(c)] $\ba R\ba S$ interactions ($\, \mathrm{IIIc}$, $0<x<1<y$) yield $F<1$: 
		the reflected wave is a shock.
	\end{itemize}
	The situation is summarized in Figure 5 (right diagram) and in Figure 12.
\end{proposition}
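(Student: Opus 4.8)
The plan is to mirror the proof of Proposition~\ref{<5/3}, using the fact that at $\gamma=\frac{5}{3}$ (i.e.\ $a=\frac14$) the function $\alpha=\fa\psi-\ba\psi$ has a double root at $y=1$, so that $y_0=x_0=1$ and the two sub-regions IIIb1 and IIIb2 meet only at the single point $(1,1)$. Parts (a) and (c) need essentially no new work. For (a), in region IIIa the sign of $H$ is the one recorded in \eq{H_crit}, and \eq{F_crit} then gives $F>1$. For (c), \eq{F_crit} reduces the claim $F<1$ to $H(x,y)<0$ on IIIc, which is the $a=\frac14$ instance of the argument behind Lemma~\ref{IIIc_H_a<1/4}.

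The substance is part (b), which I would organize into three steps. \emph{Intersections.} From \eq{H_xy=1}, $H(\tfrac1y,y)=M(\tfrac1y)\,\alpha(y,a)$; since $\alpha(\cdot,\tfrac14)$ vanishes only at $y=1$ (Section~\ref{alfa}), this, together with \eq{K_xy=1} and \eq{IIIint3}, shows that inside $\overline{\text{IIIb}}$ the three sets $\{xy=1\}$, $\{B=1\}=\{y=k(x)\}$ and $\{F=1\}=\{H=0\}$ can meet only at $(1,1)$; combined with $k(1)=1$ (Theorem~\ref{sum_grIII}(i)) and the endpoint computation below, this gives (b3). \emph{Region IIIb1.} Solving $H=0$ with the explicit branches of $\ba\psi$ valid on $\text{IIIb1}=\{0<y<xy<1<x\}$ produces, exactly as in \eq{h_1} with $a=\frac14$, the function $h_1(x)$; the computation of step~2 in the proof of Proposition~\ref{<5/3} gives $h_1(x)<\tfrac1x\Leftrightarrow\alpha(\tfrac1x,\tfrac14)<0$, and since $\alpha(\cdot,\tfrac14)>0$ on $(0,1)$ (Section~\ref{alfa}), we get $h_1(x)>\tfrac1x$ for every $x>1$. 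Thus the formal solution never enters IIIb1, so $H$ keeps a constant sign there; evaluating $\lim_{y\downarrow0}H(x,y)=\nu\bigl(M(x)-1\bigr)-\ba\psi(x)$, which is $>0$ for $x>1$ by Lemma~\ref{extra1}, yields $H>0$ (hence $F>1$) throughout IIIb1 and $\{F=1\}\cap\text{IIIb1}=\varnothing$. Note that $\lim_{x\downarrow1}h_1(x)=\bigl[\tfrac{3}{4(1-a)}\bigr]^{1/\zeta}=1$ when $a=\frac14$, fixing the left endpoint of the graph at $(1,1)$.

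\emph{Region IIIb2.} On $\text{IIIb2}=\{0<y<1<xy\}$ no explicit solution is available, and here I would use the sign and monotonicity estimates for $\del_y H$ collected in Section~\ref{del_H} (equations \eq{1st}, \eq{2nd}), specialized to $a=\frac14$, to conclude that $\{F=1\}\cap\text{IIIb2}$ is the graph of a $C^2$ function $y=h_2(x)$ on $(1,\infty)$, with $F\gtrless1$ below/above it. Its location follows as in step~3 of the proof of Proposition~\ref{<5/3}: on $\{F=1\}\cap\text{IIIb2}$ one has $\ba\psi(y)=\frac{\ba\psi(xy)-\ba\psi(x)}{M(x)}$, so by \eq{K} and \eq{M}, $K(x,y)=\ba\psi(\tfrac1{xy})M(y)-\frac{\ba\psi(xy)}{M(x)}<0$, i.e.\ $y>k(x)$; and $k(x)>\tfrac1x$ for $x>1$ by Proposition~\ref{III_K_prop2}(b). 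Setting $h:=h_2$ (no concatenation is needed here, in contrast to the $\gamma<\frac53$ case) then gives (b1)--(b2) from the sign of $H$ below/above the graph, (b4) from $h_2>k>\tfrac1x$, and the passage through $(1,1)$ in the limit $x\downarrow1$ from the endpoint computation above.

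I expect the main obstacle to be the IIIb2 step: showing that the zero set of $H$ there is a single $C^2$ graph. This rests on the precise sign of $\del_y H$ from \eq{1st}--\eq{2nd}, which must be verified at the borderline value $a=\frac14$ rather than only for $a<\frac14$, and requires care near $(1,1)$, where the two branches of the root locus of $\alpha$ collapse onto the single point $y=1$. A routine remaining point, needed for Theorem~\ref{sum_grIII}(ii) though not stated in the proposition itself, is the horizontal asymptote $\lim_{x\to\infty}h(x)=y_*(\tfrac53)$, which follows from $\lim_{x\to\infty}K(x,y)=K_\infty(y)$ together with the IIIb2 description of $h$.
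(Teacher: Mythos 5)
Your proposal follows essentially the same route as the paper's proof: part (a) via \eq{F_crit} and \eq{H_crit}, part (c) via Lemma \ref{IIIc_H_a<1/4}, and part (b) by excluding zeros of $H$ from IIIb1 through the explicit formula \eq{h_1} and the positivity of $\alpha(\cdot,\tfrac14)$ on $(0,1)$, then invoking the analysis of Section \ref{del_H} (which is carried out for $0<a\le\tfrac14$, so the borderline value you worry about is covered, with $\bar x=\hat z=1$) to obtain a unique root $y=h_2(x)\in(\tfrac1x,1)$ for each $x>1$, placed above $y=k(x)$ by the $K<0$ computation and Proposition \ref{III_K_prop2}(b). The one blemish is your justification of (b3): the limit $\lim_{x\downarrow1}h_1(x)=1$ concerns the formal IIIb1 branch rather than the actual graph $h=h_2$, but the claim follows at once from your own bounds $k(x)<h_2(x)<1$ together with $k(x)\to1$ as $x\downarrow1$.
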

\begin{proof} We consider each region in turn:
	\begin{itemize}
		\item[(a)] This part follows directly from  \eq{F_crit} and \eq{H_crit}.
		\item[(b)] The proof of this part follows the proof for part (b) of Proposition \ref{<5/3}.
		We split the argument into similar steps.
		\begin{itemize} 
		\item[1.] From \eq{H_xy=1}, \eq{K_xy=1}, and the properties of the map $\alpha(\cdot;\frac{1}{4})$
		(see Figure 16) it follows that: in region IIIb ($0<y<1<x$) when $\gamma=\frac{5}{3}$, 
		the two curves $\{xy=1\}$ and $\{B=1\}\equiv\{y=k(x)\}$, and the set 
		$\{F=1\}\equiv\{H(x,y)=0\}$ (yet to be shown to be a curve), do not intersect.

		\item[2.] Next we will show that the equation $H(x,y)=0$ has no solution in the lower sub-region 
		\[\text{IIIb1}:=\{0<y<xy<1<x\}\] 
		whenever $a\geq \frac{1}{4}$. (We will use this again in the proof of Proposition \ref{=>5/3}). 
		As in the proof of Proposition \ref{<5/3} we get that if 
		$(x,y)\in\text{IIIb1}$ satisfies $H(x,y)=0$, then $y^\zeta=h_1(x)$ where $h_1$ is 
		given by \eq{h_1} (with $a=\frac{1}{4}$). In particular, since $y<\frac{1}{x}$ in IIIb1, 
		we would have that $h_1(x)<\frac{1}{x}$, which is equivalent to 
		\[\alpha\big(\textstyle\frac{1}{x}\big)<0\,.\]
		For $a\geq \frac{1}{4}$, or equivalently, $\gamma\geq \frac{5}{3}$, this implies $x<1$
		(see Figures 16 and 17), which contradicts the assumption that $(x,y)\in\text{IIIb1}$. 	
		This shows that the set $\{F=1\}$ does not meet the sub-region IIIb1 when $\gamma\geq \frac{5}{3}$.
			
		\item[3.] Next consider the equation $H(x,y)=0$ in the upper sub-region 
		\[\text{IIIb2}:=\{0<y<1<xy\}\,.\]
		As in the proof of Proposition \ref{<5/3} we use the results of Section \ref{del_H}. For $a=\frac{1}{4}$ it 
		is shown there that the map $y\mapsto H(x,y)$ has exactly one root in $(\frac{1}{x},1)$ for 
		each fixed $x>1$ (recall $x_0=1$ in the present case). It follows that the set 
		$\{F(x,y)=1\} \cap \{x>1,\, k(x)<y<1\}$ consists of a $C^2$-smooth graph $\{y=h_2(x)\}$. 		
		
		\item[4.] We finally note that the inequalities between $k(x)$ and $\frac{1}{x}$ were established 
		in Part (b) of Proposition \ref{III_K_prop2}. This concludes the proof of part (b).
	\end{itemize}
	\item[(c)] As in the proof of Proposition \ref{<5/3} this part follows from Lemma \ref{IIIc_H_a<1/4}. 
	\end{itemize}	
\end{proof}

\begin{figure}\label{BandF=1_gam53}
	\centering
	\includegraphics[width=7.7cm,height=5.7cm]{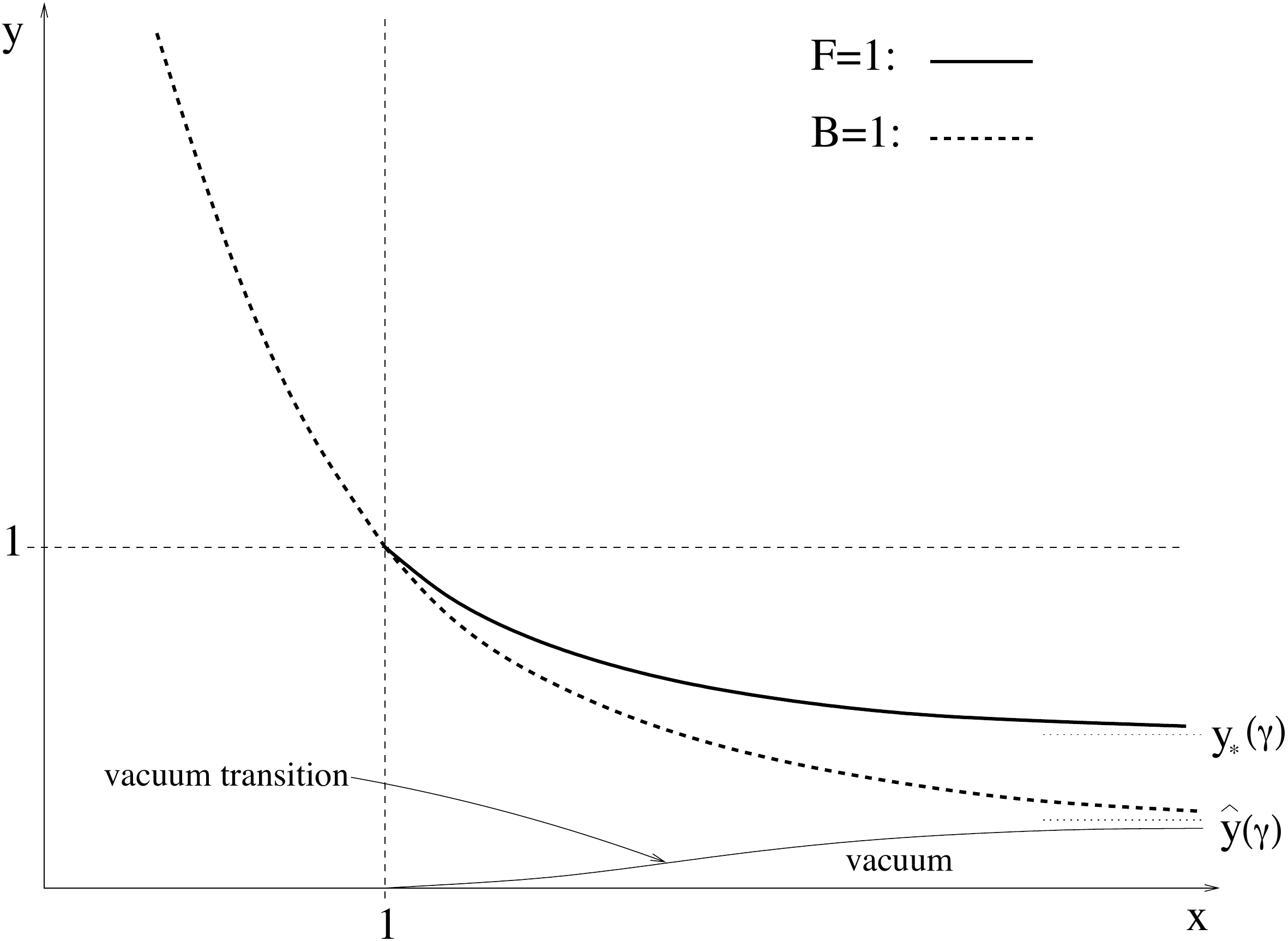}
	\caption{The sets $\{F=1\}$, $\{B=1\}$, and their intersection for $\gamma=\frac{5}{3}$ (schematic).}
\end{figure}

\medskip

\subsection{The reflected wave $F$ in the case $\gamma>\frac{5}{3}$}
This case is the most complicated one to analyze. In particular, now the set
$\{F=1\}\equiv \{H=0\}$ either meets all three of IIIa, IIIb, and IIIc, or just IIIa and IIIc.  
More precisely we shall see that $\{H(x,y)=0\}$ is a curve in the $(x,y)$ plane which always 
meets IIIa and IIIc, and which meets IIIb if and only if $\frac{5}{3}<\gamma<2$. See Figures
13 and 14. 
\begin{proposition}\label{=>5/3}
	Consider the interactions of two overtaking backward waves listed in \eq{gr3}.
	Let the left and right incoming waves have strengths $x$ and $y$, respectively.	For 
	$\gamma>\frac{5}{3}$ ($a>\frac{1}{4}$) the outgoing reflected 
	wave $F=F(x,y)$ is given as follows.
	\begin{itemize}
		\item[(a)] $\ba S\ba S$-interactions ($\, \mathrm{IIIa}$, $x,\, y>1$) may yield either type of 
		reflected wave. 
		More precisely, in the region $x,\, y>1$, the set $\{F=1\}$ coincides 
		with the graph of the strictly decreasing function
		\beq\label{j}
			j(x)=\frac{2a}{(1-a)^2x}\left[a(1+x)+\sqrt{a^2(1+x)^2+ax(1-a)^2}\right]\,.
		\eeq
		Set
		\beq\label{xy_bar}
			\bar x:=\frac{4a^2}{1-3a}\qquad\qquad \bar y:=\frac{2a\sqrt{a}}{(1-\sqrt{a})^2}
			\qquad\qquad y_*:=\frac{4a^2}{(1-a)^2}.
		\eeq
		Then the graph intersects the line $x=1$ at $(1,\bar y)$. 
		For $\frac{5}{3}<\gamma<2$ ($\frac{1}{4}<a<\frac{1}{3}$) the graph  
		intersects the line $y=1$ at $(\bar x,1)$, while for 
		$\gamma\geq 2$ ($a\geq \frac{1}{3}$) it has the horizontal asymptote $y=y_*>1$
		as $x\uparrow \infty$. The reflected wave is a: 
		\begin{itemize}
			\item[(a1)] rarefaction ($F>1$) if and only if $y>\max(j(x),1)$
			\item[(a2)] shock ($F<1$) if and only if $1<y<j(x)$.
		\end{itemize}
		\item[(b)] $\ba S\ba R$-interactions ($\, \mathrm{IIIb}$, $0<y<1<x$) may or may not yield either type of 
		reflected wave.
		First, for $\gamma\geq 2$ ($a\geq \frac{1}{3}$) the set $\{F=1\}$
		does not meet $\mathrm{IIIb}$ and the reflected wave is necessarily a rarefaction ($F>1$).

		On the other hand, for $\frac{5}{3}<\gamma< 2$ ($\frac{1}{4}<a< \frac{1}{3}$) the set $\{F=1\}$
		meets $\mathrm{IIIb}$ along a graph $y=h(x)$, defined for $x>\bar x$, with $h(\bar x)=1$.
		The reflected wave $F$ is a:
		\begin{itemize}
			\item[(b1)] rarefaction ($F>1$) if and only if  $y<\min(h(x),1)$
			\item[(b2)] shock ($F<1$) if and only if $x>\bar x$ and $h(x)<y<1$.
		\end{itemize}
		The graph $y=h(x)$ in the region $0<y<1<x$ lies above the graph $y=k(x)$ 
		along which $B=1$ (defined in Section \ref{part1_grIII}). 
		\item[(c)] $\ba R\ba S$-interactions ($\, \mathrm{IIIc}$, $0<x<1<y$) may 
		yield either type of reflected wave. More precisely, in $\mathrm{IIIc}$ the set $\{F=1\}$ 
		consists of a graph $y=i(x)$, $0<x<1$, with the properties:
		\begin{itemize}
			\item[(c1)] a rarefaction results ($F>1$) if and only if $y<i(x)$
			\item[(c2)] a shock results ($F<1$) if and only if $y>i(x)$
			\item[(c3)] $i(x)\equiv y_0$ (the unique zero of $\alpha(y,a)$) for $0<x<\frac{1}{y_0}$
			\item[(c4)] $\frac{1}{x}<i(x)<\min\big\{\frac{\bar y}{x},y_0\big\}$ for $\frac{1}{y_0}<x<1$.
		\end{itemize}
	\end{itemize}
	The situation is summarized in Figures 6, 13, and 14.
\end{proposition}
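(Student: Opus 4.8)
The plan is to read the type of the reflected wave off the master criterion \eq{F_crit}, that is, $F\gtrless1\Leftrightarrow H(x,y)\gtrless0$ with $H$ as in \eq{HH}, using the trivial zeros $H(1,y)=H(x,1)=H(0,y)\equiv0$ and the fact that on each of the relevant ranges ($x\lessgtr1$, $y\lessgtr1$, $xy\lessgtr1$) the function $\ba\psi$ takes its explicit shock/rarefaction form from Section \ref{curves}, while $M(b)=\sqrt{b\,\ba\phi(b)}$ (so in particular $M(x)=x^\zeta$ for $0<x<1$). I would then determine the zero set $\{H=0\}$ and the sign of $H$ separately on $\mathrm{IIIa}$, on $\mathrm{IIIb}$ (split at $xy=1$ into $\mathrm{IIIb1}=\{0<y<xy<1<x\}$ and $\mathrm{IIIb2}=\{0<y<1<xy\}$), and on $\mathrm{IIIc}$ (similarly split at $xy=1$). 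The whole difficulty is that, in contrast with $\mathrm{IIIa}$, on $\mathrm{IIIb2}$ (when $\tfrac14<a<\tfrac13$) and on the upper part of $\mathrm{IIIc}$ the equation $H=0$ cannot be solved in closed form, so there one must rely on the sign analysis of $\del_y H$ and $\del_y^2 H$ carried out in Sections \ref{del_H}--\ref{eta}.

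On $\mathrm{IIIa}$ ($x,y>1$, hence $xy>1$) every argument of $\ba\psi$ in \eq{HH} exceeds $1$, so $H(x,y)=0$ is an algebraic relation among $x$, $y$ and the radicals $\sqrt{x+a}$, $\sqrt{y+a}$, $\sqrt{xy+a}$, $\sqrt{x+ax^2}$. Clearing denominators, dividing out the factor corresponding to the trivial root $y=1$, and solving for $y$ produces $y=j(x)$ with $j$ as in \eq{j}; hence $\{F=1\}\cap\mathrm{IIIa}=\{y=j(x)\}$. The remaining assertions of part (a) are then routine computations from the closed form: writing $j(x)=\tfrac{2a}{(1-a)^2}\bigl[a(1+\tfrac1x)+\sqrt{a^2(1+\tfrac1x)^2+\tfrac{a(1-a)^2}{x}}\bigr]$ exhibits $j$ as a sum of two strictly decreasing functions of $x$, so $j$ is strictly decreasing; $j(1)=\bar y$ and $\lim_{x\uparrow\infty}j(x)=y_*$ by substitution and a limit; and $j(x)=1$ has the root $x=\bar x=\tfrac{4a^2}{1-3a}$, which is positive exactly when $a<\tfrac13$, whereas for $a\ge\tfrac13$ one finds $j(x)>y_*>1$ for all $x$. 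For (a1)--(a2) I would note that for fixed $x>1$ one has $H(x,y)\to+\infty$ as $y\uparrow\infty$ (the leading terms compare $\sqrt x$ with $M(x)$, and $\sqrt x>M(x)\Leftrightarrow x>1$); since $y=j(x)$ is the only zero of $y\mapsto H(x,y)$ inside $\mathrm{IIIa}$, this gives $H>0\Leftrightarrow y>\max(j(x),1)$.

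On $\mathrm{IIIb1}$, Step 2 in the proof of Proposition \ref{=5/3} (established there for all $a\ge\tfrac14$, precisely with the present use in mind) shows that $H=0$ would force $y^\zeta=h_1(x)$ with $h_1$ as in \eq{h_1} and hence $h_1(x)<\tfrac1x$, i.e.\ $\alpha(\tfrac1x,a)<0$, which for $a\ge\tfrac14$ forces $x<1$ --- impossible; moreover, with \eq{M1} and \eq{M2} this pins the sign of $H$ on $\mathrm{IIIb1}$, giving $F>1$ there. On $\mathrm{IIIb2}$ I would invoke the analysis of $y\mapsto H(x,y)$ on $(\tfrac1x,1)$ from Section \ref{del_H}: for $a\ge\tfrac13$ it gives $H>0$ throughout $\mathrm{IIIb2}$, so $\{F=1\}$ misses $\mathrm{IIIb}$ and $F>1$ there; for $\tfrac14<a<\tfrac13$ it produces exactly one root $y=h(x)\in(\tfrac1x,1)$ for each $x>\bar x$ and none for $x\le\bar x$, $C^2$ by \eq{2nd}, the threshold $\bar x$ being forced by continuity of the single curve $\{H=0\}$ across $y=1$ at $(\bar x,1)$ (the $x$-intercept of $j$), so that $h(\bar x)=1$. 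Finally $h(x)>k(x)$ on $\mathrm{IIIb}$ follows exactly as in Step 3 of the proof of Proposition \ref{<5/3}: substituting $\ba\psi(y)=\bigl(\ba\psi(xy)-\ba\psi(x)\bigr)/M(x)$ into \eq{K} and using the identity $\fa\psi(\tfrac1x)=-\ba\psi(x)/M(x)$ yields $K(x,y)=\ba\psi(\tfrac1{xy})M(y)-\ba\psi(xy)/M(x)<0$ on $\{H=0\}\cap\mathrm{IIIb2}$ (since $xy>1\Rightarrow\ba\psi(xy)>0$ and $\tfrac1{xy}<1\Rightarrow\ba\psi(\tfrac1{xy})<0$), whence $y>k(x)$ by Proposition \ref{III_K_partials}.

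On the lower part of $\mathrm{IIIc}$, namely $\{0<x<1,\ 1<y<\tfrac1x\}$, one has $M(x)=x^\zeta$ and the rarefaction forms $\ba\psi(x)=\nu(x^\zeta-1)$, $\ba\psi(xy)=\nu(x^\zeta y^\zeta-1)$, and a one-line cancellation collapses \eq{HH} to $H(x,y)=x^\zeta\,\alpha(y,a)$ with $\alpha=\fa\psi-\ba\psi$; hence $H$ and $\alpha(y,a)$ have the same sign, and by the sign properties of $\alpha$ on $(1,\infty)$ established in Section \ref{alfa} (for $\gamma>\tfrac53$: $\alpha>0$ on $(1,y_0)$ and $\alpha<0$ on $(y_0,\infty)$) the zero set here is exactly the segment $\{y=y_0,\ 0<x<\tfrac1{y_0}\}$, giving (c3) and the sign (c1)--(c2) on it. On the upper part $\{0<x<1,\ xy>1\}$ the relevant landmark values of $H$ are $H(x,\tfrac1x)=x^\zeta\alpha(\tfrac1x,a)$ (positive iff $\tfrac1x\in(1,y_0)$, i.e.\ iff $x>\tfrac1{y_0}$), $H(x,y_0)=-\alpha(xy_0,a)<0$ for $\tfrac1{y_0}<x<1$ (a one-line simplification using $\alpha(y_0,a)=0$, i.e.\ $\ba\psi(y_0)=\fa\psi(y_0)=\nu(y_0^\zeta-1)$; note $1<xy_0<y_0$), and $H(x,y)\to-\infty$ as $y\uparrow\infty$ (now $\sqrt x<x^\zeta$ for $x<1$); feeding these into the intermediate value theorem together with the monotonicity/convexity information for $H$ from Sections \ref{del_H}--\ref{eta} gives a unique root $y=i(x)\in(\tfrac1x,y_0)$ for $\tfrac1{y_0}<x<1$, $C^2$ by \eq{2nd}, which tends to $y_0$ as $x\downarrow\tfrac1{y_0}$ and so extends the horizontal segment into one connected curve; the remaining bound $i(x)<\bar y/x$ in (c4) reduces to the one-variable inequality $H(x,\bar y/x)<0$ on $(\tfrac1{y_0},1)$, which vanishes at $x=1$ since $H(1,\bar y)=0$ and is checked to be negative for $x<1$. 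I expect the extraction of the number of roots and of the monotonicity/convexity of the implicit branches on $\mathrm{IIIb2}$ and on the upper part of $\mathrm{IIIc}$ from the derivative estimates in Sections \ref{del_H}--\ref{eta} to be the main obstacle; once the landmark values of $H$ (at $y=\tfrac1x$, $y=y_0$, $y=\bar y/x$, and $y\uparrow\infty$) are in hand, the global picture and the sandwich bounds follow, and the three local descriptions then match across $y=1$ at $(\bar x,1)$, across $x=1$ at $(1,\bar y)$, and across $xy=1$ at $(\tfrac1{y_0},y_0)$, yielding the single connected curve $\{F=1\}$ of Figures 13--14.
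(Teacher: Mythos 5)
Your proposal is correct and follows essentially the same route as the paper: the criterion \eq{F_crit}, the same decomposition into IIIa, IIIb1, IIIb2, IIIc1, IIIc2, the explicit solution of $H=0$ on IIIa yielding \eq{j}, the collapse $H(x,y)=x^\zeta\alpha(y,a)$ on $\{xy\le 1\}\cap\mathrm{IIIc}$, the reuse of Step~2 of Proposition \ref{=5/3} and Step~3 of Proposition \ref{<5/3}, and the deferral of the hard monotonicity facts to Sections \ref{H_in_IIIa}, \ref{del_H}, and \ref{eta}. One caveat: on IIIc2 you assert a \emph{unique} root of $y\mapsto H(x,y)$ on each vertical line, but Section \ref{eta} controls $H$ only along the hyperbolas $xy=\bar C$ (one sign change of the directional derivative per hyperbola); the paper assembles the curve $y=i(x)$ and the bound $i(x)<\bar y/x$ by counting zeros hyperbola-by-hyperbola rather than by a vertical-line intermediate value argument, and you should phrase that step accordingly.
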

\begin{proof} We consider each region in turn:
	\begin{itemize}
		\item[(a)] IIIa: The set $\{F=1\}$ is the zero-level of the function $H(x,y)$ whose behavior on IIIa
		is analyzed in Section \ref{H_in_IIIa}. The expression in \eq{j} is what results from solving \eq{H_pos} 
		(with equality) for $y$ in terms of $x$. A calculation shows that this is a decreasing function of $x>1$, and that 
		it has the intersection and asymptotic properties as described above. The conclusions follows 
		from this together with \eq{F_crit} and \eq{H_pos}.
		\item[(b)] IIIb: As shown in the proof of part (b) of Proposition \ref{=5/3} (step 2) the set $\{F=1\}$ does 
		not meet subregion IIIb1$:=\{0<y<xy<1<x\}$ whenever $a>\frac{1}{4}$. Also, according to the analysis in Section \ref{del_H},
		$\{F=1\}$ meets subregion IIIb2$:=\{0<y<1<xy<x\}$ if and only if $\frac{1}{4}<a< \frac{1}{3}$. The properties in (b1) and (b2) 
		follow from \eq{3rd} and \eq{4th}. The fact that $y=h(x)$ (denoted $h_2(x)$ in
		Section \ref{del_H}) lies above $y=k(x)$ is proved as in part (b) of Proposition \ref{<5/3} (step 3).
		\item[(c)] IIIc: It is convenient to consider separately the two sub-domains
		\[\mathrm{IIIc1}:=\{0<x<xy\leq 1<y\}\qquad\text{and}\qquad \mathrm{IIIc2}:=\{0<x<1<xy<y\}\,.\]
		In IIIc1 we use the definition of $H$ and the explicit expressions for the auxiliary functions 
		$\ba\psi$ and $M$ to find that 
		\[H(x,y)=\ba\psi(xy)-\ba\psi(x)-\ba\psi(y)M(x)=x^\zeta\,\alpha(y,a)\,,\]
		where $\alpha:=\fa\psi-\ba\psi$ has a unique zero $y_0>1$ (since $\gamma>\frac{5}{3}$, 
		see analysis in Section \ref{alfa} and Figure 17). The properties of $\alpha$ then shows that
		the restriction of $H$ to IIIc1 satisfies
		\beq\label{H_sign1}
			H(x,y)\,\,\left\{\begin{array}{ll}
			<0 & \text{for $0<x<\frac{1}{y}$ and $y>y_0$}\\\\
			\equiv 0 & \text{along $(x,y_0)$, $0<x<\frac{1}{y_0}$}\\\\
			>0 & \text{for $0<x<\frac{1}{y}$ and $1<y<y_0$}\,.\\
			\end{array}\right.
		\eeq 
		In sub-region IIIc2 we consider instead how $H(x,y)$ varies as $(x,y)$ moves along 
		hyperbolas $\{xy=\bar C\}$ ($\bar C=$ constant) in the direction of increasing $y$-values. For this we make use 
		of the properties of the function $\eta(y,a)$, which is analyzed in Section \ref{eta}. 
		In the rest of this proof we assume $\bar C>1$. Let's define the directional derivative
		\[\mathfrak d(x,y):=(-x,y)\cdot \nabla_{(x,y)} H(x,y) = {\textstyle\frac{\kappa}{2}} x^\zeta \eta(y,a)\,,\]
		where $\eta(y,a)$ is defined in \eq{eta_def}. We first observe from \eq{eta_prop}-\eq{eta_def} that the leading 
		term in $\mathfrak d(\frac{\bar C}{y},y)$ for $y\gg 1$ is proportional to $-y^{\frac{1}{2}-\zeta}$.
		Since $\zeta<\frac{1}{2}$ this shows that $\mathfrak d(x,y)\big|_{\mathrm{IIIc2}}$ tends to 
		$-\infty$ as $y\uparrow\infty$. 
		
		We next consider the sign of $\mathfrak d(x,y)$ as we ``start out" along $\{xy=\bar C\}$ from 
		$(1,\bar C)$, in the direction of increasing $y$. As detailed in Section \ref{eta}, the sign 
		of $\mathfrak d(x,y)$ coincides with that of 
		\[\frac{\sqrt{a(1+a)}}{1-\sqrt{a}}-\sqrt{y+a}\,.\]
		Thus, if the constant $\bar C$ satisfies
		\[\frac{\sqrt{a(1+a)}}{1-\sqrt{a}}-\sqrt{\bar C+a}\leq 0\quad\Leftrightarrow\quad 
		\bar C\geq \bar y:=\frac{2a\sqrt{a}}{(1-\sqrt{a})^2}\,,\]
		then, since $H(1,\bar C)=0$, $H(x,y)<0$ along $\{xy=\bar C\}$. On the other hand, if $1\leq \bar C<\bar y$,
		then $H$ increases along $\{xy=\bar C\}$ as $y$ increases, until $y=\bar y$, after which it decreases
		to $-\infty$. We see from this that $H$ has:
		\begin{itemize}
			\item no zero along $\{xy=\bar C\}$ when $\bar C> \bar y$,
			\item a unique zero along $\{xy=\bar C\}$ when $1<\bar C\leq \bar y$.
		\end{itemize}
		It follows that the zeros of $H(x,y)$ in IIIc2 lie along a curve $y=i(x)$, where $i$ satisfies
		$i(\frac{1}{y_0})=y_0$ and $i(1)=\bar y$.
		It remains to argue that the graph $y=i(x)$ lies below the line $y=y_0$, and we do this
		by showing that $H(x,y_0)<0$ for $\frac{1}{y_0}<x<1$. Indeed, by using the property 
		$\fa\psi(y_0)=\ba\psi(y_0)$ together with the explicit expressions for the auxiliary functions 
		$\ba\psi$ and $M$, we have that
		\[\del_x H(x,y_0)=-\del_x\big(\alpha(xy_0,a)\big)\qquad\text{for $\frac{1}{y_0}<x<1$.}\]
		Integrating from $\frac{1}{y_0}$ to $x$ we obtain
		\[H(x,y_0)=-\alpha(xy_0,a)<0\,,\]
		where the latter inequality follows from the properties of $\alpha(\cdot,a)$ when 
		$a>\frac{1}{4}$.
	\end{itemize}	
\end{proof}

\begin{figure}\label{BandF=1_large_gamma}
	\centering
	\includegraphics[width=7.7cm,height=5.7cm]{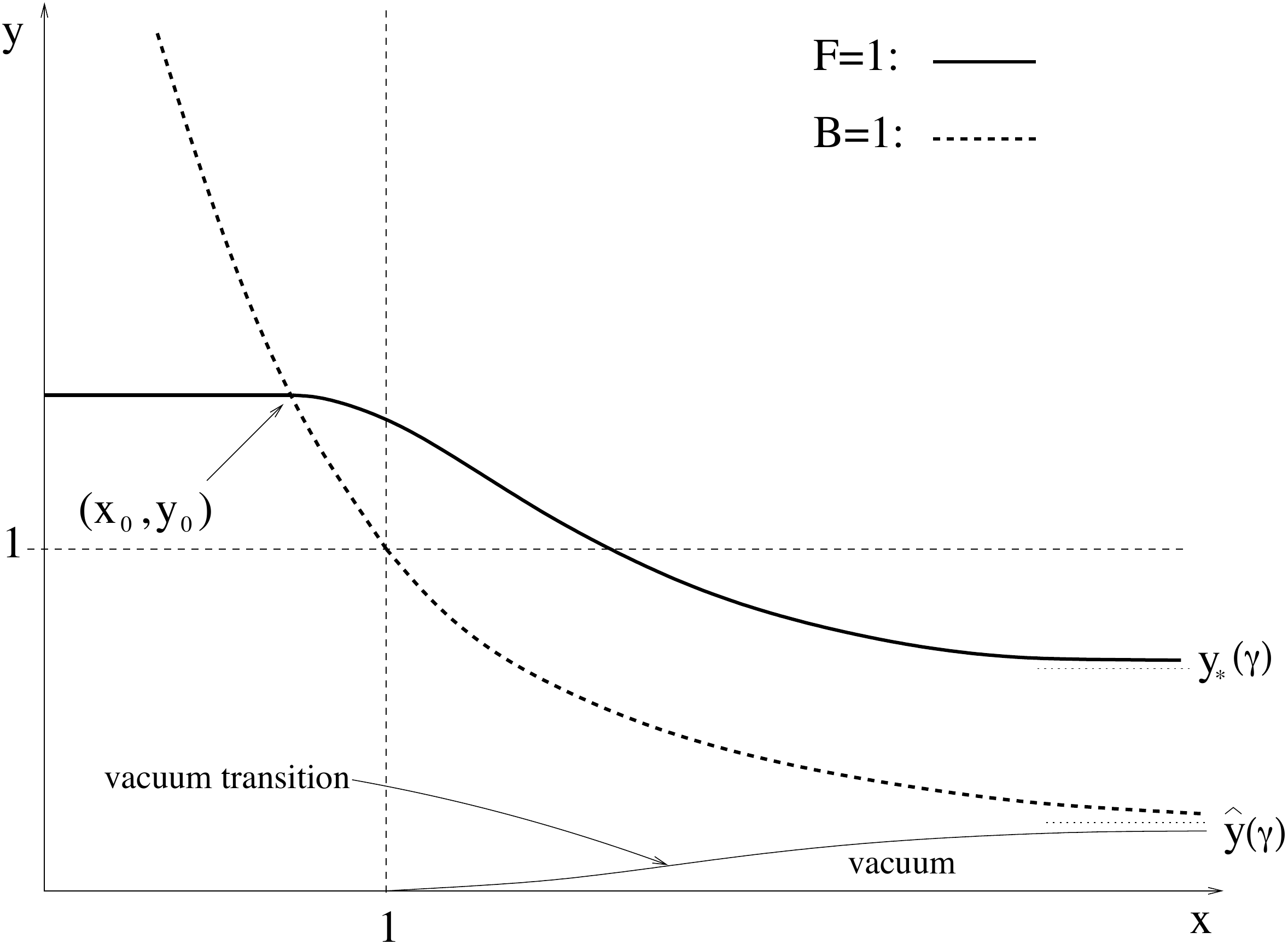}
	\caption{The sets $\{F=1\}$, $\{B=1\}$, and their intersection for $\frac{5}{3}<\gamma<2$ (schematic).}
\end{figure}
%

\begin{figure}
	\centering
	\includegraphics[width=7.7cm,height=5.7cm]{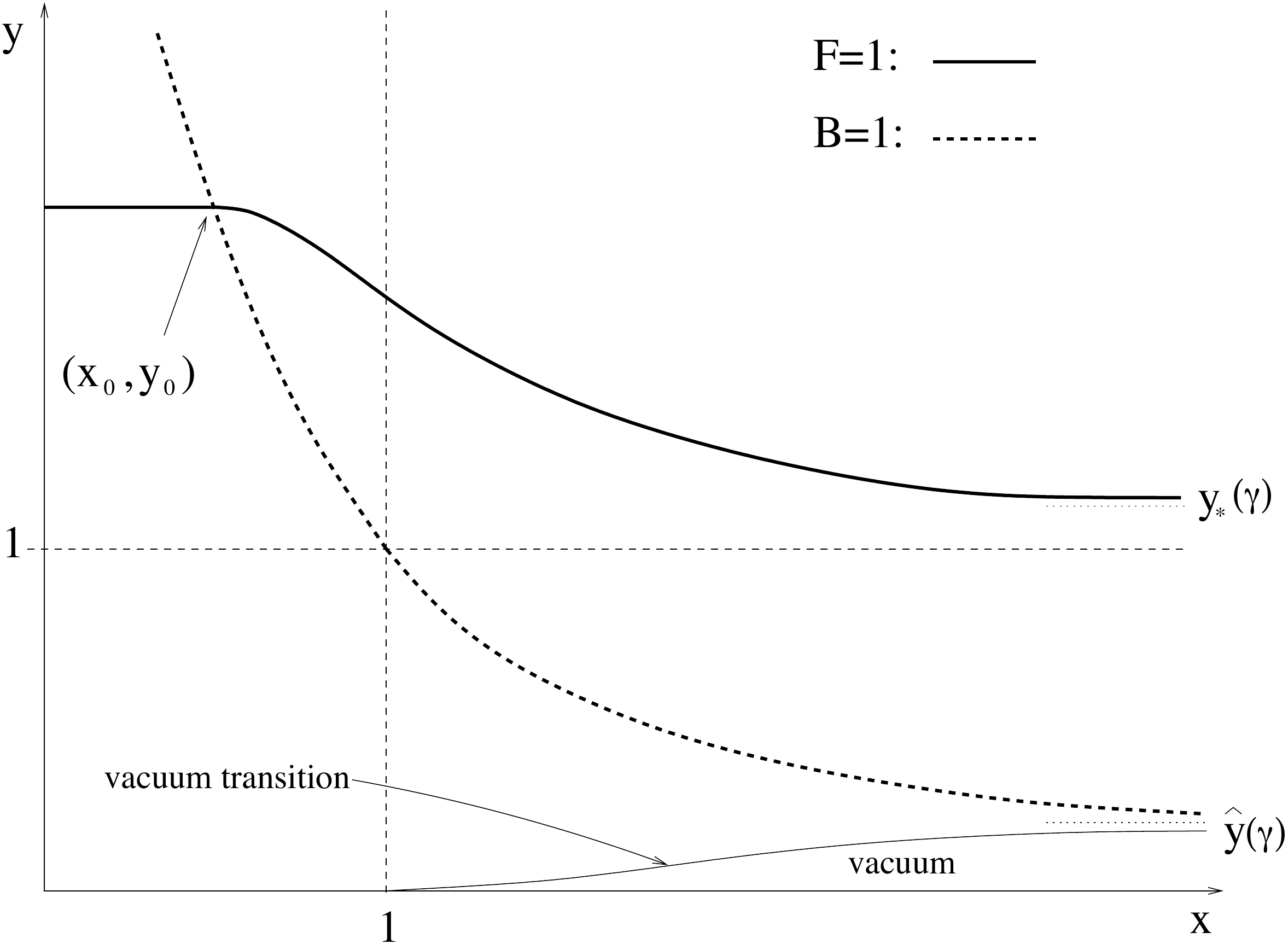}
	\caption{The sets $\{F=1\}$, $\{B=1\}$, and their intersection for $\gamma\geq 2$ (schematic).}
\end{figure}

\medskip

\section{Proof of Theorem \ref{sum_grIII} part $\mathrm{(iii)}$}\label{group3_contact}
To analyze the outgoing contact discontinuity in Group III interactions it is 
advantageous to work in $(\tau, u, S)$-space where we track two quantities, $\tau$ and $S$,
that change across contacts. The wave curves in these variables were 
recorded in Section \ref{bckgr}.

\subsection{Equations for outgoing waves in $(\tau, u, S)$-variables}
We refer to Group III interactions in Figure 1 and denote the specific volumes ratios
$\tau_{right}/\tau_{left}$ across the incoming waves by $\tx$
(leftmost) and $\ty$ (rightmost). We use capital letters $L$, $C$,
$I$ to denote the specific volume ratios across the outgoing backward,
contact, and forward waves, respectively.

We use the expressions for the wave curves in $(\tau, u, S)$-space to 
traverse the waves before and after interaction. From 
\eq{p_tau_u_S}-\eq{frwd_wave_tau_u_s} we obtain the
following equations for the outgoing strengths $L$, $I$, $C$:
\bea
    \ba\xi(\tx)+\ba\xi(\ty)\sqrt{\tx^{1-\gamma} \exp({\ba\eta(\tx)})} &=&
    \ba\xi(L)-\fa\xi(I)\sqrt{C L^{1-\gamma}\exp({\ba\eta(L)})}\label{IIIcon1},\\
    \ba\eta(\tx)+\ba\eta(\ty)  &=& \ba\eta(L)+\gamma\log C+\fa\eta(I)\label{IIIcon2},\\
    \tx\ty&=&C LI\,.\label{IIIcon3}
\eea
As for Groups I and II we consider IIIa, IIIb, and IIIc
interactions separately. 
We find it necessary to make a further breakdown and consider each
combination of outgoing extreme waves within each of IIIa, IIIb, and IIIc. For the 
most part the type of the outgoing contact follows readily from \eq{IIIcon1}-\eq{IIIcon3}
and the properties of the auxiliary functions. The only exception is the case 
$\ba S\ba S \rightarrow \ba S J \fa R$ which requires additional arguments.

\medskip

\subsection{Outgoing contact in IIIa-interactions}\label{C_IIIa}
These are $\ba S\ba S$-interactions, which corresponds to $a<\tx,\, \ty<1$, or, in terms
of incoming pressure ratios $x$, $y>1$. It follows from the analysis in Section \ref{part1_grIII} that, 
independently of the value of $\gamma$, the outgoing backward wave is a shock, i.e.\ $a<L<1$. 
There are therefore only two possibilities for the extreme outgoing waves in this case. (The analysis 
in Section \ref{group3_reflected} shows that both can occur when $\gamma>\frac{5}{3}.$) 
We treat them separately and show that the outgoing contact satisfies $C<1$ in both cases.

\medskip

\paragraph{\bf{Case 1:}} $\ba S J \fa S$. In this case $a<L<1<I<\frac{1}{a}$. 
By \eq{IIIcon2} and \eq{eta_fwd_bckwd} we have that
\beq\label{1_1}
	\Gamma(\tx)\Gamma(\ty)=\Gamma(L)\Gamma(I) C^\gamma\,.
\eeq
Assume for contradiction that $C\geq 1$; then \eq{1_1} together with \eq{Gamma_2} give
\[\Gamma(L)\leq \Gamma(\tx)\Gamma(\ty)\Gamma(I^{-1})\,.\]
Also, if $C\geq 1$ then \eq{IIIcon3} gives $\tx\ty\geq LI>a$. Since $a<\tx$, $\ty$, $\tx\ty<1$, 
Lemma \ref{Gamma_prop} gives $\Gamma(\tx)\Gamma(\ty)<\Gamma(\tx\ty)$,
such that
\[\Gamma(L)< \Gamma(\tx\ty)\Gamma(I^{-1})\,.\]
Again, since $C\geq 1$, \eq{IIIcon3} gives $\tx\ty I^{-1}=CL\geq L>a$, and we have
$a<\tx\ty$, $I^{-1}$, $\tx\ty I^{-1}<1$. Lemma \ref{Gamma_prop} applies and gives
$\Gamma(\tx\ty)\Gamma(I^{-1}) < \Gamma(\tx\ty I^{-1})$, such that
\[\Gamma(L)< \Gamma(\tx\ty I^{-1})\,.\]
As $\Gamma$ is strictly decreasing on $(a,\frac{1}{a})$, we conclude that $L>\frac{\tx\ty}{I}$.
However, by \eq{IIIcon3}, this implies that $C<1$ and we reach a contradiction. Thus $C<1$.

\medskip

\paragraph{\bf{Case 2:}} $\ba S J \fa R$. In this case $a<L<1$ and 
$I\leq1$. As $\tx<1$ and $\fa\xi(I)\geq 0$, \eq{IIIcon1} and \eq{eta_fwd_bckwd} give that 
\beq\label{2_1}
	\ba \xi(\tx)+\ba \xi(\ty)\sqrt{\Gamma(\tx)}
	<\ba \xi(\tx)+\ba \xi(\ty)\sqrt{\tx^{1-\gamma}\Gamma(\tx)}
	\leq \ba\xi(L)\,.
\eeq
Also, in this case \eq{eta_fwd_bckwd} shows that \eq{IIIcon2} reduces to
\[\Gamma(\tx)\Gamma(\ty)=\Gamma(L)C^\gamma\,.\]
Assuming, again for contradiction, that  $C\geq1$ we thus obtain 
\beq\label{IIIconSS_proof_2}
	\Gamma(\tx)\Gamma(\ty)\geq\Gamma(L)\,.
\eeq 
We proceed to show that this leads to a contradiction with \eq{2_1}.
For this we set $\Omega(s):=\sqrt{\Gamma(s)}$ ($s\in(a,1)$) and define the 
function $\Lambda:=\ba\xi(\Omega^{-1})$ which is analyzed in Section \ref{Gamma}.
Now let
\[z_1:=\Omega(\tx)>1,\qquad z_2:=\Omega(\ty)>1,\qquad z_3:=\Omega(L)>1\,,\]
where we have used that $\Gamma$ and hence $\Omega$ are decreasing. 
Hence \eq{IIIconSS_proof_2} reduces to
\beq\label{IIIconSS_proof_4}
	z_1z_2\geq z_3\,,
\eeq
and Lemma \ref{Lambda_prop} gives
\[\Lambda(z_1)+z_1\Lambda(z_2) \geq \Lambda(z_1z_2).\]
Thus, by \eq{IIIconSS_proof_4} and the fact that $\Lambda(z)$ is 
an increasing function, we obtain $\Lambda(z_1)+z_1\Lambda(z_2) \geq\Lambda(z_3)$,
or equivalently:
\beq 
	\ba\xi(\tx)+\sqrt{\Gamma(\tx)}\ba\xi(\ty)
	\geq \ba\xi(L)\,.\label{IIIconSS_proof_contra}
\eeq 
This contradicts \eq{2_1}, and we conclude that $C<1$.

This establishes the first statement in part (iii) of Theorem \ref{sum_grIII}.

\medskip

\subsection{Outgoing contact in IIIb-interactions}
These are $\ba S\ba R$-interactions for which $a<\tx<1<\ty$. There are now four 
possible combinations of outgoing forward and backward waves. (The analysis in Section 
\ref{group3_reflected} shows that they can all occur when $\gamma>\frac{5}{3}$). 
We demonstrate that the outgoing contact discontinuity $C$ always satisfies $C>1$ 
by considering each case separately.

\medskip

\paragraph{\bf Case 1:}  $\ba R {J} \fa S $. In this case $L\geq1$, $1\leq I<\frac{1}{a}$
and \eq{eta_fwd_bckwd} shows that \eq{IIIcon2} reduces to
\[\Gamma(\tx)=\Gamma(I)C^\gamma\,.\] 
Since $a<\tx <1\leq I$, \eq{Gamma_1} shows that $\Gamma(\tx)>\Gamma(I)$, such that $C>1$.

\medskip

\paragraph{\bf Case 2:} $\ba R {J} \fa R $. In this case $L\geq1,\ I\leq1$ and 
\eq{eta_fwd_bckwd} shows that \eq{IIIcon2} reduces to
\[\Gamma(\tx)=C^\gamma.\]
Since $a<\tx <1$, \eq{Gamma_1} shows that $\Gamma(\tx)>1$, such that $C>1$.

\medskip

\paragraph{\bf Case 3:} $\ba S {J} \fa R $. In this case $a<L\leq1,\ I\leq1$ and 
\eq{eta_fwd_bckwd} shows that \eq{IIIcon2} reduces to
\[\Gamma(\tx)=\Gamma(L)C^\gamma.\]
We now argue by contradiction: if $C\leq1$ then 
$\Gamma(\tx)\leq\Gamma(L)$, and \eq{Gamma_1} shows that $L\leq \tx$. 
However, combining this with $C\leq 1<\ty$ and \eq{IIIcon3} gives
\[L<L\ty\leq \tx\ty=CLI\leq LI,\] 
contradicting $I\leq1$. Hence $C>1$.

\medskip

\paragraph{\bf Case 4:} $\ba S {J} \fa S$. In this case $a<L<1<I<\frac{1}{a}$ and
\eq{IIIcon2}, \eq{eta_fwd_bckwd}, and \eq{Gamma_1} give
\beq\label{reln_gam}
	\Gamma(\tx)=\Gamma(L)\Gamma(I)C^\gamma< \Gamma(L)C^\gamma\,. 
\eeq
>From \eq{IIIcon1} and the fact that $\ba\xi(\ty)$, $\fa\xi(I)<0$, 
we obtain that $\ba\xi(\tx)>\ba\xi(L)$. As $\ba\xi$ is strictly decreasing, we infer that 
$\tx<L$. Thus $\Gamma(\tx)>\Gamma(L)$, which combined with \eq{reln_gam} gives 
$C>1$.

\medskip

\subsection{Outgoing contact in IIIc-interactions}
These are $\ba R\ba S$-interactions for which $a<\ty<1<\tx$. As for 
$\ba S\ba R$-interactions there are four possible combinations of extreme outgoing waves. 
(The analysis in Section \ref{group3_reflected} shows that they can all occur when 
$\gamma>\frac{5}{3}$). We claim that the outgoing contact discontinuity $C$ 
always satisfies $C>1$. Considering the same cases as for $\ba S\ba R$-interactions 
it turns out that the arguments for Cases 1, 2, and 3 for $\ba R\ba S$-interactions are 
identical to those for $\ba S\ba R$-interactions, upon interchanging $\tx$ and $\ty$.
We therefore only need to consider Case 4, $\ba S {J} \fa S$, where the outgoing 
strengths satisfy $a<L<1<I<\frac{1}{a}$. As above we use \eq{IIIcon2}, \eq{eta_fwd_bckwd}, 
and \eq{Gamma_1} to obtain
\beq\label{gam_reln2}
	\Gamma(\ty)=\Gamma(L)\Gamma(I)C^\gamma< \Gamma(L)C^\gamma\,.
\eeq
Since $\tx>1$ we have $\ba\xi(\tx)<0$ and $\sqrt{\tx^{1-\gamma}\exp(\ba\eta(\tx))}<1$.
At the same time $\fa\xi(I)<0$, and we get from \eq{IIIcon1} that $\ba\xi(\ty)>\ba\xi(L)$. 
As $\ba\xi$ is strictly decreasing we have $\ty<L$ and \eq{gam_reln2} yields $C>1$.

With this we have established the second statement in part (iii) of Theorem \ref{sum_grIII}.

\medskip

\section{{Proof of Theorem \ref{sum_grIII} part $\mathrm{(iv)}$}}\label{group3_vac}
For this part of the proof we use the pressure ratios $x$ and $y$ of the incoming waves.
We argue as in Section \ref{rip_vac} and observe that the map $B\mapsto \cK(B;x,y)$ 
is strictly increasing (see \eq{cK} for the definition of the function $\cK$). 
The interaction Riemann problem is vacuum-free if and only if
$B=B(x,y)>0$, or equivalently $\cK(B;x,y)=0>\cK(0;x,y)$, i.e.
\beq\label{III_no_vac}
	\cK (0;x,y)=-\nu-\nu M(x)M(y)-\ba\psi(x)-\ba\psi(y)M(x)<0\,.
\eeq
Rearranging the last expression we have that the overtaking-wave interaction produces 
no vacuum if and only if the incoming parameters $x$ and $y$ satisfy
\beq\label{III_no_vac0}
	-\nu M(y)-\ba\psi(y)< \frac{\nu + \ba\psi(x)}{M(x)}\,.
\eeq
Since the function $\ba\psi$ is strictly increasing with $\ba\psi(0)=-\nu$ and $\ba\psi(1)=0$,
it follows that the inequality \eq{III_no_vac0} is satisfied whenever $y>1$. Consequently,
a vacuum is never generated in $\ba S\ba S$ (IIIa) and $\ba R\ba S$ (IIIc) interactions.

On the other hand, depending on the incoming pressure ratios $x$ and $y$, a 
vacuum may or may not emerge from an $\ba S\ba R$ (IIIb) interaction. In this case 
$y<1<x$ and \eq{III_no_vac0} takes the explicit form
\beq\label{no_vac1}
	v(x):=\frac{\nu\sqrt{x+a}+\kappa(x-1)}{\sqrt{x+a x^2}}>\nu(1-2y^\zeta)\,.
\eeq
\begin{lemma}\label{III_vac}
	The function $v(x)$ is strictly decreasing for $x>1$, for all values of $\gamma>1$.
\end{lemma}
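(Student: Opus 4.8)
The plan is to prove the stronger statement that $(\log v)'(x)<0$ for all $x>1$; since $v>0$ this is equivalent to the claimed strict monotonicity. Write $v=N/D$ with
\[
N(x):=\nu\sqrt{x+a}+\kappa(x-1),\qquad D(x):=\sqrt{x+ax^2},
\]
both positive for $x>1$, so that $(\log v)'=(\log N)'-(\log D)'$. A direct computation gives $(\log D)'(x)=\frac{1+2ax}{2x(1+ax)}$ and
\[
(\log N)'(x)=\frac{\nu+2\kappa\sqrt{x+a}}{2\nu(x+a)+2\kappa(x-1)\sqrt{x+a}},
\]
and the inequality $(\log N)'<(\log D)'$ is obtained by cross-multiplying these two positive denominators. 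Expanding, collecting the terms that carry the irrational factor $\sqrt{x+a}$ separately from those that do not (using $x(1+ax)-(1+2ax)(x+a)=-a(x^2+2ax+1)$ and $2x(1+ax)-(1+2ax)(x-1)=(1+2a)x+1$), and then invoking the identities $\kappa=\sqrt{1-a}$ and $\nu a=\sqrt{(1-a)(1+a)}$ — so that $\nu a/\kappa=\sqrt{1+a}$ — the desired inequality reduces to
\[
\sqrt{1+a}\,\bigl(x^2+2ax+1\bigr)>\sqrt{x+a}\,\bigl((1+2a)x+1\bigr),\qquad x>1.
\]

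Both sides here are strictly positive for $x>0$, so this is equivalent to the polynomial inequality $P(x)>0$ on $(1,\infty)$, where
\[
P(x):=(1+a)\bigl(x^2+2ax+1\bigr)^2-(x+a)\bigl((1+2a)x+1\bigr)^2 .
\]
Expanding shows that $P$ is the quartic $P(x)=(1+a)x^4-x^3-3ax^2+(2a-1)x+1$. One checks directly that $P(1)=P'(1)=0$, so $(x-1)^2$ divides $P$, and carrying out the division yields the factorization
\[
P(x)=(x-1)^2\bigl[(1+a)x^2+(1+2a)x+1\bigr].
\]
Since $0<a<1$ by \eq{a}, the quadratic factor has all coefficients positive and is therefore positive for every $x\ge 0$; hence $P(x)>0$ for all $x\ne 1$, in particular for $x>1$. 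Unwinding the equivalences gives $(\log v)'(x)<0$ on $(1,\infty)$, proving the lemma.

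The computations are elementary throughout; the only point requiring care is the bookkeeping in passing from the rational inequality for $(\log v)'$ to the polynomial inequality $P>0$ — one must track signs when cross-multiplying (legitimate since all relevant quantities are positive for $x>1$) and segregate the terms containing $\sqrt{x+a}$. The observation that makes everything transparent is the double root of $P$ at $x=1$, which forces the displayed factorization; note that $x=1$ gives equality in the displayed inequality (consistent with $v(1)=\nu$ and $v$ extending continuously there), so the strict decrease holds precisely on the open half-line $x>1$ asserted in the lemma.
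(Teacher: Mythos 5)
Your proof is correct and takes essentially the same route as the paper: both differentiate, separate the terms with coefficients $\nu$ and $\kappa$, and arrive at the same key inequality $\sqrt{(x+a)/(1+a)}<(x^2+2ax+1)/(1+(1+2a)x)$ for $x>1$. The only difference is the final verification — you square and factor the resulting quartic as $(x-1)^2\bigl[(1+a)x^2+(1+2a)x+1\bigr]$, while the paper instead sandwiches the linear function $\frac{x-1}{2(1+a)}+1$ between the two sides; both are valid elementary finishes.
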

\bp
	Differentiating  and collecting terms with coefficients $\nu$ and $\kappa$, respectively,
	we obtain that $v'(x)<0$ (for $x>1$) if and only if 
	\[\sqrt{\frac{x+a}{1+a}}<\frac{1+2a x+x^2}{1+(1+2a)x}\,.\]
	This relation holds since the right- and left-hand sides are 
	separated by a linear function:
	\[\sqrt{\frac{x+a}{1+a}}<\frac{x-1}{2(1+a)}+1
	<\frac{1+2a x+x^2}{1+(1+2a)x}\qquad\text{for $x>1$}\,.\]
\ep
Thus, a vacuum appears in a IIIb interaction if and only if the incoming strengths $y<1<x$ 
satisfy
\beq\label{IIIb_vac}
	0<y\leq V(x):=\left[\textstyle\frac{1}{2}\left(1-\frac{v(x)}{\nu}\right)\right]^\frac{1}{\zeta}
	\qquad\text{where $\zeta= \textstyle\frac{\gamma-1}{2\gamma}$,}
\eeq
and $v(x)$ is given by \eq{no_vac1}.
A calculation shows that the strictly increasing function $V(x)$ satisfies $V(1)=0$ and 
\beq\label{yhat}
	\lim_{x\uparrow\infty} V(x)
	=\left[\textstyle\frac{1}{2}\left(1-\sqrt{\zeta}\right)\right]^\frac{1}{\zeta}=:\hat y(\gamma)<1\,.
\eeq
See Figures 5, 6, 8, 9-14 for schematic plots of the vacuum transition-curve $y=V(x)$. 
As indicated in these figures the horizontal asymptote $\hat y(\gamma)$ of $y=V(x)$
coincides with the horizontal asymptote of the transition curve $\{B=1\}$ in 
$\ba S\ba R$ (IIIb) interactions.

\medskip

This completes the proof of Theorem \ref{sum_grIII}.

\medskip

\section{Definitions and properties of auxiliary functions}\label{aux}

\subsection{Auxiliary functions for wave curves}
The functions $\ba\phi$, $\fa\phi$, $\ba\psi$, $\fa\psi$ were defined in 
Section \ref{bckgr}. A calculation shows that they are all $C^2$ functions 
with Lipschitz continuous 2nd derivatives. Furthermore:
\begin{itemize}
	\item $\ba \phi(q)$ is strictly decreasing, tends to $+\infty$ as $q\downarrow 0$,
	tends to $a$ as $q\to\infty$, and $\ba \phi(1)=1$.
	\item $\fa \phi(q)$ is strictly decreasing, tends to $\frac{1}{a}$ as $q\downarrow 0$,
	tends to $0$ as $q\to\infty$, and $\fa \phi(1)=1$.
	\item $\ba \psi(q)$ is increasing, tends to $-\nu$ (with infinite slope) as 
	$q\downarrow 0$, tends to $+\infty$ as $q\to\infty$, and $\ba \psi(1)=0$.
	\item $\fa \psi(q)$ is increasing, tends to $-\sqrt{\frac{1-a}{a}}$ (with finite 
	slope) as $q\downarrow 0$, tends to $+\infty$ as $q\to\infty$, and $\fa \psi(1)=0$.
\end{itemize}
For reference we record the relation
\beq\label{rel3}
	\sqrt{q\, {\fa{\phi}}(q)}
	{\ba{\psi}}\big({\textstyle\frac{1}{q}}\big)
	=-{{\fa{\psi}}(q)}\,.
\eeq

\medskip

\subsection{The functions $M$, $N$, $m$, $n$, $\ell$, $A$, $D$ and $E$, and their properties}
\label{Metc} In this section we define a number of auxiliary functions and list some 
useful properties. To verify these requires mostly routine calculations which are not 
included. We recall that the parameter $a\in(0,1)$ is defined in \ref{a}.
Define the functions $M$ and $N$ by
\beq\label{M}
	M(q):= -\frac{\ba\psi(q)}{\fa\psi\big(\frac{1}{q}\big)}\equiv \sqrt{q\ba\phi(q)}=
	\left\{\begin{array}{ll}
	q^\zeta \qquad & 0<q<1\\\\
	\sqrt{\frac{q+a q^2}{q+a}}\qquad & q>1
	\end{array}\quad\right\}
\eeq
and
\beq\label{N}
	N(q):= -\frac{\fa\psi(q)}{\ba\psi\big(\frac{1}{q}\big)}\equiv \sqrt{q\fa\phi(q)}=
	\left\{\begin{array}{ll}
	\sqrt{\frac{q+a q^2}{q+a}} \qquad & 0<q<1\\\\
	q^\zeta\qquad & q>1
	\end{array}\quad\right\}\,.
\eeq
Both $M(q)$ and $N(q)$ take the value $1$ at $q=1$, are 
increasing and convex down, tend to zero as $q\downarrow 0$ (with infinite slope), 
and tend to $+\infty$ as $q\to \infty$. 
\begin{lemma}\label{extra1}
	For $x>1$ the function $M(x)$ defined in \eq{M} satisfies the following 
	inequalities for all values of $\gamma>1$:
	\bea
		&& M(x)=\sqrt{\frac{x+ax^2}{x+a}}\, >\, x^\zeta \label{M1}\\
		&& M(x)=\sqrt{\frac{x+ax^2}{x+a}}\, >\, 1+\frac{\kappa(x-1)}{\nu\sqrt{x+a}}\,. \label{M2}
	\eea
\end{lemma}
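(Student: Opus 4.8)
The plan is to reduce both inequalities to elementary algebraic facts, all of which hinge on the identity $M(x)^2=\frac{x(1+ax)}{x+a}$ for $x>1$ (from the definition \eq{M}). For \eq{M2} both sides are positive when $x>1$, so I would simply square twice; the resulting polynomial inequality will turn out to be a perfect square $(x-1)^2>0$. For \eq{M1} the obstacle is the irrational power $x^{2\zeta}$, which does not cooperate with squaring; there I would instead introduce $F(x):=\log M(x)-\zeta\log x$ and prove $F>0$ on $(1,\infty)$ by differentiation. (One cannot get \eq{M1} from a crude estimate: in fact $M(x)^2<x$, so $x^\zeta<M(x)<x^{1/2}$ and the genuine inequality is needed.)

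For \eq{M1}, write $F(x)=\tfrac12\bigl[\log x+\log(1+ax)-\log(x+a)\bigr]-\zeta\log x$, so \eq{M1} is equivalent to $F(x)>0$ for $x>1$; since $F(1)=0$ it is enough that $F'(x)>0$ on $(1,\infty)$. A short computation, using $\zeta=\tfrac a{1+a}$ and combining the $\tfrac1x-\tfrac1{x+a}$ terms, gives
\[
 xF'(x)=\frac a2\Bigl(\frac1{x+a}+\frac x{1+ax}\Bigr)-\frac a{1+a}
 =\frac a2\bigl(g(x)-g(1)\bigr),\qquad g(x):=\frac1{x+a}+\frac x{1+ax},
\]
since $g(1)=\tfrac2{1+a}$. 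Then $g'(x)=\frac1{(1+ax)^2}-\frac1{(x+a)^2}$, which is positive for $x>1$ because $(x+a)-(1+ax)=(1-a)(x-1)>0$; hence $g(x)>g(1)$ for $x>1$, so $F'(x)>0$ and \eq{M1} follows. (Note $M(1)=1=1^\zeta$, so the inequality is sharp only at the endpoint.)

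For \eq{M2}, I would first rewrite the right-hand side using $\kappa/\nu=a/\sqrt{1+a}$ (from $\kappa=\sqrt{1-a}$, $\nu=\sqrt{1-a^2}/a$), so the claim becomes $M(x)>1+\dfrac{a(x-1)}{\sqrt{(1+a)(x+a)}}$, with both sides positive for $x>1$. Squaring and moving the single surd term to one side, \eq{M2} is equivalent to $A-B>C$, where $A=\frac{x(1+ax)}{x+a}$, $B=1+\frac{a^2(x-1)^2}{(1+a)(x+a)}$ and $C=\frac{2a(x-1)}{\sqrt{(1+a)(x+a)}}$. A direct simplification yields $A-B=\dfrac{a(x-1)(x+1+2a)}{(1+a)(x+a)}>0$ for $x>1$; cancelling the common positive factor $a(x-1)$ and clearing the remaining root reduces the claim to $x+1+2a>2\sqrt{(1+a)(x+a)}$, and squaring once more leaves
\[
 (x+1+2a)^2-4(1+a)(x+a)=(x-1)^2>0\qquad(x>1),
\]
which finishes the proof.

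The computations are routine; the only points that need care are the sign bookkeeping before each squaring in \eq{M2} (in particular verifying $A-B>0$ so that the second squaring is reversible) and, in \eq{M1}, the decision to work with $F'$ rather than attempt to square the irrational power. In that sense there is no real obstacle: after the correct reduction, \eq{M2} collapses to $(x-1)^2\ge 0$ and \eq{M1} to $(1-a)(x-1)\ge 0$.
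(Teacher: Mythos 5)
Your proposal is correct. For \eq{M2} your argument is essentially the paper's: after the substitution $\kappa/\nu=a/\sqrt{1+a}$, squaring, cancelling the factor $a(x-1)$ (your intermediate identity $A-B=\frac{a(x-1)(x+1+2a)}{(1+a)(x+a)}$ checks out, and it also supplies the positivity needed to make the second squaring reversible), you arrive at $x+1+2a>2\sqrt{(1+a)(x+a)}$, which is exactly the paper's inequality $(1-\zeta)x+(1+\zeta)>2\sqrt{\tfrac{x+a}{1+a}}$ multiplied through by $1+a$, and the final squaring gives $(x-1)^2>0$ in both treatments. For \eq{M1} you take a genuinely different route: the paper squares and reduces to showing $f(x):=x+ax^2-x^{2\zeta+1}-ax^{2\zeta}>0$ for $x>1$, which it gets from $f(1)=f'(1)=f''(1)=0$ together with $f'''>0$ on $(1,\infty)$; you instead set $F(x)=\log M(x)-\zeta\log x$ and show $F(1)=0$, $xF'(x)=\tfrac a2\bigl(g(x)-g(1)\bigr)$ with $g(x)=\tfrac1{x+a}+\tfrac x{1+ax}$ increasing on $(1,\infty)$ because $(x+a)>(1+ax)$ there. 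Your computation is right (the key cancellation $1-\tfrac{x}{x+a}=\tfrac{a}{x+a}$ and $\zeta=\tfrac a{1+a}$ give exactly the displayed identity), and it buys a first-derivative argument with a transparent monotonicity reason in place of the paper's third-derivative bookkeeping on a function with irrational exponents; the paper's version, on the other hand, stays entirely within one squaring-and-Taylor-at-$1$ template for both inequalities.
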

\begin{proof} 
	By squaring both sides in \eq{M1} and rearranging we obtain the equivalent 
	condition that the function
	\[f(x):=x+ax^2-x^{2\zeta+1}-ax^{2\zeta}\]
	satisfies $f(x)>0$ for $x>1$. A calculation shows that $f'''(x)>0$ 
	for $x>1$, while $f(1)=f'(1)=f''(1)=0$. This proves \eq{M1}.
	By squaring, rearranging, and canceling a factor of $(x-1)$ 
	we obtain that \eq{M2} holds if and only if 
	\[(1-\zeta)x+(1+\zeta)>2\sqrt{\frac{x+a}{1+a}}\,.\]
	Squaring again and simplifying gives that this holds if and only if $(x-1)^2>0$.
\end{proof}
\noindent Define the function $m$ by
\beq\label{m}
	m(q):=\frac{qM'(q)}{M(q)}
	\left\{\begin{array}{ll}
	\equiv \zeta \qquad & 0<q<1\,,\\\\
	= \frac{a  (1+2a q+q^2)}{2(q+a)(1+a q)}\qquad & q>1\,.
	\end{array}\right.
\eeq
A calculation shows that $m$ is non-decreasing and with range $[\zeta,\frac{1}{2})$.
Hence the function
\beq\label{n}
	n(q):=\frac{qN'(q)}{N(q)}=m\big({\textstyle\frac{1}{q}}\big) \qquad (q>0)
\eeq
is non-increasing and with range $[\zeta,\frac{1}{2})$. Define the function 
\beq\label{ell}
	\ell(q):=\frac{q\ba\psi {'}(q)}{\ba\psi(q)}
	=\left\{\begin{array}{ll}
	\frac{\zeta q^\zeta}{q^\zeta-1} \qquad & 0<q<1\,,\\\\
	\frac{q(q+2a +1)}{2(q-1)(q+a)}\qquad & q>1\,.
	\end{array}\right.
\eeq
A calculation shows that $\ell$ is decreasing, has a vertical asymptote at $q=1$, and satisfies
$\ell(q)>\frac{1}{2}$ for all $q>1$.
Define the function $A$ by
\beq\label{Axi}
	A(q,\xi):= \frac{1+aq +a^2\xi}{a^2+a q +\xi}\qquad (q,\, \xi>0)\,.
\eeq
A calculation shows that
\beq\label{Aa_prop}
	\del_q A(q,\xi)\gtrless 0\quad \forall q>0 \qquad\Leftrightarrow\qquad \xi\gtrless 1\,.
\eeq
Define the function $D$ by 
\beq\label{D}
	D(q):=q^\frac{1}{\gamma}\left(\frac{1+aq}{q+a}\right)\qquad (q>0)\,.
\eeq
A calculation shows that $D$ is strictly increasing, $D(1)=1$, and that $D(q)\uparrow\infty$ as 
$q \uparrow\infty$.
\begin{lemma}\label{E_lem}
	Define the function $E$ by
	\beq\label{E}
		E(x):=\frac{1+ax}{x+a} \qquad x>0\,.
	\eeq
	Then
	\begin{itemize}
		\item[(a)] for $0<x<1<y$:\qquad $E(xy)>E(x)E(y) \quad\Leftrightarrow\quad xy<1$,
		\item[(b)] for $0<x,\, y<1$:\qquad $E(xy)<E(x)E(y)$.
	\end{itemize}
\end{lemma}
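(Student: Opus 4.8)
The plan is to reduce both parts of the lemma to the sign of a single polynomial. Since $a\in(0,1)$, each of $1+ax$ and $x+a$ is positive for $x>0$, so $E(x)=\frac{1+ax}{x+a}>0$ for all $x>0$. Hence, after multiplying through by the positive quantity $(xy+a)(x+a)(y+a)$, the sign of $E(xy)-E(x)E(y)$ equals the sign of
\[
P(x,y):=(1+axy)(x+a)(y+a)-(xy+a)(1+ax)(1+ay),
\]
and everything comes down to determining when $P(x,y)$ is positive, negative, or zero.

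The key step is to establish the factorization
\[
P(x,y)=a(1-a)\,(x-1)(y-1)(xy-1).
\]
I would verify this either by brute-force expansion of the two triple products (routine), or, more cleanly, by observing that $P$ vanishes identically along $x=1$, along $y=1$, and along $xy=1$ — the last because $(x+a)(\tfrac1x+a)=1+ax+\tfrac ax+a^2=(1+ax)(1+\tfrac ax)$ — so that $(x-1)$, $(y-1)$ and $(xy-1)$ each divide $P$; since $P$ has degree two in $x$ and these factors already account for that degree, the remaining cofactor is a constant, and matching the coefficient of $x^2$ (which equals $a(1-a)\,y(y-1)$ on the left) pins it down as $a(1-a)$.

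Granting the identity, the two cases are immediate. For part (a): when $0<x<1<y$ we have $(x-1)(y-1)<0$ and $a(1-a)>0$, so $P(x,y)>0$ if and only if $xy-1<0$; that is, $E(xy)>E(x)E(y)\iff xy<1$. For part (b): when $0<x,y<1$ we have $(x-1)(y-1)>0$ while $xy-1<0$, so $P(x,y)<0$, i.e.\ $E(xy)<E(x)E(y)$.

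The only genuine work is establishing the factorization of $P$, so that is the ``hard part,'' though it is quite mild: the vanishing-along-three-curves observation makes the identity almost immediate without grinding through the full expansion. (An alternative, slightly less transparent, is to pass to $\log E$ and exploit $E(1/x)=1/E(x)$, but the polynomial identity is the cleanest route.)
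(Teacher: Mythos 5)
Your proposal is correct and follows essentially the same route as the paper: the paper's proof simply asserts that ``a calculation shows'' $E(xy)>E(x)E(y)$ iff $(1-xy)(1-x)(1-y)<0$, and your clearing of denominators together with the factorization $P(x,y)=a(1-a)(x-1)(y-1)(xy-1)$ (which checks out) is precisely that calculation made explicit. The sign analysis in the two regions then matches the paper's conclusion exactly.
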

\bp
	A calculation shows that, since $0<a<1$, $E(xy)>E(x)E(y)$ if and only if
	$(1-xy)(1-x)(1-y)<0$. Parts (a) and (b) follow directly from this.
\ep


%

\medskip

\subsection{The function $\alpha$ and its properties}\label{alfa}
The function $\alpha$ plays a key role in several parts of the arguments. We define  
\beq\label{alp}
	\alpha(q)\equiv \alpha(q,a):=\fa\psi(q)-\ba\psi(q)\qquad q>0.
\eeq
We need to locate the roots of $\alpha$, and these depend sensitively on the value of 
$a=\frac{\gamma-1}{\gamma+1}$. As a first step we introduce 
\beq
	\tilde{\alpha}(q)\equiv \tilde{\alpha}(q,a):=
	\frac{\kappa(q-1)}{\sqrt{q+a}}-\nu\big(q^\zeta-1\big)\,,\qquad q>0\,,
\eeq 
such that 
\beq\label{rel_alphas}
	\alpha(q)=\left\{\begin{array}{rl} 
	\tilde{\alpha}(q)\quad & 0<q<1\\
	-\tilde{\alpha}(q)\quad & q\geq1\,,
	\end{array}\right.
\eeq 
where the constants $\kappa,\nu,\zeta$ are defined in terms of $a$ in Section \ref{bckgr}. 
To analyze $\tilde\alpha$ we introduce the new variable 
\beq\label{z_def}
	z(q)\equiv z(q,a):=\frac{q-1}{\sqrt{q+a}}\,,\quad q>0\,,
\eeq
which satisfies
\beq\label{z_primes} 
	z'(q)=\frac{q+2a+1}{2(q+a)^{\frac{3}{2}}},\qquad
	z''(q)=-\frac{q+4a+3}{4(q+a)^{\frac{5}{2}}}\,.
\eeq
For fixed $a\in(0,1)$ the function $z(q)$ is strictly increasing with a strictly increasing 
inverse  $q(z)$ defined for $z\in(-a^{-1/2},\infty)$, and with $q(0)=1$.
The first two derivatives of the latter are given by
\beq\label{derivs}
	q^{\prime}(z)=\frac{1}{z^\prime(q)}=\frac{2(q+a)^{\frac{3}{2}}}{q+2a+1}\,,\qquad 
	q^{\prime\prime}(z)=-\frac{z^{\prime\prime}(q)}{(z^\prime(q))^3} =\frac{2(q+4a+3)(q+a)^2}{(q+2a+1)^3}\,,
\eeq
where $q=q(z)$. Now, in terms of the variable $z(q)$, we have 
\[\tilde \alpha(q)=\beta(z(q))\,,\]
where 
\beq\label{beeta}
	\beta(z):=\kappa z-\nu\big[q(z)^\zeta-1\big].
\eeq
Thus, in order to locate the zeros of $\tilde\alpha$ we may as well determine the 
zeros of $\beta(z)$ for $z\in (-a^{-1/2},\infty)$, and then translate back to $q$-locations. 
(This turns out to be easier than to determine directly the roots of $\tilde\alpha$.)
We do so by considering the derivatives of $\beta(z)$. 
Differentiating $\beta(z)$ and using \eq{derivs} we have
\beq\label{beeta_prime}
	\beta'(z)=2\nu\zeta\left[\frac{\sqrt{1+a}}{2} - 
	\frac{q^{\zeta-1}(q+a)^\frac{3}{2}}{q+2a+1} \right]
\eeq
and
\[\beta''(z)=\frac{2q^{\zeta-2}(q+a)^2}{\gamma^\frac{3}{2} (q+2a+1)^3}
\cdot (q-1)(q-\bar q),\]
where 
\[ q=q(z)\qquad\text{and}\qquad\bar q:=\frac{2a(2a+1)}{1-a}\,.\]
This shows that the sign of $\beta^{\prime\prime}(z)$ is the same as that of the function
\beq\label{Qz}
	Q(z):=(q(z)-1)(q(z)-\bar q)\,,
\eeq 
such that $Q(z)$ has the zeros $z_1:=0$ and $z_2:=z(\bar q)$. We note that
\beq 
	z_2 \gtreqqless z_1\quad \Leftrightarrow\quad \bar q \gtreqqless 1\quad \Leftrightarrow \quad 
	a\gtreqqless\frac{1}{4} \quad \Leftrightarrow\quad \gamma\gtreqqless\frac{5}{3}.
\eeq 
To locate the zeros of $\beta(z)$ we consider these regimes separately. The
approach is the same in each case: the signs of $\beta''(z)$, together with 
end-point values of $\beta''$, $\beta'$ and $\beta$, determine the number and locations 
of the roots of $\beta$. We therefore only detail the argument in the representative case 
when $1<\gamma<\frac{5}{3}$. 

In this case the function $Q(z)$ has two distinct roots $z_1=0$ and $z_2=z(\bar q)\in(-a^{-1/2},0)$.
>From \eq{Qz} we have that $\beta^{\prime\prime}(z)>0$ when
$z\in(-a^{-1/2},z_2)\cup(0,\infty)$, and $\beta^{\prime\prime}(z)<0$ when
$z\in(z_2,0)$. Hence $\beta^{\prime}(z)$ is increasing on $(-a^{-1/2},z_2)\cup(0,\infty)$
and decreasing in $(z_2,0)$. Next, from \eq{z_def}, \eq{beeta}, and $\zeta<\frac{1}{2}$ 
we obtain
\beq\label{beta_prime_0infty}
	\lim_{z\downarrow{-a^{-1/2}}}\beta^{\prime}(z)=-\infty,
	\quad\beta^{\prime}(0)=0,\quad
	\lim_{z\rightarrow{\infty}}\beta^{\prime}(z)=\nu\zeta\sqrt{1+a}>0.
\eeq
It follows that $\beta^{\prime}(z)>0$ has two roots: one at $z_3\in(-a^{-\frac{1}{2}},z_2)$ and 
one at $z=0$. Also, $\beta'(z)<0$ on $(-a^{-\frac{1}{2}},z_3)$, and $\beta'(z)>0$ on $(z_3,0)\cup(0,\infty)$.

Next, by \eq{z_def} and \eq{beeta} we have $\beta(-a^{-1/2})=\nu-\kappa a^{-1/2}>0$
and $\beta(0)=0$. It follows that $\beta$ itself has exactly two roots: a leftmost root 
$z_4\in (-a^{-1/2},z_3)$ and the other one at $z=0$. Translating back to $q$-variables we 
conclude that $\tilde\alpha(q)$ has exactly two zeros: $q(z_4)=:y_0\in (0,1)$ and $q=1$. 
Finally, it follows from this and \eq{rel_alphas} that the same holds for $\alpha(q)$ itself. 
See Figure 15.

Similar arguments show that
\begin{itemize}
	\item when $\gamma=\frac{5}{3}$ the function $\alpha(q)$ has a single root at $q=1$ 
	(Figure 16), and
	\item when $\gamma>\frac{5}{3}$ the function $\alpha(q)$ has exactly two roots: one at $q=1$
	and one at $y_0\in (1,\infty)$ (Figure 17).
\end{itemize}

\begin{figure}\label{alpha1}
	\centering
	\includegraphics[width=7cm,height=5cm]{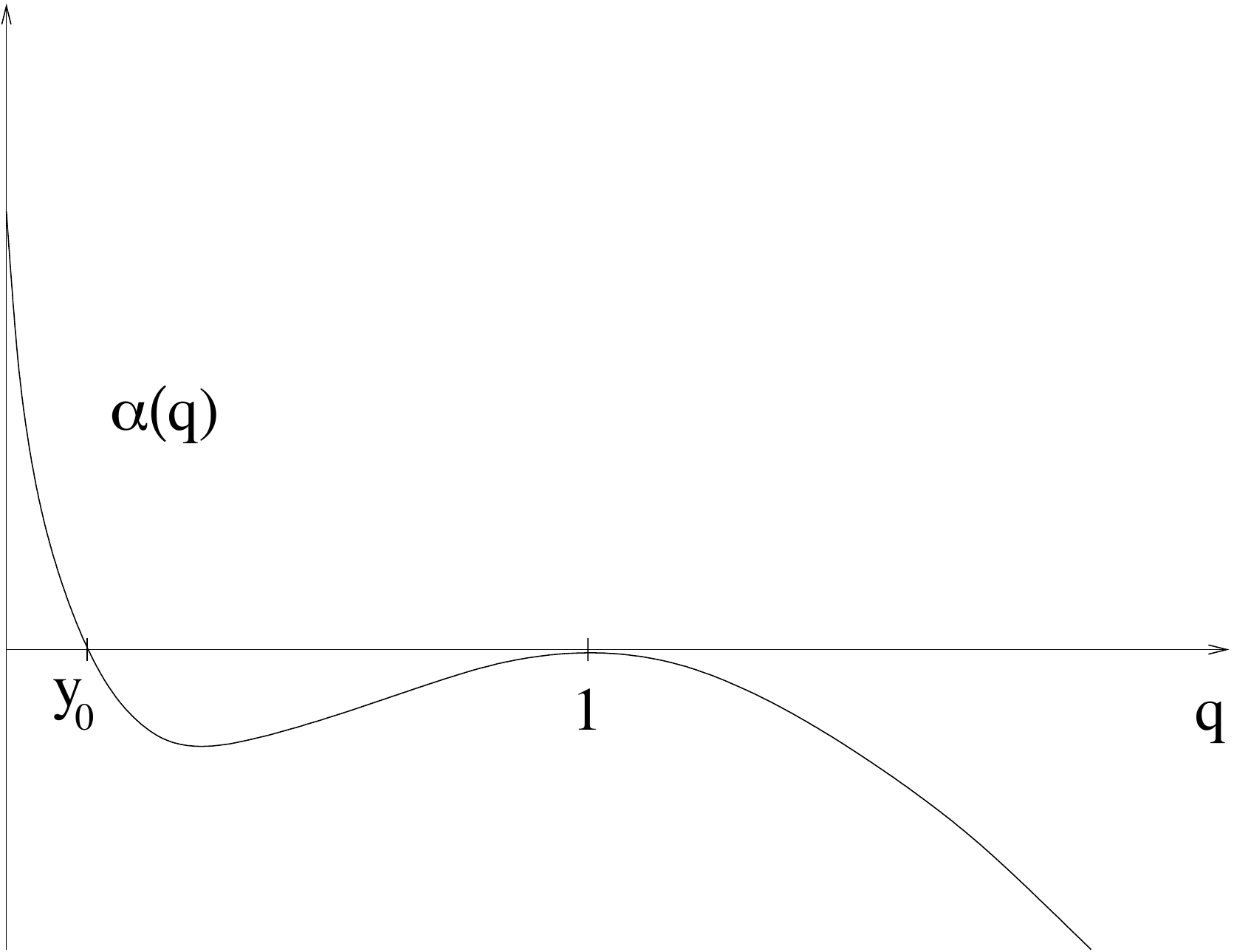}
	\caption{$q\mapsto \alpha(q,a)$ when $0<a<\frac{1}{4}$ ($1<\gamma<\frac{5}{3}$, schematic)}
\end{figure}

\begin{figure}\label{alpha2}
	\centering
	\includegraphics[width=7cm,height=5cm]{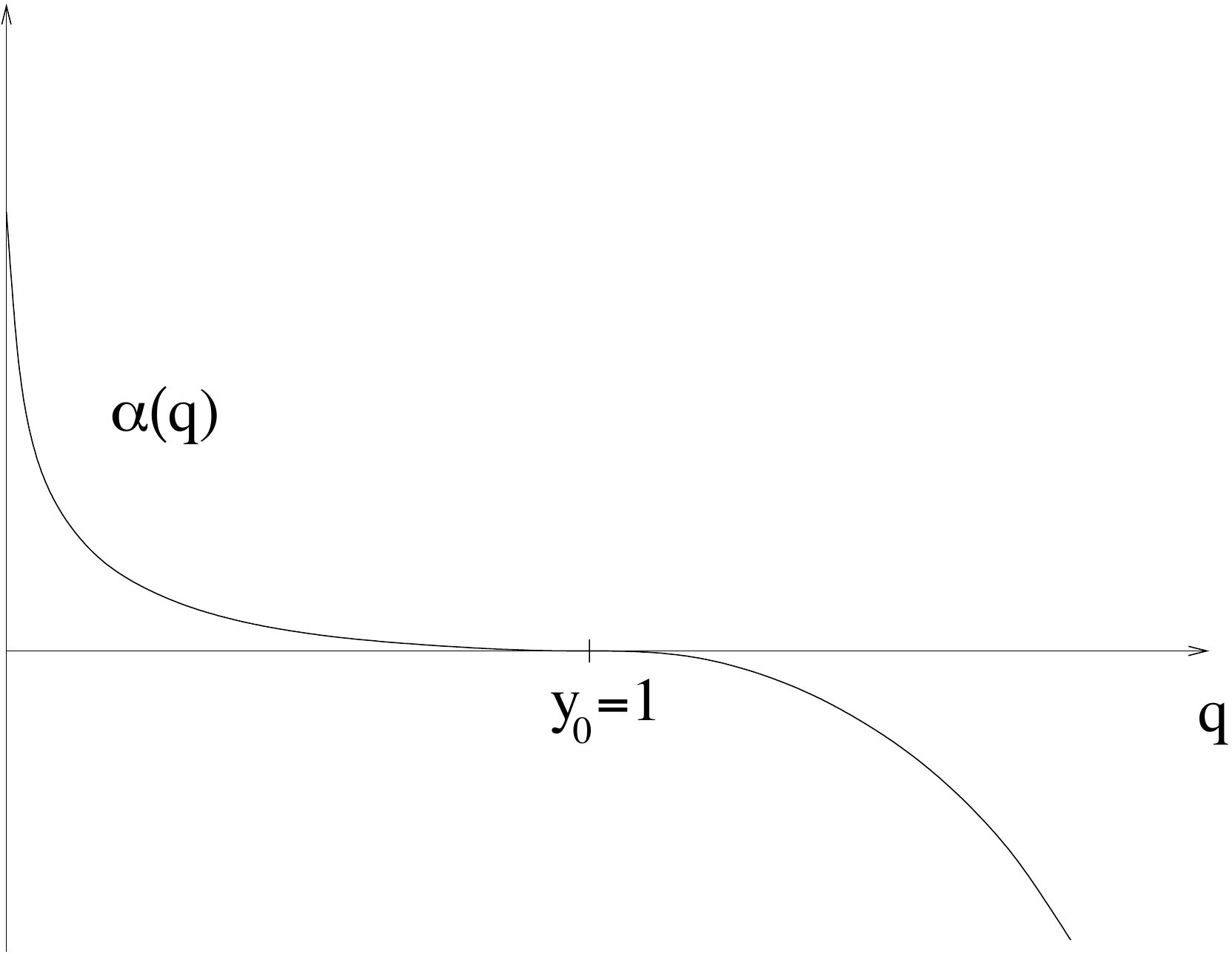}
	\caption{$q\mapsto \alpha(q,\frac{1}{4})$ ($\gamma=\frac{5}{3}$, schematic)}
\end{figure}
%
\begin{figure}\label{alpha3}
	\centering
	\includegraphics[width=7cm,height=5cm]{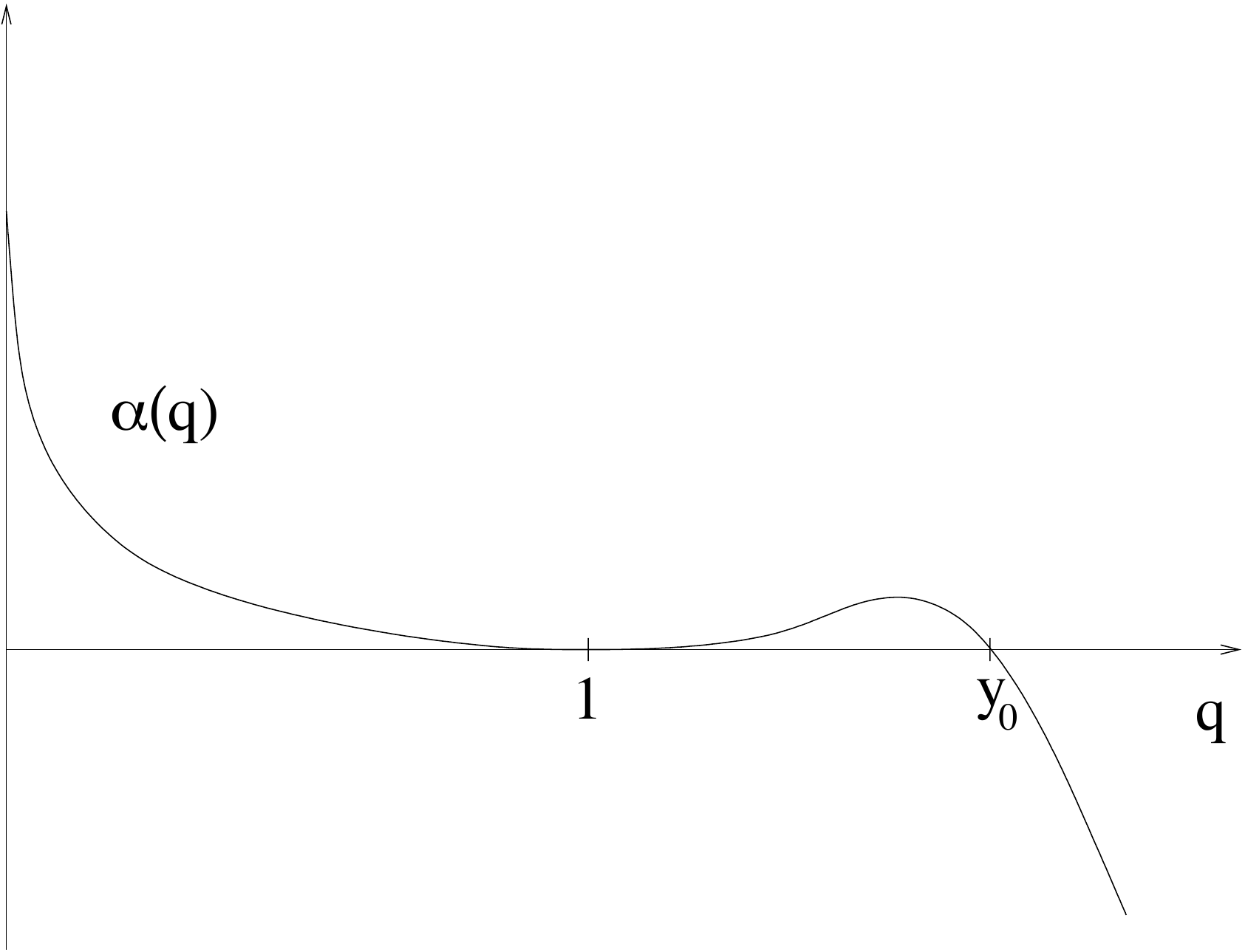}
	\caption{$q\mapsto \alpha(q,a)$ when $a>\frac{1}{4}$ ($\gamma>\frac{5}{3}$, schematic)}
\end{figure}

\medskip

\subsection{The functions $\Gamma$ and $\Lambda$, and their properties}\label{Gamma}
We define the function 
\beq\label{Gamma_def}
	\Gamma(s)\equiv\Gamma(s,a):=\frac{s^\gamma(1-a s)}{s-a}, \qquad
	a<s<\frac{1}{a}\,,
\eeq 
such that the functions $\ba\eta$ and $\fa\eta$ in \eq{bkwd_wave_tau_u_s}-\eq{frwd_wave_tau_u_s}
may be expressed as follows:
\beq\label{eta_fwd_bckwd}
	{\ba{\eta}(l)}= \left\{\begin{array}{ll}
	0 \quad & l>1\\\\
	\log\Gamma(l)\quad & a <l<1
	\end{array}\right\}\qquad
	{\fa{\eta}}(\iota)=
	\left\{\begin{array}{ll}
	0\quad & 0< \iota<1\\\\
	\log\Gamma(\iota)\quad & 1<\iota<\frac{1}{a}
	\end{array}\right\}.
\eeq
The function $\Gamma$ satisfies the relation
\beq\label{Gamma_2}
	\Gamma(s)\Gamma({\textstyle\frac{1}{s}})=1,\qquad a<s<\frac{1}{a}\,.
\eeq
A calculation shows that 
\beq\label{Gamma_diff}
	\Gamma'(s)=-a \gamma s^{\gamma-1}\frac{(s-1)^2}{(s-a)^2}\,,
\eeq
such that $\Gamma$ is a positive and strictly decreasing function on $(a,\frac{1}{a})$,
and satisfies
\beq\label{Gamma_1}
	\Gamma(s)\gtrless 1\qquad\text{for}\qquad s\lessgtr1.
\eeq
\begin{lemma}\label{Gamma_prop}
	The function $\Gamma$ has the following property: for $s$, $t$, $st\in(a,\frac{1}{a})$
	\beq\label{Gamma_convexity}
		\Gamma(s)\Gamma(t)<\Gamma(st)\qquad \text{if and only if}\qquad
		(1-st)(1-s)(1-t)>0.
	\eeq 
\end{lemma}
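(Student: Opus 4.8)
The plan is to reduce the claimed equivalence to one elementary polynomial identity. First I would form the quotient $\Gamma(s)\Gamma(t)/\Gamma(st)$ directly from the definition \eq{Gamma_def}: the factor $s^\gamma t^\gamma/(st)^\gamma$ cancels identically, so that
\[\frac{\Gamma(s)\Gamma(t)}{\Gamma(st)}=\frac{(1-as)(1-at)(st-a)}{(s-a)(t-a)(1-ast)}\,,\]
and in particular all dependence on $\gamma$ survives only through $a$. Next I would record that, under the hypothesis $s,t,st\in(a,\frac1a)$, each of the six linear factors occurring here is strictly positive: $s-a>0$ and $1-as>0$ because $a<s<\frac1a$, likewise for $t$, and $st-a>0$, $1-ast>0$ because $a<st<\frac1a$. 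Hence the displayed quotient is positive, and $\Gamma(s)\Gamma(t)<\Gamma(st)$ holds if and only if
\[D:=(s-a)(t-a)(1-ast)-(1-as)(1-at)(st-a)>0\,.\]

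The core of the argument is then the computation of $D$. Expanding both triple products and collecting terms by monomial type ($1$, $s+t$, $st$, $st(s+t)$, $s^2t^2$), the $st$-terms cancel and one is left with
\[D=a(1+a)\big[\,1-(s+t)+st(s+t)-s^2t^2\,\big]\,.\]
Recognizing $1-s^2t^2=(1-st)(1+st)$ and factoring out $(1-st)$ gives $1-(s+t)+st(s+t)-s^2t^2=(1-st)\big(1+st-s-t\big)=(1-st)(1-s)(1-t)$, so that
\[D=a(1+a)\,(1-st)(1-s)(1-t)\,.\]
Since $a\in(0,1)$ the prefactor $a(1+a)$ is strictly positive, whence $D>0$ if and only if $(1-st)(1-s)(1-t)>0$; combined with the reduction above this is exactly the assertion \eq{Gamma_convexity}.

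I do not expect a genuine obstacle: the only mildly delicate point is the bookkeeping in expanding $D$, and the one thing to get right is the factorization $1-(s+t)+st(s+t)-s^2t^2=(1-st)(1-s)(1-t)$, which is what makes the sign of $D$ transparent. (As a consistency check, the same quotient identity yields \eq{Gamma_2} upon setting $t=\frac1s$: then $st=1$ and the right-hand side equals $1$.)
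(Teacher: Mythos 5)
Your proposal is correct and follows the same route as the paper: cancel the common factor $(st)^\gamma$, reduce to an inequality between the rational parts using positivity of all six linear factors on $(a,\frac1a)$, and verify the resulting polynomial sign condition. The paper compresses the last step into ``a calculation shows''; your explicit expansion $D=a(1+a)(1-st)(1-s)(1-t)$ is exactly that calculation, carried out correctly.
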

\bp
	By using the expression for $\Gamma$ we have that $\Gamma(s)\Gamma(t)<\Gamma(st)$
	if and only if 
	\[\left[\frac{1-as}{s-a}\right]\left[\frac{1-at}{t-a}\right]<\frac{1-ast}{st-a}\,.\]
	Using the assumption that $s$, $t$, and $st$ are all larger than $a$,
	a calculation shows that this holds if and only if $(1-st)(1-s)(1-t)>0$.
\ep
We introduce the auxiliary function 
\beq\label{Lambda_def}
	\Lambda(z):=\ba\xi(\Omega^{-1}(z)),\qquad z\in(1,\infty),
\eeq 
where $\ba\xi$ is defined in Section \ref{curves} and 
\[\Omega(s):=\sqrt{\Gamma(s)},\qquad s\in(a,1).\] 
\begin{lemma}\label{Lambda_prop}
The function $\Lambda:(1,\infty)\to (0,\infty)$ is increasing and has the property that  
\beq\label{2_2}
	\Lambda(z_1)+z_1\Lambda(z_2) \geq \Lambda(z_1z_2)
	\qquad\text{for all} \quad z_1,\, z_2\geq 1\,.
\eeq 
\end{lemma}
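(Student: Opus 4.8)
The plan is to show that $\Lambda$ is an increasing \emph{concave} bijection of $(1,\infty)$ onto $(0,\infty)$, and then to extract \eqref{2_2} from concavity by a one–variable monotonicity argument. First I would record the soft structure. By \eqref{Gamma_diff} the function $\Omega(s)=\sqrt{\Gamma(s)}$ is $C^2$ and strictly decreasing on $(a,1)$, with $\Omega(1^-)=1$ and $\Omega(a^+)=+\infty$ (since $\Gamma(a^+)=+\infty$, $\Gamma(1)=1$); hence $\Omega^{-1}:(1,\infty)\to(a,1)$ is $C^2$, strictly decreasing, and bijective. Since $\ba\xi$ (Section~\ref{curves}) is $C^2$, positive and strictly decreasing on $(a,1)$ with $\ba\xi(1^-)=0$ and $\ba\xi(a^+)=+\infty$, the composition $\Lambda=\ba\xi\circ\Omega^{-1}$ is $C^2$, strictly increasing, and maps $(1,\infty)$ onto $(0,\infty)$. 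We extend $\Lambda$ continuously to $[1,\infty)$ by setting $\Lambda(1):=0$, which is consistent with $\lim_{z\downarrow1}\Lambda(z)=\ba\xi(1^-)=0$.

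The crux is the claim that $\Lambda$ is concave on $(1,\infty)$. Parametrizing $z=\Omega(s)$ and differentiating the identity $\Lambda(\Omega(s))=\ba\xi(s)$ gives $\Lambda'(\Omega(s))=\ba\xi'(s)/\Omega'(s)=:R(s)$; since $z\mapsto\Omega^{-1}(z)$ is decreasing, $\Lambda$ is concave precisely when $R$ is non-decreasing on $(a,1)$. Using the explicit formula $\ba\xi(s)=\sqrt{1+a}\,(1-s)/\sqrt{s-a}$ together with \eqref{Gamma_diff} one computes
\[
R(s)=\frac{\sqrt{1+a}}{a\gamma}\cdot\frac{(s+1-2a)\sqrt{1-as}}{s^{\gamma/2-1}(1-s)^2},
\]
so that, using $\gamma-2=\tfrac{3a-1}{1-a}$,
\[
\frac{d}{ds}\log R(s)=\frac{1}{s+1-2a}+\Bigl(\frac{1}{1-s}-\frac{a}{2(1-as)}\Bigr)+\Bigl(\frac{1}{1-s}-\frac{3a-1}{2s(1-a)}\Bigr).
\]
On $(a,1)$ each group is non-negative: the first term is positive since $s+1-2a>1-a>0$; the first bracket is positive since $1-as>1-s$ and $\tfrac a2<1$, so $\tfrac{1}{1-s}>\tfrac{1}{1-as}>\tfrac{a}{2(1-as)}$; and the second bracket is non-negative because (when $3a>1$) $\tfrac{1}{1-s}>\tfrac{1}{1-a}$ while $\tfrac{3a-1}{2s(1-a)}<\tfrac{3a-1}{2a(1-a)}\le\tfrac{1}{1-a}$, the last step being $2a\ge 3a-1$. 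Hence $R'>0$, so $\Lambda$ is strictly concave. I expect this computation — recognizing that concavity of $\Lambda$ is the right intermediate statement and then pinning down the sign of $(\log R)'$ — to be the main point of the proof; once the concavity reduction is made, it collapses to the three elementary termwise comparisons above.

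Finally I would deduce \eqref{2_2}. Fix $z_1\ge1$ and set $\phi(z):=\Lambda(z_1)+z_1\Lambda(z)-\Lambda(z_1z)$ for $z\ge1$. Then $\phi$ is continuous on $[1,\infty)$ with $\phi(1)=\Lambda(z_1)+z_1\Lambda(1)-\Lambda(z_1)=0$, and for $z>1$ (where $\Lambda'$ is finite)
\[
\phi'(z)=z_1\bigl(\Lambda'(z)-\Lambda'(z_1z)\bigr)\ge 0,
\]
since $z_1z\ge z$ and $\Lambda'$ is non-increasing by concavity. Therefore $\phi$ is non-decreasing on $[1,\infty)$, so $\phi(z)\ge\phi(1)=0$ for all $z\ge1$; taking $z=z_2$ gives $\Lambda(z_1)+z_1\Lambda(z_2)\ge\Lambda(z_1z_2)$. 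Together with the monotonicity of $\Lambda$ and the identification of its range noted in the first paragraph, this completes the proof.
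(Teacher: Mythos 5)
Your proof is correct, and its overall architecture is the same as the paper's: reduce \eqref{2_2} to concavity of $\Lambda$ via the function $\phi(z)=\Lambda(z_1)+z_1\Lambda(z)-\Lambda(z_1z)$ (the paper phrases this as integrating $\Lambda'(z)\ge\Lambda'(z_1z)$ from $1$ to $z_2$), then compute $\Lambda'$ through the parametrization $z=\Omega(s)$ and show that $s\mapsto\ba\xi'(s)/\Omega'(s)$ is increasing on $(a,1)$. Where you genuinely diverge is in the final verification. The paper differentiates the quotient directly and reduces the sign condition to the inequality $\mathcal P(a,s)<0$ for an explicit cubic $\mathcal P(a,s)=2a^2s^3-(10a^2-5a+1)s^2+(8a^3-10a^2+14a-6)s-(6a^2-5a+1)$, which it then verifies by an iterated-derivative argument in the parameter $a$ (checking $\del_a^3\mathcal P>0$, $\del_a^2\mathcal P(1,s)<0$, $\del_a\mathcal P(s,s)>0$, $\mathcal P(s,s)<0$). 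You instead take the logarithmic derivative of $R$, use the identity $\gamma-2=\frac{3a-1}{1-a}$ to absorb the $s^{\gamma/2-1}$ factor, and split $(\log R)'$ into three groups each of which is non-negative by elementary termwise comparisons ($s+1-2a>1-a$, $1-as>1-s$, and $3a-1\le 2a$). Your route avoids the polynomial bookkeeping entirely and makes the positivity transparent; the paper's route is more mechanical but requires the clever choice of differentiating in $a$ rather than $s$. Both are complete; I verified your three termwise bounds and the identity $\gamma-2=\frac{3a-1}{1-a}$, and they hold on the full range $0<a<1$, $a<s<1$.
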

\bp
First, since $\Lambda(1)=0$, \eq{2_2} follows from
\beq\label{2_3}
	\Lambda^{\prime}(z)\geq \Lambda^\prime(z_1 z), \quad z\in(1,\infty)
\eeq
by integration from $z=1$ to $z=z_2$. In turn, \eq{2_3} follows if we show
that $\Lambda$ is a concave:
\beq\label{concave}
	\Lambda^{\prime\prime}(z)<0, \qquad z\in(1,\infty).
\eeq
To establish \eq{concave} we first calculate $\Lambda'(z)$:
\[\Lambda^{\prime}(z)= \frac{\ba\xi {'}({s})}{\Omega'({s})}
=\frac{\sqrt{1+a}}{a\gamma}\cdot\frac{\sqrt{1-as}(s+1-2a)}{s^{\frac{\gamma}{2}-1}(s-1)^2}
\qquad \text{where}\quad s=\Omega^{-1}(z)\in(a,1)\,.\]
Since $\Omega'(s)<0$ we see that \eq{concave} holds if and only if 
\[\frac{d}{ds}\left[\frac{\sqrt{1-as}(s+1-2a)}{s^{\frac{\gamma}{2}-1}(s-1)^2}\right]>0
\qquad\text{for}\quad s\in(a,1).\]
A calculation shows that this is the case if and only if the polynomial
\[\mathcal{P}(a,s):= 2 a^2 s^3-(10 a^2-5a+1)s^2+(8a^3-10a^2+14a-6)s -(6a^2-5a+1)\]
satisfies
\beq\label{P}
	\mathcal{P}(a,s)<0 \qquad\text{whenever}\quad 0<a<1\quad\text{and}\quad a<s<1\,.
\eeq
To verify \eq{P} we fix $s\in (0,1)$ and consider the map $a\mapsto\mathcal P(a,s)$ 
for $0<a<1$. Since 
\[\del_a^3\mathcal P(a,s)=48s>0, \qquad \del_a^2\mathcal P(1,s)=4(s-1)^2(s-3)<0,\]
while
\[\del_a\mathcal P(s,s)=(s-1)^2(4s^2+12s+5)>0\qquad\text{and}\qquad 
\mathcal P(s,s)=(s-1)^3(2s^2+4s+1)<0\,,\]
it follows that \eq{P} indeed holds. 
\ep

\medskip

\subsection{The function $H(x,y)$ in region IIIa}\label{H_in_IIIa}
Consider the function $H$ defined in \eq{HH}. For $(x,y)\in$ IIIa $=\{x>1,\, y>1\}$ we
use the explicit expressions for $\ba\psi$ and $M$ to get
\[H(x,y)=\kappa\left[\frac{(xy-1)}{\sqrt{xy+a}} - \frac{(x-1)}{\sqrt{x+a}} 
- \frac{(y-1)}{\sqrt{y+a}}\cdot\sqrt{\frac{x+ax^2}{x+a}}\right]\qquad (x,\, y>1)\,. \]
Thus
\beann
	&& \nqquad H(x,y) \gtrless 0  \quad\Leftrightarrow\quad 
	\frac{(xy-1)}{\sqrt{xy+a}} \gtrless  \frac{(x-1)}{\sqrt{x+a}} 
	+ \frac{(y-1)}{\sqrt{y+a}}\cdot\sqrt{\frac{x+ax^2}{x+a}} \\
	&\Leftrightarrow& \quad (xy-1)\sqrt{x+a}\sqrt{y+a} 
	\gtrless  (x-1)\sqrt{y+a}\sqrt{xy+a} +  (y-1)\sqrt{xy+a}\sqrt{x+ax^2}\,.
\eeann
For $x,\, y>1$ this holds if and only if the same inequality with squared RHS and LHS 
holds. After squaring each side, collecting positive and negative terms, canceling the common factor $\sqrt{x}(x-1)(y-1)$, 
and rearranging, we obtain:
\[ H(x,y) \gtrless 0 \quad\Leftrightarrow\quad 2(xy+a)\sqrt{y+a}\sqrt{1+ax}
 \lessgtr  \sqrt{x}\left[ (1-a)xy^2+2a(xy+a)+(3a+1)y \right].\]
Again each side is positive; squaring, collecting terms, canceling the common factor $(1-xy)^2$, and rearranging,
finally yield
\beq\label{H_pos}
	H(x,y) \gtrless 0 \quad \Leftrightarrow \quad 
	xy\left[(1-a)^2y-4a^2\right]\gtrless 4a^2(y+a)\qquad\text{when $x,\, y>1$.}
\eeq
In particular, if $0<a\leq \frac{1}{4}$ then $(1-a)^2y-4a^2>4a^2(y+a)$ whenever $y>1$. Thus
\beq\label{H_crit}
	H(x,y)>0\qquad\text{whenever $x,\, y>1$ and $0<a\leq \textstyle\frac{1}{4}$.}
\eeq

\medskip

\subsection{The function $H(x,y)$ in region IIIb2}\label{del_H}
Consider the function $H$ defined in \eq{HH}. 
We want to determine the its zeros in the subregion IIIb2 $:=\{0<y<1<xy\}$. 
>From \eq{HH} and the explicit expressions for $\ba\psi$ and $M$, we have 
\[H(x,y)=\frac{\kappa(xy-1)}{\sqrt{xy+a}}-\frac{\kappa(x-1)}{\sqrt{x+a}}
-\nu(y^\zeta-1)\sqrt{\frac{x+ax^2}{x+a}}\qquad\text{for $\frac{1}{x}<y<1$.}\]
Fixing $x>1$ we first note that
\beq\label{H(x,1/x)}
	H\big(x,\textstyle\frac{1}{x}\big)=M(x)\alpha\big(\textstyle\frac{1}{x},a\big)
	\quad\text{while}\quad H(x,1)\equiv 0,\quad\forall x>0\,.
\eeq
We study $y\mapsto H(x,y)$ as $y$ increases from $\frac{1}{x}$ to $1$
by analyzing $y\mapsto \del_y H(x,y)$. As we shall see, its 
behavior depends on whether $0<a\leq \frac{1}{4}$, $\frac{1}{4}<a<\frac{1}{3}$, 
or $a\geq \frac{1}{3}$.
Introducing the functions $\theta(z,a)$ and $Q(z,x,a)$ by
\beq\label{theta_def}
	\theta (z,a):=
	\frac{(z+2a+1)z^{1-\zeta}}{(z+a)^\frac{3}{2}}\qquad\text{for $z>1$,}
\eeq
\beq\label{q_def}
	Q(z,x,a):= \frac{\sqrt{1+a}}{2}\cdot\frac{x^\zeta}{M(x)}\cdot\theta (z,a)-1\qquad\text{for $x,\, z>1$,}
\eeq
we have
\beq\label{del_theta1}
	\del_y H(x,y)=\nu\zeta M(x)y^{\zeta-1}Q(xy,x,a) \qquad\text{for $(x,y)\in$ IIIb2.}
\eeq
%
%
We next observe the following points:
\begin{itemize}
	\item [(A1)] By \eq{del_theta1} the sign of $\del_y H(x,y)$ is the same as that of $Q(z,x,a)|_{z=xy}$. 
	\item [(A2)] According to Lemma \ref{extra1},
		\beq\label{del_theta2}
			Q(1,x,a)=\frac{x^\zeta}{M(x)}-1<0\,,
		\eeq
		such that, in IIIb2, $y\mapsto H(x,y)$ ``starts out" decreasing at $y=\frac{1}{x}$.
	\item [(A3)] A calculation shows that $Q(x,x,a)> 0$ for $x>1$ if and only if 
		\beq\label{del_theta3}
			(1-3a)(x-\bar x)>0\qquad \text{where}\quad \bar x=\frac{4a^2}{1-3a}\,.
		\eeq
	\item [(A4)] We have $\bar x<0$ for $a>\frac{1}{3}$, while
		\[0<\bar x\leq 1 \quad\Leftrightarrow\quad 0<a\leq \textstyle\frac{1}{4}\,,\qquad
		\bar x> 1 \quad\Leftrightarrow\quad a\in\big(\frac{1}{4},\frac{1}{3}\big)\,.\]
		For $a=\frac{1}{3}$, $\bar x$ is undefined and $Q(x,x,\textstyle\frac{1}{3})<0$.
	\item [(A5)] By \eq{q_def}, the sign of $\del_z Q(z,x,a)$ coincides with that of $\del_z\theta (z,a)$. 
		A calculation shows that the latter is given by
		\beq\label{del_theta4}
			\del_z\theta (z,a)=\frac{(1-a)(z-1)(z-\hat z)}{2(1+a)z^\zeta(z+a)^\frac{5}{2}}\,,\qquad
			\text{where}\quad\hat z:=\frac{2a(2a+1)}{1-a}\,.
		\eeq
		We have
		\[\hat z\gtrless 1\qquad\Leftrightarrow\qquad a\gtrless \frac{1}{4}\,.\]
\end{itemize}
We can now analyze the zeros of $y\mapsto H(x,y)$ for $\frac{1}{x}<y<1$:
\begin{itemize}
	\item $0<a\leq\frac{1}{4}$. By (A2)-(A5) we have 
	\[\qquad Q(1,x,a)<0\quad\text{and}\quad Q(x,x,a)>0,\quad\text{while}\quad\del_z Q(z,x,a)>0\quad
	\text{for $1<z<x$.}\]
	It follows from this and \eq{del_theta1} that $y\mapsto \del_y H(x,y)$ changes sign once from negative
	to positive as $y$ increases from $\frac{1}{x}$ to $1$. From \eq{H(x,1/x)} and  the properties of the map $\alpha$,
	we have $H(x,\frac{1}{x})\gtrless 0$ according to $x\gtrless x_0$. (Here $x_0=\frac{1}{y_0}$, $y_0$ 
	being the unique root of $\alpha(\cdot,a)$ different from $1$ when 
	$a<\frac{1}{4}$, and $y_0=1$ when $a=\frac{1}{4}$.) 
	We conclude from \eq{H(x,1/x)} that when $1<\gamma<\frac{5}{3}$ then the map $y\mapsto H(x,y)$ has 
	\bea
		&\bullet&\text{no root in $(\textstyle\frac{1}{x},1)$ when $x\leq x_0$,}\label{1st}\\
		&\bullet&\text{exactly one root $y=h_2(x)\in(\textstyle\frac{1}{x},1)$ when $x>x_0$.}\label{2nd}
	\eea
	As $H$ is a $C^2$-map it follows that $h_2(x)$ is a $C^2$-function. 
	\item $a>\frac{1}{4}$. By \eq{H(x,1/x)} and the properties of $\alpha(y,a)$ we have 
	$H(x,\frac{1}{x})>0=H(x,1)$ for $x>1$. As above we have $Q(1,x,a)<0$, but now $\hat z>1$ (by (A5)) and 
	\[\del_z Q(z,x,a)<0 \quad\text{for $z\in(1,\hat z)$,}\quad \del_z Q(z,x,a)>0  \quad\text{for $z\in(\hat z,\infty),$}\]
	and $Q(z,x,a)\to\infty$ as $z\uparrow \infty$ (by \eq{q_def}, \eq{theta_def}, and the fact that $\zeta<\frac{1}{2}$).
	Thus, the map $z\mapsto Q(z,x,a)$ has a unique zero in $(\hat z,\infty)$ whenever $a>\frac{1}{4}$.
	By \eq{del_theta1} this root corresponds to a zero of $y\mapsto \del_yH(x,y)$ in  the interval $(\frac{1}{x},1)$ if and only 
	if $Q(x,x,a)>0$. By (A4) and \eq{del_theta3} this is the case if and only if $a<\frac{1}{3}$ and $x>\bar x$.
	We conclude that, for each fixed $x>1$ the map $y\mapsto H(x,y)$ has: 
	\bea
		&\bullet& \text{a unique root $y=h_2(x)\in(\textstyle\frac{1}{x},1)$ if and only if $x>\bar x$, 
		when $\textstyle\frac{1}{4}<a<\frac{1}{3}$},\label{3rd}\\
		&\bullet& \text{no root in $\textstyle(\frac{1}{x},1)$ when $a\geq \textstyle\frac{1}{3}$.}\label{4th}
	\eea
	Again, in the former case $h_2$ is $C^2$-smooth.
\end{itemize}

\medskip

\subsection{The function $H(x,y)$ in region IIIc}\label{eta}
Consider the function $H$ defined in \eq{HH}. In IIIc$:=\{0<x<1<y\}$ we analyze how $H(x,y)$
varies along hyperbolas $xy=conts.$ Again, by using the explicit expressions for $\ba\psi$ and $M$
we have
\beq\label{eta_prop}
	(-x,y)\cdot \nabla_{(x,y)} H(x,y) = {\textstyle\frac{\kappa}{2}} x^\zeta \eta(y,a)
	\qquad\text{for $0<x<1$ and $y>1$,}
\eeq
where the function $\eta$ is given by
\beq\label{eta_def}
	\eta(y,a):=\frac{2}{\sqrt{1+a}} - \frac{(1-a)y^2+(5a+1)y+2a^2}{(1+a)(y+a)^\frac{3}{2}}
	\qquad\text{for $y\geq 1$.}
\eeq
A calculation shows that $\eta(y,a)$ factors as follows:
\[\eta(y,a)=-\frac{(1-a)\left(\sqrt{y+a}-\sqrt{1+a}\right)^2}{(1+a)(y+a)^\frac{3}{2}}
\left[\sqrt{y+a}-\frac{\sqrt{a(1+a)}}{1-\sqrt{a}}\right]\!\!
\left[\sqrt{y+a}+\frac{\sqrt{a(1+a)}}{1+\sqrt{a}}\right].\]
It follows from this and \eq{eta_prop} that the directional derivative $(-x,y)\cdot \nabla_{(x,y)} H(x,y)$ 
in IIIc has the same sign as 
\[\frac{\sqrt{a(1+a)}}{1-\sqrt{a}}-\sqrt{y+a}\,,\]
which is strictly decreasing in $y$. In particular, since 
\[\left.\left[\frac{\sqrt{a(1+a)}}{1-\sqrt{a}}-\sqrt{y+a}\right]\right|_{y=1}\leq 0 \qquad\text{if and only if}\qquad a\leq \textstyle\frac{1}{4}\,,\]
and since $H(1,y)=H(x,1)\equiv 0$, we obtain the following:
\begin{lemma}\label{IIIc_H_a<1/4}
	For $a\leq \textstyle\frac{1}{4}$ the function $H(x,y)$ is strictly negative in $\mathrm{IIIc}=\{0<x<1<y\}$.
\end{lemma}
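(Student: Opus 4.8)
The plan is to read off the conclusion from the two facts already established in this subsection: that $H$ vanishes identically on the lines $\{x=1\}$ and $\{y=1\}$, and that for $0<x<1$, $y>1$ one has
$(-x,y)\cdot\nabla_{(x,y)}H(x,y)=\frac{\kappa}{2}x^\zeta\eta(y,a)$, a quantity whose sign equals that of $\frac{\sqrt{a(1+a)}}{1-\sqrt a}-\sqrt{y+a}$. First I would pin down the sign: for $a\le\frac14$ we have $\frac{\sqrt{a(1+a)}}{1-\sqrt a}\le\sqrt{1+a}$, hence for all $y\ge1$
\[
\frac{\sqrt{a(1+a)}}{1-\sqrt a}-\sqrt{y+a}\le\sqrt{1+a}-\sqrt{y+a}\le0,
\]
with strict inequality when $y>1$. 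Since $\kappa>0$ and $x^\zeta>0$, this gives $(-x,y)\cdot\nabla_{(x,y)}H(x,y)<0$ for all $0<x<1$, $y>1$. Parametrizing a hyperbola $\{xy=c\}$ as $t\mapsto(xe^{-t},ye^{t})$ (so that the $t$-derivative of $H$ is exactly the above directional derivative), this says: moving along $\{xy=c\}$ in the direction of increasing $y$ strictly decreases $H$, as long as one stays in the half-plane $\{y>1\}$.

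Next I would fix an arbitrary $(x_0,y_0)\in\mathrm{IIIc}$, set $c:=x_0y_0>0$, and anchor the walk along $\{xy=c\}$ on the boundary of $\mathrm{IIIc}$, where $H\equiv0$. If $c\ge1$, the point $(1,c)$ lies on $\{xy=c\}\cap\{x=1\}$ and $H(1,c)=0$; increasing $y$ from $c$ to $y_0$ along $\{xy=c\}$ (note $y_0=c/x_0\ge c\ge1$ because $x_0<1$) stays in $\{y\ge1\}$ and reaches $(x_0,y_0)$, so $H(x_0,y_0)<H(1,c)=0$. If $c\le1$, the point $(c,1)$ lies on $\{xy=c\}\cap\{y=1\}$ and $H(c,1)=0$; increasing $y$ from $1$ to $y_0$ along $\{xy=c\}$ moves $x$ down from $c$ to $x_0=c/y_0<c$, stays in $\{y\ge1\}$, and reaches $(x_0,y_0)$, so $H(x_0,y_0)<H(c,1)=0$. (When $c=1$ both anchors coincide with $(1,1)$.) In all cases $H(x_0,y_0)<0$, which is the assertion of the lemma.

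I do not anticipate a real obstacle: essentially all the work is the sign analysis of $\eta$ — equivalently, of the single expression $\frac{\sqrt{a(1+a)}}{1-\sqrt a}-\sqrt{y+a}$ — already performed above, combined with the boundary values $H(1,y)=H(x,1)\equiv0$. The only point requiring a moment's attention is the case split $xy\gtrless1$, which decides whether to anchor the hyperbola walk on $\{x=1\}$ or on $\{y=1\}$; in both cases the traversed arc lies in $\{y>1\}$ by virtue of $x_0<1<y_0$, so the directional derivative has a definite sign there. As a consistency check, on $\mathrm{IIIc}\cap\{xy\le1\}$ the definition \eq{HH} evaluates (using the explicit forms of $\ba\psi$ and $M$) to $H(x,y)=x^\zeta\alpha(y,a)$, and $\alpha(\cdot,a)<0$ on $(1,\infty)$ when $\gamma\le\frac53$ by Section \ref{alfa}, in agreement with the claim.
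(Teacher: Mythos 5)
Your argument is correct and is essentially the paper's own proof: the paper likewise establishes that the directional derivative $(-x,y)\cdot\nabla H$ along hyperbolas $\{xy=\mathrm{const}\}$ has the sign of $\frac{\sqrt{a(1+a)}}{1-\sqrt a}-\sqrt{y+a}$, notes this is $\leq 0$ at $y=1$ precisely when $a\leq\frac14$ (and decreasing in $y$), and combines this with $H(1,y)=H(x,1)\equiv 0$. You merely make explicit the anchoring of the hyperbola walk on $\{x=1\}$ versus $\{y=1\}$ according to $xy\gtrless 1$, which the paper leaves implicit.
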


\medskip
{\bf{Acknowledgement.}}
Geng Chen would like to thank Robin Young for helpful discussions.

\begin{bibdiv}
\begin{biblist}
\bib{ch}{book}{
   author={Chang, Tung},
   author={Hsiao, Ling},
   title={The Riemann problem and interaction of waves in gas dynamics},
   series={Pitman Monographs and Surveys in Pure and Applied Mathematics},
   volume={41},
   publisher={Longman Scientific \& Technical},
   place={Harlow},
   date={1989},
   pages={x+272},
   isbn={0-582-01378-X},
   review={\MR{994414 (90m:35122)}},
}
\bib{coufr}{book}{
   author={Courant, R.},
   author={Friedrichs, K. O.},
   title={Supersonic flow and shock waves},
   note={Reprinting of the 1948 original;
   Applied Mathematical Sciences, Vol. 21},
   publisher={Springer-Verlag},
   place={New York},
   date={1976},
   pages={xvi+464},
   review={\MR{0421279 (54 \#9284)}},
}
\bib{gl}{article}{
   author={Glimm, James},
   title={Solutions in the large for nonlinear hyperbolic systems of
   equations},
   journal={Comm. Pure Appl. Math.},
   volume={18},
   date={1965},
   pages={697--715},
   issn={0010-3640},
   review={\MR{0194770 (33 \#2976)}},
}
\bib{gr}{article}{
   author={Greenberg, James M.},
   title={On the interaction of shocks and simple waves of the same family},
   journal={Arch. Rational Mech. Anal.},
   volume={37},
   date={1970},
   pages={136--160},
   issn={0003-9527},
   review={\MR{0274957 (43 \#715)}},
}
\bib{had}{book}{
   author={Hadamard, Jacques},
   title={Le\c cons sur la propagation des ondes et les \'equations de l'hydrodynamique},
   publisher={Chelsea},
   place={New York},
   date={1949},
}
\bib{jc}{collection}{
   title={Classic papers in shock compression science},
   series={High-pressure Shock Compression of Condensed Matter},
   editor={Johnson, James N.},
   editor={Ch{\'e}ret, Roger},
   publisher={Springer-Verlag},
   place={New York},
   date={1998},
   pages={xii+524},
   isbn={0-387-98410-0},
   review={\MR{1625293 (99m:76076)}},
}
\bib{jou}{book}{
   author={Jouguet, {\'E}mile},
   title={M\'ecanique des Explosifs},
   language={French},
   publisher={O.\ Doin et Fils},
   place={Paris},
   date={1917},
}
\bib{ll}{book}{
   author={Landau, L. D.},
   author={Lifshitz, E. M.},
   title={Course of theoretical physics. Vol. 6},
   edition={2},
   note={Fluid mechanics;
   Translated from the third Russian edition by J. B. Sykes and W. H. Reid},
   publisher={Pergamon Press},
   place={Oxford},
   date={1987},
   pages={xiv+539},
   isbn={0-08-033933-6},
   isbn={0-08-033932-8},
   review={\MR{961259 (89i:00006)}},
}
\bib{vneu}{book}{
   author={von Neumann, John},
   title={Collected works. Vol. VI: Theory of games, astrophysics,
   hydrodynamics and meteorology},
   series={General editor: A. H. Taub. A Pergamon Press Book},
   publisher={The Macmillan Co.},
   place={New York},
   date={1963},
   pages={x+538 pp. (1 plate)},
   review={\MR{0157876 (28 \#1105)}},
}
\bib{smol}{book}{
   author={Smoller, Joel},
   title={Shock waves and reaction-diffusion equations},
   series={Grundlehren der Mathematischen Wissenschaften [Fundamental
   Principles of Mathematical Sciences]},
   volume={258},
   edition={2},
   publisher={Springer-Verlag},
   place={New York},
   date={1994},
   pages={xxiv+632},
   isbn={0-387-94259-9},
   review={\MR{1301779 (95g:35002)}},
}
\bib{ri}{article}{
   author={Riemann, Bernhard},
   title={Ueber die Fortpflanzung ebener Luftwellen von endlicher Schwingungsweite},
   language={German},
   journal={Abhandlungen der Gesellschaft der Wissenschaften zu G\"ottingen},
   volume={8},
   date={1860},
   pages={43--65},
}
\bib{rj}{book}{
   author={Ro{\v{z}}destvenski{\u\i}, B. L.},
   author={Janenko, N. N.},
   title={Systems of quasilinear equations and their applications to gas
   dynamics},
   series={Translations of Mathematical Monographs},
   volume={55},
   note={Translated from the second Russian edition by J. R. Schulenberger},
   publisher={American Mathematical Society},
   place={Providence, RI},
   date={1983},
   pages={xx+676},
   isbn={0-8218-4509-8},
   review={\MR{694243 (85f:35127)}},
}
\end{biblist}
\end{bibdiv}

\end{document}